\definecolor{darkblue}{rgb}{0.0,0.0,0.7}
\newcommand{\T}{^{\top}}%
\newcommand{\prodsca}[2]{\langle #1,#2 \rangle}%
\newcommand{\norm}[1]{\|#1\|}%
\newcommand{\ind}[1]{\mathbf 1_{#1}}%
\newcommand{\E}{\mathsf E}
\renewcommand{\L}{\mathbb L}
\renewcommand{\P}{\mathsf P}
\renewcommand{\d}{{\rm d}}
\DeclareMathOperator{\Span}{span}
\newcommand{\1}{{\rm 1}\kern-0.24em{\rm I}}
\newtheorem{theorem}{Theorem}%
\newtheorem{corollary}{Corollary}%
\newtheorem{lemma}{Lemma}%
\newtheorem{proposition}{Proposition}%
\theoremstyle{assumption}%
\newtheorem{assumption}{Assumption}{\bf}{\rm}%
\theoremstyle{remark}%
\newtheorem{remark}{Remark}%
\newtheorem{definition}{Definition}%
\newcommand{\prodi}{{\textrm {\Huge $\pi$}}}
\newcommand{\bs}{\boldsymbol}
\begin{document}

\title{Learning and adaptive estimation for marker-dependent counting
  processes}

\author{St\'ephane Ga\"iffas${}^{1}$ \and Agathe Guilloux${}^{1,2}$}

\footnotetext[1]{Universit\'e Pierre et Marie Curie - Paris~6,
  Laboratoire de Statistique Th\'eorique et Appliqu\'ee, 175 rue du
  Chevaleret, 75013 PARIS. Supported in part by ANR Grant
  \textsc{``Prognostic''} \\ \emph{Email}:
  \texttt{stephane.gaiffas@upmc.fr}}

\footnotetext[2]{Universit\'e Pierre et Marie Curie - Paris~6, Unité
  INSERM 762 "Instabilité des Microsatellites et Cancers"  \\
  \emph{Email}: \texttt{agathe.guilloux@upmc.fr}}

\maketitle

\begin{abstract}
  We consider the problem of statistical learning for the intensity of
  a counting process with covariates. In this context, we introduce an
  empirical risk, and prove risk bounds for the corresponding
  empirical risk minimizers. Then, we give an oracle inequality for
  the popular algorithm of aggregation with exponential weights. This
  provides a way of constructing estimators that are adaptive to the
  smoothness and to the structure of the intensity. We prove that
  these estimators are adaptive over anisotropic Besov balls. The
  probabilistic tools are maximal inequalities using the generic
  chaining mechanism, which was introduced by \cite{talagrand06},
  together with Bernstein's
  inequality for the underlying martingales. \\
  
  \noindent%
  \emph{Keywords.} Counting processes, Statistical learning, Adaptive
  estimation, Empirical risk minimization, Aggregation with
  exponential weights, Generic chaining
\end{abstract}

\section{Introduction}
\label{sec:introduction}

Over the last decade, statistical learning theory (initiated by
Vapnik, see for instance \cite{vapnik00}) has known a tremendous
amount of mathematical developments. By mathematical developments, we
mean risk bounds for learning algorithms, such as empirical risk
minimization, penalization or aggregation. However, in the vast
majority of papers, such bounds are derived in the context of
regression, density or classification. In the regression model, one
observes independent copies of $(X, Y)$, where $X$ is an input, or a
covariate, and $Y$ is a real output, or label. The aim is then to
infer on $\E(Y | X)$. The aim of this paper is to study the same
learning algorithms (such as empirical risk minimization) in a more
sophisticated setting, where the output is not a real number, but a
stochastic process. Namely, we focus on the situation where, roughly,
the output is a counting process, which has an intensity that depends
on the covariate $X$. The aim is then to infer on this intensity. This
framework contains many models, that are of importance in practical
situations, such as in medicine, actuarial science or econometrics,
see~\cite{ABGK}.

In this paper, we give risk bounds for empirical risk minimization and
aggregation algorithms. In summary, we try to ``find back'' the kind
of results one usually has in more ``standard'' models (see below for
references). Then, as an application of these results, we construct
estimators that have the property to adapt to the smoothness and to
the structure of the intensity (in the context of a single-index
model). Several papers work in a setting close to ours. Model
selection has been first studied in \cite{Patricia1} for the
non-conditional intensity of a Poisson process, see also
\cite{Patricia2}, \cite{birge-2007-55}, \cite{MR2449129} and
\cite{brunel_comte05}. Model selection for the same problem as the one
considered here has been studied in \cite{CGG}.

The agenda of the paper is the following. In this Section, we describe
the general setting and the corresponding estimation
problem. Section~\ref{sec:model_examples} is devoted to a presentation
of the main examples embedded in this setting. The main objects (such
as the empirical risk) and the basic deviation inequalities are
described in Section~\ref{sec:construction}. In Section~\ref{sec:erm},
we give risk bounds for the empirical risk minimization (ERM)
algorithm. To that end, we provide useful uniform deviation
inequalities using the generic chaining mechanism introduced in
\cite{talagrand06} (see Theorem~\ref{thm:generic_chaining} and
Corollary~\ref{cor:maximal_bracket}), and we give a general risk bound
for the ERM in Theorem~\ref{thm:ERM_bound} and its
Corollary~\ref{cor:erm_finite_dim}. In Section~\ref{sec:learning}, we
adapt a popular aggregation algorithm (aggregation with exponential
weights) to our setup, and give an oracle inequality (see
Theorem~\ref{thm:oracle}). In Section~\ref{sec:adaptive_estimation},
we use the results from Sections~\ref{sec:erm} and~\ref{sec:learning}
to construct estimators that adapt to the smoothness and to the
structure of the intensity. We compute the convergence rates of the
estimators, that are minimax optimal over anisotropic Besov
balls. Section~\ref{sec:main_proofs} contains the proofs. Some useful
results and tools are recalled in the Appendices.

\subsection{The model}

Let $(\Omega, \mathcal F, P)$ be a probability space and $(\mathcal
F_t)_{t \geq 0}$ a filtration satisfying the usual conditions, see
\cite{JacodShiryaev}. Let $N$ be a marked counting process with
compensator $\Lambda$ with respect to $(\mathcal F_t)_{t \geq 0}$, so
that $M = N - \Lambda$ is a $(\mathcal F_t)_{t \geq 0}$-martingale. We
assume that $N$ is a marked point process satisfying the \emph{Aalen
  multiplicative intensity model}. This means that $\Lambda$ writes
\begin{equation}
  \label{eq:aalen}
  \Lambda(t) = \int_0^t \alpha_0(u, X) Y(u) du
\end{equation}
for all $t \geq 0$, where:
\begin{itemize}
\item $\alpha_0$ is an unknown deterministic and nonnegative function
  called \emph{intensity};
\item $X \in \mathbb R^d$ is a $\mathcal F_0$-measurable random vector
  called \emph{covariates} or \emph{marks};
\item $Y$ is a predictable random process in $[0, 1]$.
\end{itemize}
With differential notations, this model can be written has
\begin{equation}
  \label{eq:model}
  d N(t) = \alpha_0(t, X) Y(t) dt + d M(t)
\end{equation}
for all $t \geq 0$ with the same notations as before, and taking $N(0)
= 0$. Now, assume that we observe $n$ i.i.d. copies
\begin{equation}
  \label{eq:whole_sample}
  D_n = \{ (X_i, N^i(t), Y^i(t)) : t \in [0, 1], 1 \leq i \leq n \}
\end{equation}
of $\{ (X, N(t), Y(t)) : t \in [0, 1] \}$. This means that we can
write
\begin{equation*}
  dN^i(t) = \alpha_0(t, X_i) Y^i(t) dt + d M^i(t)
\end{equation*}
for any $i = 1, \ldots, n$ where $M^i$ are independent $(\mathcal
F_t)_{t \geq 0}$-martingales. In this setting, the random variable
$N^i(t)$ is the number of observed failures during the time interval
$[0, t]$ of the individual~$i$.

The aim of the paper is to recover the intensity $\alpha_0$ on $[0,
1]$ based on the observation of the sample $D_n$. This general setting
includes several specific problems where the estimation of $\alpha_0$
is of importance for practical applications, see
Section~\ref{sec:model_examples}. In all what follows, we assume that
the support of $P_X$ is compact, but in order to simplify the
presentation, we shall assume the following.
\begin{assumption}
  \label{ass:support}
  The support of $P_X$ is $[0, 1]^d$\textup, and
  \begin{equation}
    \label{eq:sup_norm}
    \norm{\alpha}_\infty := \sup_{(t, x) \in [0, 1]^{d+1}}
    |\alpha(t, x)|
  \end{equation}
  is finite.
\end{assumption}
These assumptions on the model are very mild, excepted for the i.i.d
assumption of the sample, meaning that the individuals $i$ are
independent. Let us give several examples of interest that fit in this
general setting.

\subsection{Examples}
\label{sec:model_examples}

\subsubsection{Regression model for right-censored data}
\label{sec:censored_data}

Let $T$ be a nonnegative random variable (r.v.) and $X$ a vector of
covariates in $\mathbb R^d$. In this model, $T$ is not directly
observable: what we observe instead is
\begin{equation}
  \label{eq:censored}
  T^C := \min(T, C) \text{ and } \delta := I(T \leq C),
\end{equation}
where $C$ is a nonnegative random variable called
\emph{censoring}. This setting, where the data is \emph{right
  censored}, is of first importance in applications, especially in
medicine, biology and econometrics. In these cases, the r.v. $T$ can
represent the lifetime of an individual, the time from the the onset
of a disease to the healing, the duration of unemployment, etc. The
r.v. $C$ is often the time of last contact or the duration of
follow-up. In this model we assume the following mild assumption:
\begin{equation}
  \label{eq:weak_assumption}
  T \text{ and } C \text{ are independent conditionally to } X,
\end{equation}
which allows the censoring to depend on the covariates, see
\cite{MR2370107}. This assumption is weaker than the more common
  assumption that $T$ and $C$ are independent, see in particular
  \cite{MR1439707}.

In this case, the counting process writes
\begin{eqnarray*}
  N^i (t) = I(T_i^C \leq t, \delta_i=1) \text{ and }
  Y^i : Y^i (t) = I(T_i^C \geq t),
\end{eqnarray*}
see e.g.~\cite{ABGK}. In this setting, the intensity $\alpha_0$ is the
conditional hazard rate of $T$ given $X=x$, which is defined for all
$t > 0$ and $x \in \mathbb R^d$ by
\begin{equation*}
  \alpha_0(t, x) = \alpha_{T|X}(t, x) =
  \frac{f_{T|X}(t,x)}{1-F_{T|X}(t,x)},
\end{equation*}
where $f_{T|X}$ and $F_{T|X}$ are the conditional probability density
function (p.d.f.) and the conditional distribution function (d.f.) of
$T$ given $X$ respectively. The available data in this setting becomes
\begin{equation*}
  D_n := [ (X_i, T_i^C, \delta_i) : 1 \leq i \leq n],
\end{equation*}
where $(X_i, T_i^C, \delta_i)$ are i.i.d. copies of $(X, T^C,
\delta)$, where we assumed~\eqref{eq:weak_assumption}, namely $T_i$
and $C_i$ are independent conditionally to $X_i$ for $1 \leq i \leq
n$.

The nonparametric estimation of the hazard rate was initiated by
\cite{Beran}, \cite{STU}, \cite{DAB87}, \cite{McKeague} and \cite{LD}
extended his results.  Many authors have considered semiparametric
estimation of the hazard rate, beginning with \cite{Cox}, see
\cite{ABGK} for a review of the enormous literature on semiparemetric
models. We refer to \cite{Huang} and \cite{LNVdG} for some recent
developments. As far as we know, adaptive nonparametric estimation for
censored data in presence of covariates has only been considered in
\cite{brunel_comte_lacour07}, who constructed an optimal adaptive
estimator of the conditional density.

\subsubsection{Cox processes}
\label{sec:cox_processes}

Let $\eta^i, 1 \leq i \leq n$, be $n$ independent Cox processes on
$\mathbb R_+$, with mean-measure $A^i$ given by :
\begin{equation*}
  A^i(t) = \int_0^t \alpha(s, X_i) ds,
\end{equation*}
where $X_i$ is a vector of covariates in $\mathbb R^d$. This is a
particular case of longitudinal data, see e.g. Example~VII.2.15 in
\cite{ABGK}. The nonparametric estimation of the intensity of Poisson
processes without covariates has been considered in several papers. We
refer to \cite{Patricia1} for the adaptive estimation (using model
selection) for the intensity of nonhomogeneous Poisson processes in a
general space

\subsubsection{Regression model for transition intensities of Markov
  processes}
\label{sec:markov_processes}

Consider a $n$-sample of nonhomogeneous time-continuous Markov
processes $P^1,\dots,P^n$ with finite state space $\{1,\dots,k\}$ and
denote by $\lambda_{jl}$ the transition intensity from state $j$ to
state $l$. For an individual $i$ with covariate $X_i$, the
r.v. $N^i_{jl}(t)$ counts the number of observed direct transitions
from $j$ to $l$ before time $t$ (we allow the possibility of
right-censoring for example). Conditionally on the initial state, the
counting process $N^i_{jl}$ verifies the following Aalen
multiplicative intensity model:
\begin{eqnarray*}
  N^i_{jl}(t)=\int_0^t \lambda_{jl}(X_i,z)Y^i_j(z)dz+M^i(t) \text{ for
    all } t \geq 0,
\end{eqnarray*}
where $Y^i_j(t)=I(P^i(t-)=j)$ for all $t \geq 0$, see \cite{ABGK} or
\cite{Jacobsen}. This setting is discussed in \cite{ABGK}, see Example
VII.11 on mortality and nephropathy for insulin dependent diabetics.

We finally cite three papers, where the estimation of the intensity of
counting processes was considered, gathering as a consequence all the
previous examples, but in none of them the presence of covariates was
considered.  \cite{Ramlau} proposed a kernel-type estimator,
\cite{Gregoire} studied least squares cross-validation. More recently,
\cite{Patricia2} considered adaptive estimation by projection and
\cite{MR2449129} considered the adaptive estimation of the intensity
of a random measure by histogram-type estimators.

\subsection{Some notations}

From now on, we will denote by $L$ an absolute constant that can vary
from place to place (even in the same line), and by $c$ a constant
that depends on some parameters, that we shall indicate into
subscripts. In all of what follows, $D_n$ is an i.i.d. sample
satisfying model~\eqref{eq:model}, and we take $(X, (Y_t), (N_t))$
independent of $D_n$ that satisfies also model~\eqref{eq:model}. Note
that we will use both notations $(Z_t)_{t \geq 0}$ and $(Z(t))_{t \geq
  0}$ for a stochastic process $Z$. We denote by $\P^n[\cdot]$ the
joint law of $D_n$ and $\P[\cdot]$ the law of $(X, (Y_t), (N_t))$, and
by $\E^n[\cdot]$ and $\E[\cdot]$ the corresponding expectations.

\section{Main constructions and objects}
\label{sec:construction}

\subsection{An empirical risk}

Let $x \in \mathbb R^d$ and $(y_t)$, $(n_t)$ be functions $[0, 1]
\rightarrow \mathbb R^+$ with bounded variations, and let $\alpha :
[0, 1]^{d+1} \rightarrow \mathbb R^+$ be a bounded and predictable
function (that can eventually depend on $D_n$). We define the loss
function
\begin{equation*}
  \ell_\alpha(x, (y_t), (n_t)) = \int_0^1 \alpha(t, x)^2 y(t) dt - 2
  \int_0^1 \alpha(t, x) dn(t).
\end{equation*}
We define the least-squares type \emph{empirical risk} of $\alpha$ as:
\begin{align}
  \nonumber P_n(\ell_{\alpha}) &:= \frac1n \sum_{i=1}^n
  \ell_\alpha(X_i, (Y_t^i), (N_t^i)) \\
  \label{eq:empirial_risk}
  &= \frac{1}{n} \sum_{i=1}^n \int_0^1 \alpha(t, X_i)^2 Y^i(t) dt -
  \frac{2}{n} \sum_{i=1}^n \int_0^1 \alpha(t, X_i) d N^i(t).
\end{align}
This quantity measures the goodness-of-fit of $\alpha$ to the data
from $D_n$. It has been used in \cite{CGG} to perform model
selection. It is the empirical version of the \emph{theoretical risk}
\begin{align*}
  P(\ell_{\alpha}) &:= \E[ \ell_\alpha(X, (Y_t), (N_t)) \,|\, D_n] \\
  &=\E \Big[ \int_0^1 \alpha(t, X)^2 Y(t) dt - 2 \int_0^1 \alpha(t, X)
  d N(t) \,\big|\, D_n \Big].
\end{align*}
This risk is natural in this model. Indeed, if $\alpha$ is independent
of $D_n$, we have in view of~\eqref{eq:model}, since $M(t)$ is
centered:
\begin{align}
  \nonumber P(\ell_{\alpha}) &= \E \Big[ \int_0^1 ( \alpha(t, X)^2 - 2
  \alpha(t, X) \alpha_0(t, X) ) Y(t) dt \Big] - 2 \E\Big[ \int_0^1
  \alpha(t, X) d M(t) \Big] \\
  \label{eq:norm_risk}
  &= \norm{\alpha}^2 - 2 \prodsca{\alpha}{\alpha_0} \\
  &= \norm{\alpha -\alpha_0}^2 - \norm{\alpha_0}^2,
\end{align}
where:
\begin{align}
  \label{eq:norm}
  \prodsca{\alpha}{\alpha_0} &:= \int_{\mathbb R^d} \int_0^1 \alpha(t,
  x) \alpha_0(t, x) \E[Y(t) | X=x] dt P_X(dx), \quad \norm{\alpha}^2
  := \prodsca{\alpha}{\alpha}.
\end{align}
This is an inner product with respect to the bounded measure (it is
smaller than $1$)
\begin{equation}
  \label{eq:measure_mu}
  d \mu(t, x) := \E[Y(t) | X=x] dt P_X(dx).
\end{equation}
We will denote by $\L^2(\mu)$ the corresponding Hilbert space, and
define $\L^\infty(\mu)$ as the subset of $\L^2(\mu)$ consisting of
functions $\alpha$ such that $\norm{\alpha}_\infty < +\infty$.

In view of \eqref{eq:norm_risk}, $P(\ell_\alpha) - P(\ell_{\alpha_0})$
(called \emph{excess risk}) is equal to $\norm{\alpha -
  \alpha_0}^2$. As a consequence, $\alpha_0$ minimizes $\alpha \mapsto
P(\ell_\alpha)$, so a natural way to recover $\alpha_0$ is to take a
minimizer of $\alpha \mapsto P_n(\ell_\alpha)$. This is the basic idea
of empirical risk minimization, for which we propose risk bounds in
Section~\ref{sec:erm} below. Let us define the \emph{empirical norm}
\begin{equation}
  \label{eq:empirical_norm}
  \norm{\alpha}_n^2 := \frac{1}{n} \sum_{i=1}^n \int_0^1 \alpha(t,
  X_i)^2 Y^i(t) dt,
\end{equation}
so that we have $\E^n \norm{\alpha}_n^2 = \norm{\alpha}^2$ if $\alpha$
is deterministic. Note that $\norm{\alpha}_n \leq
\norm{\alpha}_\infty$ and $\norm{\alpha} \leq \norm{\alpha}_\infty$.
An important fact is that~\eqref{eq:model} entails
\begin{equation}
  \label{eq:empirical_norm_risk}
  P_n(\ell_\alpha) - P_n(\ell_{\alpha_0}) = \norm{\alpha - \alpha_0}_n^2 -
  \frac{2}{\sqrt{n}} Z_n(\alpha - \alpha_0),
\end{equation}
where $Z_n(\cdot)$ is given by
\begin{equation}
  \label{eq:empirical_process_training}
  Z_n(\alpha) = \frac{1}{\sqrt{n}} \sum_{i=1}^n \int_0^1 \alpha(t,
  X_i) d M^i(t),
\end{equation}
where $M^i$ are the independent copies of the martingale innovation
from~\eqref{eq:model}. The
decomposition~\eqref{eq:empirical_norm_risk} will be of importance in
the analysis of the problem.

\begin{remark}[Regression model for right-censored data]
  In the problem of censored survival times with covariates, see
  Section~\ref{sec:censored_data}, the semi-norm of estimation becomes
  \begin{equation*}
    \norm{\alpha}^2 = \int \int_0^1 \alpha(t, x)^2 \bar
    H_{T^C|X}(t, x) dt P_X(dx),
  \end{equation*}
  where $\bar H_{T^C|X}(t, x) := \P[ T^C > t | X = x]$, and where
  by~\eqref{eq:censored} and \eqref{eq:weak_assumption}:
  \begin{align*}
    \bar H_{T^C|X}(t, x) = \P[T > t | X = x] \P[C > t| X = x].
  \end{align*}
  This weighting of the norm is natural and, somehow, unavoidable in
  models with censored data. The same normalization can be found, for
  instance, in the Dvoretzky-Kiefer-Wolfowitz concentration inequality
  for the Kaplan-Meier estimator (without covariates), see Theorem~1
  in \cite{blm99}.
\end{remark}

\subsection{Deviation inequalities}

Let us denote by $\langle Z \rangle$ the predictable variation of a
random process $Z$. Note that we have, using
Assumption~\ref{ass:support}:
\begin{equation}
  \label{eq:variation_norm}
  \langle Z_n(\alpha) \rangle = \frac 1n \sum_{i=1}^n \int_0^1
  \alpha(t, X_i)^2 \alpha_0(t, X) Y^i(t) dt \leq \norm{\alpha_0}_\infty
  \norm{\alpha}_n^2.
\end{equation}
A useful result is then the following. First, introduce, for $\delta >
0$,
\begin{equation*}
  \psi_{n, \delta}(h) := \log \E^n [ e^{h Z_n(\alpha)} 
  \ind{\langle Z_n(\alpha) \rangle \leq \delta^2} ]
\end{equation*}
and the Cram\'er transform $\psi_{n, \delta}^*(z) := \sup_{h > 0} ( hz
- \psi_{n, \delta}(h))$.
\begin{proposition}
  \label{prop:deviation_martingale}
  For any bounded $\alpha$ and any $z, \delta > 0$, the following
  inequality holds\textup:
  \begin{equation}
    \label{eq:cramer_minoration}
    \psi_{n, \delta}^*(z) \geq \frac{n \delta^2
    }{\norm{\alpha}_\infty^2} g\Big( \frac{z
      \norm{\alpha}_\infty }{\delta^2 \sqrt{n}} \Big),
  \end{equation}
  where $g(x) := (1+x) \log(1+x) - x$.
\end{proposition}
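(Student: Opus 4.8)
The plan is to obtain the lower bound on the Cramér transform by controlling the cumulant generating function $\psi_{n,\delta}(h)$ from above, and then optimizing the Legendre transform explicitly. The key probabilistic input is that $Z_n(\alpha)$ is (up to the normalization $1/\sqrt n$) a sum of stochastic integrals against the independent martingales $M^i$, so its exponential moments are governed by its predictable variation $\langle Z_n(\alpha)\rangle$, which by \eqref{eq:variation_norm} is bounded by $\norm{\alpha_0}_\infty \norm{\alpha}_n^2$ — but more importantly the integrand $\alpha(t,X_i)$ is bounded in absolute value by $\norm{\alpha}_\infty$. The standard fact for a locally bounded martingale with jumps bounded by a constant $b$ is the exponential supermartingale inequality: for $h>0$, $\E\exp\!\big(h\,S_t - (e^{hb}-1-hb)b^{-2}\langle S\rangle_t\big)\le 1$, or in the cleaner Bernstein-type form one gets $\E\exp(hS_t - \phi(hb)b^{-2}\langle S\rangle_t)\le 1$ where $\phi(u)=e^u-1-u$. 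Applying this to $S = Z_n(\alpha)$ (with jumps of size at most $\norm{\alpha}_\infty/\sqrt n$ after the $1/\sqrt n$ scaling) and restricting to the event $\{\langle Z_n(\alpha)\rangle\le \delta^2\}$ yields
\begin{equation*}
  \psi_{n,\delta}(h) = \log \E^n\big[ e^{h Z_n(\alpha)} \ind{\langle Z_n(\alpha)\rangle\le \delta^2}\big]
  \le \frac{n}{\norm{\alpha}_\infty^2}\, \phi\!\Big(\frac{h\norm{\alpha}_\infty}{\sqrt n}\Big)\,\delta^2,
\end{equation*}
where $\phi(u)=e^u-1-u$.

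With this upper bound in hand, the rest is a deterministic optimization. By definition
\begin{equation*}
  \psi_{n,\delta}^*(z) = \sup_{h>0}\big(hz - \psi_{n,\delta}(h)\big)
  \ge \sup_{h>0}\Big(hz - \frac{n\delta^2}{\norm{\alpha}_\infty^2}\,\phi\!\Big(\frac{h\norm{\alpha}_\infty}{\sqrt n}\Big)\Big).
\end{equation*}
Substituting $u = h\norm{\alpha}_\infty/\sqrt n$ turns the right-hand side into $\frac{n\delta^2}{\norm{\alpha}_\infty^2}\sup_{u>0}\big(u\,\frac{z\norm{\alpha}_\infty}{\delta^2\sqrt n} - \phi(u)\big)$. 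The Legendre transform of $\phi(u)=e^u-1-u$ is exactly $\phi^*(x) = (1+x)\log(1+x)-x = g(x)$ for $x>0$, which is a classical computation: setting the derivative $e^u-1 = x$ gives $u=\log(1+x)$, and plugging back yields $g(x)$. Therefore the supremum equals $g\big(z\norm{\alpha}_\infty/(\delta^2\sqrt n)\big)$, and we recover \eqref{eq:cramer_minoration}.

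The main obstacle — and the only place requiring genuine care — is justifying the exponential supermartingale bound in the present marked-counting-process setting and, in particular, making rigorous the truncation by the indicator $\ind{\langle Z_n(\alpha)\rangle\le\delta^2}$. Since $\langle Z_n(\alpha)\rangle$ is not deterministic, one cannot simply bound $\psi_{n,\delta}(h)$ by a pointwise argument; the clean way is to note that $\mathcal E_t := \exp\big(h Z_n(\alpha)_t - \phi(h\norm{\alpha}_\infty/\sqrt n)\,(n/\norm{\alpha}_\infty^2)\,\langle Z_n(\alpha)\rangle_t\big)$ is a nonnegative supermartingale with $\E^n \mathcal E_0 \le 1$, hence $\E^n \mathcal E_1 \le 1$; then on the event $\{\langle Z_n(\alpha)\rangle\le\delta^2\}$ one has $e^{hZ_n(\alpha)}\ind{\langle Z_n(\alpha)\rangle\le\delta^2} \le \mathcal E_1 \exp\big(\phi(h\norm{\alpha}_\infty/\sqrt n)(n/\norm{\alpha}_\infty^2)\delta^2\big)$, and taking expectations and logarithms gives the stated bound on $\psi_{n,\delta}(h)$. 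The supermartingale property itself follows from the exponential formula for stochastic integrals against $M^i = N^i - \Lambda^i$ together with the elementary inequality $e^y - 1 - y \le \phi(b) y^2/b^2 \cdot (\text{something})$ controlling the jumps of size $\le b$; this is a by-now-standard computation (see e.g. the martingale Bernstein inequalities cited in the paper), so I would invoke it rather than redo it in detail.
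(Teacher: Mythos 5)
Your proof is correct and follows essentially the same route as the paper: bound the truncated cumulant generating function above by $\frac{n\delta^2}{\norm{\alpha}_\infty^2}\,\phi\big(h\norm{\alpha}_\infty/\sqrt n\big)$ with $\phi(u)=e^u-1-u$, using an exponential supermartingale for counting-process martingales whose jumps are bounded by $\norm{\alpha}_\infty/\sqrt n$, and then take the Legendre transform and identify $\phi^*=g$. The only divergence is in how the indicator $\ind{\langle Z_n(\alpha)\rangle\le\delta^2}$ is handled: the paper splits $e^{hZ_n}=e^{U_h}e^{S_h}$ with a random compensator $S_h$ and applies Cauchy--Schwarz, whereas you absorb the deterministic jump bound into the supermartingale $\mathcal E_t$ and bound $e^{hZ_n}\ind{\cdot}\le\mathcal E_1\,e^{c\delta^2}$ directly, which is a slightly tidier path to the same estimate.
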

This result and the deviation inequalities stated below are related to
standard results concerning martingales with jumps, see
\cite{lipstershiryayev}, \cite{vandegeer95} or \cite{Patricia2}, among
others. For the sake of completeness we give a proof of
Proposition~\ref{prop:deviation_martingale} in
Section~\ref{sec:main_proofs}. From the
minoration~\eqref{eq:cramer_minoration}, we can derive several
deviation inequalities. Using the Cram\'er-Chernoff bound
$\P^n[Z_n(\alpha) > z, \langle Z_n(\alpha) \rangle \leq \delta^2] \leq
\exp( -\psi_{n, \delta}^*(z) )$, we obtain the following Benett's
inequality:
\begin{equation*}
  \P^n[ Z_n(\alpha) > z, \langle Z_n(\alpha) \rangle \leq \delta^2
  ] \leq\exp\Big( -\frac{n
    \delta^2}{\norm{\alpha}_\infty^2}
  g\Big( \frac{z \norm{\alpha}_\infty }{
    \delta^2 \sqrt{n}} \Big) \Big)
\end{equation*}
for any $z > 0$. As a consequence, since $g(x) \geq 3x^2 / (2(x + 3))$
for any $x \geq 0$, we obtain the following Bernstein's inequality:
\begin{equation}
  \label{eq:bernstein1}
  \P^n[ Z_n(\alpha) > z, \langle Z_n(\alpha) \rangle \leq
  \delta^2 ] \leq \exp
  \Big( -\frac{z^2}{ 2 ( \delta^2 + z \norm{\alpha}_\infty / (3
    \sqrt{n}) } \Big).
\end{equation}
Another useful Bernstein's inequality can be derived using the
following trick from~\cite{birge_massart98}: since $g(x) \geq g_2(x)$
for any $x \geq 0$ where $g_2(x) = x+1 - \sqrt{1 + 2x}$, and since
$g_2^{-1}(y) = \sqrt{2y} + y$, we have
\begin{equation}
  \label{eq:bernstein2}
  \P^n\Big[ Z_n(\alpha) > \delta \sqrt{2 x} + \frac{
    \norm{\alpha}_\infty x }{ \sqrt{n}}, \,\, \langle Z_n(\alpha)
  \rangle \leq \delta^2 \Big] \leq \exp(-x)
\end{equation}
for any $x > 0$. Note that from~\eqref{eq:cramer_minoration}, we can
derive a uniform deviation inequality. Consider a family $(Z_n(\alpha)
: \alpha \in A)$, where $A$ is a set of bounded functions with finite
cardinality $N$. Since $\psi_{n, \delta}^{* -1}(z) \leq z
\norm{\alpha}_\infty / \sqrt{n} + \delta \sqrt{2 z}$ (see above) we
have, using Pisier's argument (see Section~2 in~\cite{massart03}),
that
\begin{align}
  \nonumber \P^n\Big[ Z_n(\alpha) > \delta \sqrt{2 (\ln N + x)} &+
  \frac{ \norm{\alpha}_\infty (\ln N + x) }{ \sqrt{n}}, \,\, \langle
  Z_n(\alpha) \rangle \leq \delta^2 \text{ for some } \alpha \in A
  \Big] \\ \label{eq:maximal_finite} &\leq \exp(-x).
\end{align}
In view of the next Lemma, we can remove the event $\{ \langle
Z_n(\alpha) \rangle \leq \delta^2 \}$ from the previous
inequalities. Indeed, a consequence of~\eqref{eq:bernstein2} is the
following.
\begin{lemma}
  \label{lem:tricky_deviation}
  If $\alpha$ is bounded, we have for any $x > 0$\textup:
  \begin{equation*}
    \P^n \Big[ Z_n(\alpha) \geq c \norm{\alpha} \sqrt{x} +
    (c + 1) \frac{\norm{ \alpha }_\infty x}{\sqrt n} \Big] \leq 2
    \exp(-x),
  \end{equation*}
  where $c = c_{\norm{\alpha_0}_\infty} := [\sqrt 2(\sqrt 2 + 1)
  \norm{\alpha_0}_\infty]^{1/2}$.
\end{lemma}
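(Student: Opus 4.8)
The obstacle is that the Bernstein bound~\eqref{eq:bernstein2} controls $Z_n(\alpha)$ only on the event $\{\langle Z_n(\alpha)\rangle \le \delta^2\}$ for a \emph{deterministic} $\delta$, whereas~\eqref{eq:variation_norm} only gives $\langle Z_n(\alpha)\rangle \le \norm{\alpha_0}_\infty\norm{\alpha}_n^2$, a random bound whose only sure deterministic majorant, $\norm{\alpha_0}_\infty\norm{\alpha}_\infty^2$, is too crude (it would produce a $\norm{\alpha}_\infty\sqrt x$ term in place of the wanted $\norm{\alpha}\sqrt x$). The plan is to replace the random $\norm{\alpha}_n^2$ by a deterministic majorant valid with probability at least $1-e^{-x}$, obtained from the concentration of $\norm{\alpha}_n^2$ around $\E^n\norm{\alpha}_n^2 = \norm{\alpha}^2$, and then to apply~\eqref{eq:bernstein2} with that deterministic $\delta$. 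The extra high-probability event is exactly what yields the factor $2$ in front of $\exp(-x)$.

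First I would write $\norm{\alpha}_n^2 = \frac1n\sumin W_i$ with $W_i := \int_0^1\alpha(t,X_i)^2 Y^i(t)\,dt$; these are i.i.d., nonnegative, bounded by $\norm{\alpha}_\infty^2$, with mean $\norm{\alpha}^2$ and $\var(W_i)\le \E^n[W_i^2] \le \norm{\alpha}_\infty^2\,\E^n[W_i] = \norm{\alpha}_\infty^2\norm{\alpha}^2$. A classical Bernstein inequality for bounded i.i.d.\ variables (stated in the Appendices) then gives, with probability at least $1-e^{-x}$,
\begin{equation*}
  \norm{\alpha}_n^2 \le \norm{\alpha}^2 + \norm{\alpha}\,\norm{\alpha}_\infty\sqrt{\tfrac{2x}{n}} + C\,\frac{\norm{\alpha}_\infty^2 x}{n}
\end{equation*}
for a numerical constant $C\le 1$. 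Balancing the cross term via $\norm{\alpha}\,\norm{\alpha}_\infty\sqrt{2x/n} \le \epsilon\norm{\alpha}^2 + \frac{1}{2\epsilon}\frac{\norm{\alpha}_\infty^2 x}{n}$ with the choice $\epsilon = 1/\sqrt 2$, this bound together with~\eqref{eq:variation_norm} gives $\langle Z_n(\alpha)\rangle \le \delta^2$ on that event, where
\begin{equation*}
  \delta^2 := \norm{\alpha_0}_\infty\Big[\big(1+\tfrac1{\sqrt2}\big)\norm{\alpha}^2 + \big(\tfrac1{\sqrt2}+C\big)\frac{\norm{\alpha}_\infty^2 x}{n}\Big]
\end{equation*}
is deterministic.

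I would then apply~\eqref{eq:bernstein2} with this deterministic $\delta$ and take a union bound over the two exceptional events: on an event of probability at least $1-2e^{-x}$ one has $Z_n(\alpha) \le \delta\sqrt{2x} + \norm{\alpha}_\infty x/\sqrt n$. It remains to simplify $\delta\sqrt{2x}$ using $\sqrt{a+b}\le\sqrt a + \sqrt b$: the coefficient of $\norm{\alpha}\sqrt x$ is $\sqrt{2(1+1/\sqrt2)\norm{\alpha_0}_\infty} = \sqrt{(2+\sqrt2)\norm{\alpha_0}_\infty} = c$ since $2+\sqrt2 = \sqrt2(\sqrt2+1)$, while the coefficient of $\norm{\alpha}_\infty x/\sqrt n$ is $\sqrt{2(1/\sqrt2+C)\norm{\alpha_0}_\infty} = \sqrt{(\sqrt2+2C)\norm{\alpha_0}_\infty}\le c$ because $C\le1$. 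Adding the remaining $\norm{\alpha}_\infty x/\sqrt n$ yields the announced bound $c\norm{\alpha}\sqrt x + (c+1)\norm{\alpha}_\infty x/\sqrt n$.

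The main obstacle is the first step: recognizing that the random predictable variation $\langle Z_n(\alpha)\rangle$ has to be dominated by a \emph{deterministic} envelope before~\eqref{eq:bernstein2} can be used, and then tracking the constants through Bernstein's inequality and the subadditivity of the square root carefully enough — in particular choosing the balancing parameter $\epsilon = 1/\sqrt2$ — so that the constant in front of $\norm{\alpha}\sqrt x$ comes out exactly equal to $c_{\norm{\alpha_0}_\infty}$ rather than something larger.
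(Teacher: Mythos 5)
Your argument is correct and matches the paper's own proof step for step: apply an i.i.d.\ Bernstein inequality to $\norm{\alpha}_n^2$, use~\eqref{eq:variation_norm} to dominate $\langle Z_n(\alpha)\rangle$ by a deterministic $\delta^2$ on the concentration event, apply~\eqref{eq:bernstein2}, and take a union bound over the two exceptional events, which produces the factor $2$. The paper writes the variance bound with $\norm{\alpha^2}$ and leaves the algebraic simplification of $\delta_{n,x}\sqrt{2\norm{\alpha_0}_\infty x}$ implicit, but that is exactly the Young-type balancing you spell out, with the same choice $\epsilon = 1/\sqrt 2$ and the same constant $c_{\norm{\alpha_0}_\infty}$.
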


\begin{proof}
  Since $\E[(\int_0^1 \alpha(t, X)^2 Y(t) dt)^2] \leq
  \norm{\alpha^2}^2$ and $\norm{\int_0^1 \alpha(t, X)^2 Y(t)
    dt}_\infty \leq \norm{\alpha}_\infty^2$, Bernstein's inequality
  for the deviation of the sum of i.i.d. random variables gives:
  \begin{equation}
    \label{eq:bernstein_norm}
    \P^n\Big[ \norm{\alpha}_n^2 - \norm{\alpha}^2 >  
    \frac{\norm{\alpha^2} \sqrt{2x}}{\sqrt n} +  \frac
    {\norm{\alpha}_\infty^2 x} n \Big] \leq \exp(-x).
  \end{equation}
  Take $\delta_{n, x}^2 := \norm{\alpha}^2 + \norm{\alpha^2} \sqrt
  {2x} / \sqrt n + \norm{\alpha}_\infty^2 x / n$. We have $\P[
  \norm{\alpha}_n^2 > \delta_{n,x}^2 ] \leq \exp(-x)$ and
  \begin{equation*}
    \delta_{n,x} \sqrt {2 \norm{\alpha_0}_\infty x} + \frac{
      \norm{\alpha}_\infty x}{\sqrt n} \leq c_{\norm{\alpha_0}_\infty}
    \sqrt{x} \norm{\alpha} + (c_{\norm{\alpha_0}_\infty} + 1) \frac{
      \norm{\alpha}_\infty x}{\sqrt n}.
  \end{equation*}
  Now, use~\eqref{eq:variation_norm} and \eqref{eq:bernstein2} to
  obtain
  \begin{equation*}
    \P \Big[ Z_n(\alpha) \geq \delta_{n,x} \sqrt {2
      \norm{\alpha_0}_\infty x} + \frac{ \norm{\alpha}_\infty x}{\sqrt
      n}, \norm{\alpha}_n^2 \leq \delta_{n, x}^2 \Big] \leq \exp(-x)
  \end{equation*}
  for any $x > 0$. This concludes the proof of the Lemma, by a
  decomposition over $\{ \norm{\alpha}_n > \delta_{n,x} \} $ and $\{
  \norm{\alpha}_n \leq \delta_{n,x} \}$.
\end{proof}

These deviation inequalities are the starting point of the proof of
risk bounds for the algorithm of empirical risk minimization
(ERM). Such a bound is given in Section~\ref{sec:erm} below, see
Theorem~\ref{thm:ERM_bound}. It requires a generalization of the
bound~\eqref{eq:maximal_finite} to a general set $A$, which is given
in Section~\ref{sec:chaining}.

\section{Empirical risk minimization}
\label{sec:erm}

The very basic idea of empirical risk minimization (ERM) is the
following. Since $\alpha_0$ minimizes the risk $\alpha \mapsto
P(\ell_\alpha)$, a natural estimate of $\alpha_0$ is a minimizer of
the empirical risk $\alpha \mapsto P_n(\ell_\alpha)$ over some set of
function $A$, usually called a \emph{sieve}. There is hope that such
an empirical minimizer is close to $\alpha_0$, at least if $\alpha_0$
is not far from $A$ and if $(P - P_n)(\ell_\alpha)$ is small (more
details below). Also known as M-estimation, this algorithm has been
studied extensively, see for instance \cite{birge_massart98},
\cite{vapnik00}, \cite{van_de_geer00}, \cite{massart03},
\cite{bartlett_mendelson06}, among many others.

If no minimizer of the empirical risk exists, we can simply consider,
as this is usually done in the literature, a $\rho$-minimizer
according to the following definition.
\begin{definition}[$\rho$-ERM]
  \label{def:erm}
  Let $\rho > 0$ be fixed. A $\rho$-Empirical Risk Minimizer
  ($\rho$-ERM) is an estimator $\bar \alpha_n \in A$ satisfying
  \begin{equation*}
    P_n(\ell_{\bar \alpha_n}) \leq \rho + \inf_{\alpha \in A}
    P_n(\ell_{\alpha}),
  \end{equation*}
  where $P_n(\ell_{\alpha})$ is the empirical
  risk~\eqref{eq:empirial_risk}.
\end{definition}
For what follows, one can take $\rho = 1/n$, since typically, the risk
of $\bar \alpha_n$ is larger than that. To prove a risk bound for the
ERM, one usually needs a deviation inequality for
\begin{equation*}
  \zeta_n(A) := \sup_{\alpha \in A} (P - P_n)(\ell_\alpha).
\end{equation*}
However, when $A$ is not countable, $\zeta_n$ may be not
measurable. This is not a problem since we can always consider the
outer expectation in the statement of the deviation (see
\cite{van_der_vaart_wellner96}), or simply assume the following.
\begin{assumption}
  \label{ass:countable}
  There is a countable subset $A'$ of $A$ such that almost surely,
  \begin{equation*}
    \sup_{\alpha \in A'} P_n(\ell_\alpha) = \sup_{\alpha \in A}
    P_n(\ell_\alpha).
  \end{equation*}
  Moreover\textup, assume that there is $b > 0$ such that
  $\norm{\alpha}_\infty \leq b$ for every $\alpha \in A$.
\end{assumption}
The map $\alpha \mapsto P_n(\ell_\alpha)$ is continuous over $\mathsf
C([0, 1]^{d+1})$ endowed with the norm $\norm{\cdot}_\infty$. So,
given that $A \subset \mathsf C([0, 1]^{d+1})$, the first part of
Assumption~\ref{ass:countable} is met. Note that this embedding holds
in the examples considered in Section~\ref{sec:adaptive_estimation}.
The second part is rather unpleasant, but mandatory if no extra
assumption is made on $A$, and since an $\L^2$ metric is considered
for the estimation of $\alpha_0$.

From now on, we take $\alpha_*$ such that $P(\ell_{\alpha_*}) =
\inf_{\alpha \in A} P(\ell_\alpha)$ (if no such $\alpha_*$ exists, we
can simply consider $\alpha_*$ such that $P(\ell_{\alpha_*}) \leq
\inf_{\alpha \in A} P(\ell_\alpha) + \rho$). Note that $\alpha_*$ may
not be unique at this point, we just pick one of the minimizers. The
function $\alpha_*$ is usually called the \emph{target} function, or
the \emph{oracle} in learning theory, see \cite{cucker_smale02} for
instance.

\subsection{Peeling}

A common way to prove a risk bound for the ERM uses the idea of
\emph{localization} or \emph{peeling} (see for instance
\cite{massart03}, Lemma~4.23 and \cite{van_de_geer00, van_de_geer07},
among others). The idea presented here is very close to these
references. First, do a shift: take $\epsilon > 0$, and use the fact
that $\bar \alpha_n$ is a $\rho$-ERM to obtain
\begin{align*}
  P(\ell_{\bar \alpha_n}) - P(\ell_{\alpha_*}) &\leq (1 + \epsilon)
  \rho + P(\ell_{\bar \alpha_n}) - P(\ell_{\alpha_*}) - (1 + \epsilon)
  (P_n(\ell_{\bar \alpha_n}) - P_n(\ell_{\alpha_*})) \\
  &\leq (1 + \epsilon) \rho + \xi_{n, \epsilon}(A),
\end{align*}
where
\begin{equation*}
  \xi_{n, \epsilon}(A) := \sup_{\alpha \in A} \Big( (1 + \epsilon)
  (P - P_n) (\ell_\alpha - \ell_{\alpha_*}) - \epsilon P(\ell_{\alpha}
  - \ell_{\alpha_*}) \Big).
\end{equation*}
Then, for some constants $\delta > 0$ and $q > 1$, decompose the
supremum over $A$ into suprema over annuli $A_j(\delta)$, where
$A(\delta) = \{ \alpha \in A : P(\ell_\alpha) - P(\ell_{\alpha_*})
\leq \delta\}$, and for $j \geq 1$, $A_j(\delta) = \{ \alpha \in A :
q^j \delta < P(\ell_\alpha) - P(\ell_{\alpha_*}) \leq q^{j+1}
\delta\}$. Assume for the moment that there exists an increasing
function $\psi : \mathbb R^+ \rightarrow \mathbb R^+$ and
$\delta_{\min}> 0$ such that for any $x > 0$ and $\delta >
\delta_{\min}$, we have with a probability larger than $1 - L e^{-x}$:
\begin{equation}
  \label{eq:concentration}
  \sup_{\alpha \in A(\delta)} (P - P_n)(\ell_\alpha - \ell_{\alpha_*})
  \leq \frac{\psi(\delta)(1 + \sqrt x \vee x)}{\sqrt n}.
\end{equation}
Such an inequality will be proved in Section~\ref{sec:chaining} below.
It entails that, with a probability larger than $1 - L e^{-x}$:
\begin{equation}
  \label{eq:peeling_argument}
  \xi_{n, \epsilon}(A) \leq (1 + \epsilon) \frac{\psi(\delta)(1 +
    \sqrt x \vee x)}{\sqrt n} +
  \sup_{j \geq 1} \Big(  (1+\epsilon) \frac{\psi(q^{j+1} \delta)(1 + \sqrt x
    \vee x)}{\sqrt n} - \epsilon q^j \delta \Big).
\end{equation}
Assume further that $\psi$ is continuous, increasing, such that
$\delta \mapsto \psi(\delta) / \delta$ is decreasing and $\psi^{-1}$
is strictly convex. We can define the convex conjugate of $\psi^{-1}$
as
\begin{equation}
  \label{eq:convex_conjuguate}
  \psi^{-1*}(\delta) := \sup_{x > 0} \{ x \delta - \psi^{-1}(x) \}.
\end{equation}
The following Lemma comes in handy to choose a parameter $\delta$ that
kills the second term in the right hand side of
\eqref{eq:peeling_argument}.
\begin{lemma}
  \label{lem:convex_trick}
  Let $\psi : \mathbb R^+ \rightarrow \mathbb R^+$ be a continuous and
  increasing function and assume that $\psi^{-1}$ is strictly convex.
  If $\delta := \psi^{-1 *}(2 x / y)$\textup, we have
  \begin{equation*}
    x \psi(\delta) \leq y \delta
  \end{equation*}
  for any $x, y > 0$.
\end{lemma}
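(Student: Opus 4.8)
The plan is to read this off from the Fenchel--Young inequality, evaluated at a well-chosen point. For brevity, write $\phi := \psi^{-1}$, which by hypothesis is continuous, strictly increasing and strictly convex, and whose convex conjugate is $\phi^* = \psi^{-1*}$, as defined in~\eqref{eq:convex_conjuguate}. Directly from the definition of $\phi^*$ as a supremum, for every $s > 0$ and every admissible $u > 0$ one has
\[
  \phi^*(s) \;\geq\; s\, u - \phi(u),
\]
i.e.\ the Fenchel--Young inequality $\phi^*(s) + \phi(u) \geq s u$.

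The key step is then the substitution $s = 2x/y$ together with the choice $u = \psi(\delta)$, where $\delta := \phi^*(2x/y)$ as in the statement. Since $\phi \circ \psi = \mathrm{id}$ we have $\phi(\psi(\delta)) = \delta$, and by definition $\phi^*(2x/y) = \delta$, so the displayed inequality becomes
\[
  \delta \;\geq\; \frac{2x}{y}\,\psi(\delta) - \delta,
\]
that is, $2\delta \geq \tfrac{2x}{y}\,\psi(\delta)$, which rearranges exactly into $x\,\psi(\delta) \leq y\,\delta$. That is the whole argument; note that the choice $u = \psi(\delta)$ is in fact the optimizer in the supremum defining $\phi^*(2x/y)$, so the bound is tight.

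The only point that deserves a line of justification is that the evaluation point $u = \psi(\delta)$ is legitimate: one needs $\delta$ to lie in the domain of $\psi$ so that $\psi(\delta)$ makes sense, and $\psi(\delta) > 0$ so that it is an admissible value of $u$ in the supremum. Both are automatic, since $\psi$ maps $\mathbb R^+$ into $\mathbb R^+$ and $\delta = \phi^*(2x/y) > 0$. No use is made of the extra structural hypotheses on $\psi$ stated before the lemma (continuity of $\psi$, $\delta \mapsto \psi(\delta)/\delta$ decreasing, etc.); those are needed later in the peeling argument, not here. Accordingly I do not anticipate any genuine obstacle in this proof --- the entire content is the substitution $u = \psi(\delta)$ into Fenchel--Young.
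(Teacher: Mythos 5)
Your proof is correct and is essentially identical to the paper's own argument: both amount to the Fenchel--Young inequality $\psi^{-1*}(u) + \psi^{-1}(v) \geq uv$ evaluated at $u = 2x/y$ and $v = \psi(\delta)$, combined with the identities $\psi^{-1}(\psi(\delta))=\delta$ and $\psi^{-1*}(2x/y)=\delta$. One small correction: the closing remark that $u = \psi(\delta)$ is the optimizer in the supremum defining $\psi^{-1*}(2x/y)$ (hence the bound is tight) is false in general --- it would require $(\psi^{-1*})'(s) = \psi(\psi^{-1*}(s))$, which fails for example when $\psi^{-1}(u) = u^p/p$ with $p \neq 2$ --- but this claim is decorative and is not used anywhere in the argument, so the proof itself stands.
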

\begin{proof}
  Simply write
  \begin{align*}      
    x \psi(\delta) = \frac{y}{2} \frac{2x}{y} \psi\Big( \psi^{-1*}
    \Big( \frac{2x}{y} \Big) \Big) \leq \frac{y}{2} \Big( \psi^{-1*}
    \Big(\frac{2x}{y} \Big) + \psi^{-1*} \Big( \frac{2x}{y} \Big)
    \Big),
  \end{align*}
  where the trick is to use the fact that $u v \leq \psi^{-1*}(u) +
  \psi^{-1}(v)$ for any $u, v > 0$.
\end{proof}
Using Lemma~\ref{lem:convex_trick} and the fact that $\psi(q^{j+1}
\delta) \leq q^{j+1} \psi(\delta)$, we obtain that for the choice
\begin{equation*}
  \delta_{n, \epsilon}(x) := \psi^{-1*} \Big( \frac{2 q (1+\epsilon) (1
    + \sqrt x \vee x)}{\epsilon \sqrt n} \Big),
\end{equation*}
we have, with a probability larger than $1 - L e^{-x}$:
\begin{equation*}
  P(\ell_{\bar \alpha_n}) - P(\ell_{\alpha_*}) \leq (1 + \epsilon)
  \rho + \epsilon \delta_{n, \epsilon}(x).
\end{equation*}
We have proved the following result.
\begin{proposition}[Peeling]
  \label{prop:peeling}
  Assume that~\eqref{eq:concentration} holds for any $\delta >
  \delta_{\min}$\textup, where $\psi : \mathbb R^+ \rightarrow \mathbb
  R^+$ is a continuous and increasing function such that $\psi^{-1}$
  is strictly convex and $\delta \mapsto \psi(\delta) / \delta$ is
  decreasing. If $\bar \alpha_n$ is a $\rho$-ERM according to
  Definition~\ref{def:erm}\textup, we have for any $x > 0$\textup:
  \begin{equation*}
    P(\ell_{\bar \alpha_n}) \leq P(\ell_{\alpha_*}) + (1 + \epsilon)
    \rho + \epsilon \delta_{n, \epsilon}(x)
  \end{equation*}
  with probability larger than $1 - L e^{-x}$\textup, where
  \begin{equation*}
    \delta_{n, \epsilon}(x) := \psi^{-1*} \Big( \frac{2(1+\epsilon) q (1
      + \sqrt x \vee x)}{\epsilon \sqrt n} \Big) \vee \delta_{\min}.
  \end{equation*}  
\end{proposition}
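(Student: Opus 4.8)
The plan is to show that the calculations already carried out in the ``Peeling'' subsection constitute exactly the proof, so the work is mostly to assemble those pieces into a clean chain of inequalities. First I would fix $\epsilon > 0$, $q > 1$ and $\delta > \delta_{\min}$, and start from the definition of a $\rho$-ERM: since $\bar\alpha_n \in A$ satisfies $P_n(\ell_{\bar\alpha_n}) \leq \rho + \inf_{\alpha \in A} P_n(\ell_\alpha) \leq \rho + P_n(\ell_{\alpha_*})$, the shift computation gives
\begin{equation*}
  P(\ell_{\bar\alpha_n}) - P(\ell_{\alpha_*}) \leq (1+\epsilon)\rho + \xi_{n,\epsilon}(A),
\end{equation*}
where $\xi_{n,\epsilon}(A) = \sup_{\alpha \in A}\big((1+\epsilon)(P-P_n)(\ell_\alpha - \ell_{\alpha_*}) - \epsilon P(\ell_\alpha - \ell_{\alpha_*})\big)$; here I use that $P(\ell_\alpha) - P(\ell_{\alpha_*}) \geq 0$ for all $\alpha \in A$ by the choice of $\alpha_*$, so subtracting $\epsilon$ times this nonnegative quantity only decreases the supremand over $A \setminus A(\delta)$, which legitimises the peeling split.

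Next I would decompose $A = A(\delta) \cup \bigcup_{j \geq 1} A_j(\delta)$ and bound $\xi_{n,\epsilon}(A)$ on the event of probability at least $1 - Le^{-x}$ on which \eqref{eq:concentration} holds simultaneously — strictly speaking \eqref{eq:concentration} is stated for the single radius $\delta$, but it is understood to hold for all dyadic radii $q^{j+1}\delta$ at once on one good event (this is how Section~\ref{sec:chaining} produces it), so on $A_j(\delta)$ we have $(P-P_n)(\ell_\alpha - \ell_{\alpha_*}) \leq \psi(q^{j+1}\delta)(1 + \sqrt x \vee x)/\sqrt n$ while $-\epsilon P(\ell_\alpha - \ell_{\alpha_*}) \leq -\epsilon q^j \delta$. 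Taking suprema yields \eqref{eq:peeling_argument}. Then I apply $\psi(q^{j+1}\delta) \leq q^{j+1}\psi(\delta)$ (valid since $\delta \mapsto \psi(\delta)/\delta$ is decreasing) to each term in the $\sup_{j\geq 1}$, and invoke Lemma~\ref{lem:convex_trick} with $x \leftarrow (1+\epsilon)(1 + \sqrt x \vee x)q\psi(\delta)/\sqrt n$ ... more cleanly, choosing $\delta = \delta_{n,\epsilon}(x) = \psi^{-1*}\big(2q(1+\epsilon)(1+\sqrt x \vee x)/(\epsilon\sqrt n)\big)$, the Lemma gives $(1+\epsilon)q^{j+1}\psi(\delta)(1+\sqrt x \vee x)/\sqrt n \leq \epsilon q^j \delta$ for every $j$, so the entire $\sup_{j\geq 1}$ term is $\leq 0$, and the leading $A(\delta)$ term is $(1+\epsilon)\psi(\delta)(1+\sqrt x \vee x)/\sqrt n \leq \epsilon\delta/q \leq \epsilon\delta$. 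Combining, $\xi_{n,\epsilon}(A) \leq \epsilon\delta_{n,\epsilon}(x)$ on the good event, and adding back $(1+\epsilon)\rho$ gives the claimed bound. Finally I take the maximum with $\delta_{\min}$ in the definition of $\delta_{n,\epsilon}(x)$ so that the constraint $\delta > \delta_{\min}$ needed for \eqref{eq:concentration} is respected (with a harmless passage to ``$\geq$'' or an infinitesimal enlargement).

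The only genuinely delicate point — and the one I would be most careful about — is the simultaneity of \eqref{eq:concentration} across the infinitely many annuli: one must either read \eqref{eq:concentration} as already holding uniformly in $\delta$ on a single event (which is what the chaining argument in Section~\ref{sec:chaining} delivers, since the bound there is monotone in the radius and a union over dyadic radii costs only a constant in $L$), or perform an explicit union bound with $x \mapsto x + j\ln q$ and reabsorb the extra $\sum_j e^{-j\ln q}$ into $L$. Everything else is the bookkeeping already displayed in the subsection; in particular the convexity/monotonicity hypotheses on $\psi$ are used precisely where stated, and no new estimate is required.
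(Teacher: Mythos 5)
Your proposal is correct and is essentially the paper's own proof: you reproduce the shift $P(\ell_{\bar\alpha_n})-P(\ell_{\alpha_*})\leq(1+\epsilon)\rho+\xi_{n,\epsilon}(A)$, the annulus decomposition, the inclusion $A_j(\delta)\subset A(q^{j+1}\delta)$ together with $\psi(q^{j+1}\delta)\leq q^{j+1}\psi(\delta)$, and the use of Lemma~\ref{lem:convex_trick} with the choice $\delta=\psi^{-1*}\bigl(2q(1+\epsilon)(1+\sqrt x\vee x)/(\epsilon\sqrt n)\bigr)$ to kill the $\sup_{j\geq1}$ term; the paper states this chain immediately before the proposition and simply closes with ``We have proved the following result.'' Your flag about the simultaneity of~\eqref{eq:concentration} over the countably many radii $q^{j+1}\delta$ is a fair and worthwhile observation: the paper does not spell this out, and as written~\eqref{eq:concentration} is for a fixed $\delta$, so a union bound (e.g.\ replacing $x$ by $x+j$ on the $j$-th annulus, which leaves the final rate of the same order after adjusting $L$ and the constant inside $\psi^{-1*}$) is indeed the honest repair, while your alternative reading of~\eqref{eq:concentration} as already uniform in $\delta$ would also suffice but is not literally what the later Theorem~\ref{thm:generic_chaining} delivers.
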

In the next section, we prove Inequality~\eqref{eq:concentration}
using the generic chaining mechanism, under an assumption on the
complexity of $A$.

\subsection{Generic chaining}
\label{sec:chaining}

The \emph{generic chaining} technique, which is introduced in
\cite{talagrand06} is, in our setting, a nice way to prove
\eqref{eq:concentration}. It is based on the $\gamma_\nu(A, d)$
functional (see below) which is an alternative to Dudley's entropy
integral (see \cite{dudley78} for instance). The idea is to decompose
$A$ using an approximating sequence of partitions, instead of nets
with decreasing radius, as this is done in the standard chaining
method. Let us briefly recall some necessary notions that can be found
in details in \cite{talagrand06}, Chapter~1.

Let $(A, d)$ be a metric space ($d$ can be a semi-distance). Denote by
$\Delta(A, d) := \sup_{a, b \in B} d(a, b)$ the diameter of $A$. An
\emph{admissible} sequence of $A$ is an increasing sequence $(\mathcal
A_j)_{j \geq 0}$ of partitions of $A$ (every set of $\mathcal A_{j+1}$
is included in a set of $\mathcal A_j$) such that $| \mathcal A_j |
\leq 2^{2^j}$ and $|\mathcal A_0| = 1$. If $a \in A$, we denote by
$A_j(a)$ the unique element of $\mathcal A_j$ that contains $a$. For
$\nu > 0$, define the function
\begin{equation}
  \label{eq:gamma_function}
  \gamma_\nu(A, d) := \inf \sup_{a \in A} \sum_{j \geq 0} 2^{j / \nu}
  \Delta( A_j(a), d )
\end{equation}
where the infimum is taken among all admissible sequence of $A$. This
quantity is an alternative (and an improvement, see
\cite{talagrand06}, in particular Theorem~3.3.2) of the Dudley's
entropy integral (see for instance
\cite{van_der_vaart_wellner96}). Indeed, we have:
\begin{equation}
  \label{eq:gamma_dudley}
  \gamma_\nu(A, d) \leq L \int_0^{\Delta(A, d)} ( \log N(A,
  \varepsilon, d) )^{1 / \nu} \rm d \varepsilon,
\end{equation}
where 
$N(A, \varepsilon, d)$ is the \emph{covering number} of $A$, namely
the smallest integer $N$ such that there is $B \subset A$ satisfying
$|B| \leq N$ and $d(a, B) \leq \varepsilon$ for any $a \in
A$. 

Introduce $d_2(a, b) := \norm{a - b}$ where $\norm{\cdot}$ is the
semi-norm given by~\eqref{eq:norm} and $d_\infty(a, b) = \norm{a -
  b}_\infty$, where $\norm{\cdot}_\infty$ is the uniform norm
\eqref{eq:sup_norm}. Using the generic chaining argument, we obtain
the following deviation inequality.
\begin{theorem}
  \label{thm:generic_chaining}
  Grant Assumptions~\ref{ass:support} and~\ref{ass:countable}. For any
  $x > 0$\textup, we have
  \begin{equation}
    \label{eq:general_maximal}
    \sup_{\alpha \in A} \sqrt n (P - P_n)(\ell_\alpha - \ell_{\alpha_*})
    \leq c \Big( \gamma_2(A, d_2) (1 + \sqrt x) + \gamma_1(A,
    d_\infty) \frac{1 + x}{ \sqrt{n} } \Big)
  \end{equation}
  with a probability larger than $1 - L e^{-x}$\textup, where $c =
  c_{b, \norm{\alpha_0}_\infty} = 4(b + \norm{\alpha_0}_\infty) + 2
  ([\sqrt 2(\sqrt 2 + 1) \norm{\alpha_0}_\infty]^{1/2} + 1)$
  \textup(and $L \approx 1.545433$\textup).
\end{theorem}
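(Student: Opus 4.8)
The plan is to apply the generic chaining machinery of Talagrand to the process $\alpha \mapsto \sqrt{n}(P-P_n)(\ell_\alpha - \ell_{\alpha_*})$. First I would rewrite this process in a workable form. By the decomposition~\eqref{eq:empirical_norm_risk} (applied with $\alpha_0$ replaced by a reference point, or just expanding $\ell_\alpha - \ell_{\alpha_*}$ directly), one sees that $\sqrt{n}(P-P_n)(\ell_\alpha - \ell_{\alpha_*})$ splits into two pieces: a \emph{martingale part}, essentially $2 Z_n(\alpha - \alpha_*)$ with $Z_n$ from~\eqref{eq:empirical_process_training}, and an \emph{empirical-process part} coming from the difference $\norm{\cdot}_n^2 - \norm{\cdot}^2$ of the squared terms, which is a centered sum of i.i.d.\ bounded variables indexed by $\alpha$. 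The key structural fact is that for the increments $\alpha \mapsto \alpha'$ both pieces enjoy a Bernstein-type (mixed sub-Gaussian / sub-exponential) tail: from~\eqref{eq:bernstein2} and Lemma~\ref{lem:tricky_deviation} the martingale increment $Z_n((\alpha-\alpha_*)-(\alpha'-\alpha_*)) = Z_n(\alpha - \alpha')$ has a tail governed by $d_2(\alpha,\alpha')=\norm{\alpha-\alpha'}$ at the sub-Gaussian scale and by $d_\infty(\alpha,\alpha')=\norm{\alpha-\alpha'}_\infty/\sqrt n$ at the sub-exponential scale; the i.i.d.\ part satisfies the analogous bound via~\eqref{eq:bernstein_norm}, using that $\norm{\alpha^2 - \alpha'^2} \le (2b)\norm{\alpha-\alpha'}$ and $\norm{\alpha^2-\alpha'^2}_\infty \le (2b)\norm{\alpha-\alpha'}_\infty$ since $\norm{\cdot}_\infty \le b$ on $A$ by Assumption~\ref{ass:countable}.

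Next I would run the generic chaining argument of \cite{talagrand06}, Chapter~1–2, in its "two-metric" / Bernstein form. Fix two admissible sequences $(\mathcal A_j)$ and $(\mathcal B_j)$ of $A$ that are near-optimal for $\gamma_2(A,d_2)$ and $\gamma_1(A,d_\infty)$ respectively; refine to a common admissible sequence (this only costs an absolute constant, replacing $2^{2^j}$ by $2^{2^{j+1}}$). Pick for each $j$ a point $\pi_j(a)$ in the cell $A_j(a)$, chain along $\alpha - \alpha_* = \sum_{j\ge 1}(\pi_j(\alpha) - \pi_{j-1}(\alpha))$, and for each chaining link apply the Bernstein increment bound with deviation level $x + j\log 2 \cdot 2^j$ (the standard union-bound budget over the at most $2^{2^j}\cdot 2^{2^{j-1}}$ links at level $j$). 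Summing the sub-Gaussian contributions yields a term of order $\sum_j 2^{j/2}\Delta(A_j(\alpha),d_2) \lesssim \gamma_2(A,d_2)$ multiplied by $(1+\sqrt x)$; summing the sub-exponential contributions yields a term of order $\sum_j 2^{j}\Delta(A_j(\alpha),d_\infty)/\sqrt n \lesssim \gamma_1(A,d_\infty)(1+x)/\sqrt n$. Doing this for both the martingale part and the i.i.d.\ part and adding them gives exactly the right-hand side of~\eqref{eq:general_maximal}, with the constant $c$ tracked through the two Bernstein constants (the $2([\sqrt2(\sqrt2+1)\norm{\alpha_0}_\infty]^{1/2}+1)$ coming from Lemma~\ref{lem:tricky_deviation} for the martingale part, the $4(b+\norm{\alpha_0}_\infty)$ from combining the variance proxy $\norm{\alpha_0}_\infty$ of $\langle Z_n\rangle$ in~\eqref{eq:variation_norm} with the $2b$ Lipschitz factor of the squared terms). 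Assumption~\ref{ass:support} is what makes $\mu$ finite and $\langle Z_n(\alpha)\rangle \le \norm{\alpha_0}_\infty\norm{\alpha}_n^2$, and Assumption~\ref{ass:countable} both makes the supremum measurable (restrict to the countable $A'$, which is $d_\infty$-dense for the purposes of continuity of $\alpha\mapsto P_n(\ell_\alpha)$) and supplies the uniform bound $b$.

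I expect the main obstacle to be the bookkeeping needed to get a \emph{single} exponent $e^{-x}$ (rather than $e^{-x}$ up to an additive $\sum_j e^{-2^j}$ remainder) and a clean absolute constant $L\approx 1.545433$: one must carefully choose the per-level deviation budget so that the tail series telescopes, and handle the $\sqrt x \vee x$ versus $(1+\sqrt x)$ and $(1+x)$ normalizations consistently between the two regimes. A secondary subtlety is that $\alpha_*$ may itself depend on $D_n$ only through $A$ (it does not — it is the population minimizer), so the recentering at $\alpha_*$ is harmless; but one must make sure the increment bounds are applied to $\alpha - \alpha'$ and never to $\alpha - \alpha_*$ directly, since only diameters of chaining cells — not distances to $\alpha_*$ — appear in $\gamma_\nu$. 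Apart from these constant-chasing points, the argument is a fairly direct transcription of Talagrand's generic chaining under a Bernstein increment condition, the only model-specific inputs being the martingale Bernstein inequality~\eqref{eq:bernstein2} and Lemma~\ref{lem:tricky_deviation}.
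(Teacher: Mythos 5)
Your proposal follows essentially the same route as the paper: the same decomposition of $\ell_\alpha$ into a martingale part $Z_n(\alpha_*-\alpha)$ and an i.i.d.\ empirical-process part $\ell_\alpha'$, the same Bernstein-type increment bounds (Lemma~\ref{lem:tricky_deviation} for the martingale, classical Bernstein for the i.i.d.\ part), and the same two-metric generic chaining with a merged admissible sequence and $\pi_0(\alpha)=\alpha_*$. One small bookkeeping inaccuracy: the i.i.d.\ part is $(P-P_n)(\ell_\alpha'-\ell_{\alpha_*}')$ where $\ell_\alpha'$ contains not only the $\alpha^2$ term but also the cross term $-2\int_0^1\alpha\alpha_0\,y\,dt$, so the relevant Lipschitz constant is $2(b+\norm{\alpha_0}_\infty)$ rather than $2b$ — this, not the variance proxy of $\langle Z_n\rangle$, is where the $\norm{\alpha_0}_\infty$ in $4(b+\norm{\alpha_0}_\infty)$ comes from.
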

The proof of Theorem~\ref{thm:generic_chaining} is given in
Section~\ref{sec:main_proofs} below. In~\eqref{eq:general_maximal},
the function $\gamma_2$ is related to the subgaussian term of the
Bernstein inequality~\eqref{eq:bernstein2}, while $\gamma_1$ is
related to the subexponential term. However, if we have an extra
condition on the complexity of $A$, it is possible to ``remove'' the
$\gamma_1$ term from~\eqref{eq:general_maximal}. This is called the
\emph{adaptive truncation argument}, which is related to the use of
brackets (instead of balls) to construct a covering of $A$.

\subsection{Brackets}

Entropy with bracketing has been introduced by \cite{dudley78}. The
adaptive truncation argument was introduced by \cite{bass85} for
partial sum process and \cite{ossiander87} for the empirical
process. We refer to \cite{van_de_geer00} (in particular the proof of
Theorem~8.13) herein and \cite{massart03} (see the proof of
Theorem~6.8) for the use of this technique with statistical
applications in mind. In the context of generic chaining, bracketing
can defined as follows. Following \cite{talagrand06} (see in
particular Theorem~2.7.10), we consider
\begin{equation}
  \label{eq:gamma_bracket}
  \gamma^{[]}(A) := \inf \sup_{a \in A} \sum_{j \geq 0} 2^{j/2}
  \norm{\mathscr B_{A_j(a)}},
\end{equation}
where the infimum is taken among all admissible sequences of $A$,
where we recall that $\norm{\cdot}$ is defined by~\eqref{eq:norm}, and
where
\begin{equation*}
  \mathscr B_A(z) := \sup_{a, a' \in A} | a(z) - a'(z)|
\end{equation*}
for any $z \in [0, 1]^{d+1}$. If $a^L, a^U \in A$, the \emph{bracket}
$[a^L, a^U]$ is the band
\begin{equation*}
  [a^L, a^U] := \{ a \in A : a^L \leq a \leq a^U \text{ pointwise} \}.
\end{equation*}
The quantity $\norm{a^U - a^L}$ is the \emph{diameter} of the
bracket. We denote by $N^{[]}(A, \epsilon)$ the minimal number of
brackets with diameter not larger than $\epsilon$ necessary to cover
$A$. Analogously to~\eqref{eq:gamma_dudley}, one has
\begin{equation}
  \label{eq:gamma_bracket_dudley}
  \gamma^{[]}(A) \leq L \int_0^{\Delta(A, d_\infty)} (\log N^{[]}(A,
  \epsilon))^{1/2} \mathrm d \epsilon.
\end{equation}
Entropy with bracketing is a refinement of $\L^\infty$-entropy, that
can be suitable for some class of functions, for instance functions
with uniformly bounded variation, see for instance \cite{vdg93} and
\cite{blm99}. In our setting, it is useful to ``remove'' the
$\gamma_1$ term from~\eqref{eq:general_maximal}, thanks to the
following result, which is Talagrand's version of the adaptive
truncation argument.
\begin{theorem}[\cite{talagrand06}, Theorem~2.7.11]
  \label{thm:tala_bracket}
  Let $A$ be a countable set of measurable functions, and let $u >
  0$. If $\gamma^{[]}(A) \leq \Gamma$\textup, we can find two sets
  $A_1, A_2$ with the following properties\textup:
  \begin{itemize}
  \item $\gamma_2(A_1, d_2) \leq L \Gamma, \quad \gamma_1(A_1,
    d_\infty) \leq L u \Gamma,$
  \item $\gamma_2(A_2, d_2) \leq L \Gamma, \quad \gamma_1(A_2,
    d_\infty) \leq L u \Gamma,$
  \item for any $a \in A_2,$ we have $a \geq 0$ and $\norm{a}_1 \leq L
    \Gamma / u,$ and
    \begin{equation*}
      A \subset A_1 + A_2', \text{ where } A_2' = \{ a' : \exists a \in
      A_2, |a'| \leq a \}.
    \end{equation*}  
  \end{itemize}
\end{theorem}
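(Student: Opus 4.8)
Since the statement is verbatim Theorem~2.7.11 of \cite{talagrand06}, the cleanest option is simply to quote it; for completeness I would reproduce Talagrand's \emph{adaptive truncation} argument, organized as follows. Fix an admissible sequence $(\mathcal A_j)_{j\ge0}$ of $A$ that is nearly optimal for~\eqref{eq:gamma_bracket}, so that $\sup_{a\in A}\sum_{j\ge0}2^{j/2}\norm{\mathscr B_{A_j(a)}}\le 2\Gamma$; abbreviate $b_j(a):=\norm{\mathscr B_{A_j(a)}}$, so $\sum_{j\ge0}2^{j/2}b_j(a)\le2\Gamma$ for every $a$. Pick, consistently along the partitions, representatives $c_j(a)\in A_j(a)$ (so $c_j(a)$ depends only on the cell $A_j(a)$); since $b_j(a)\to0$, the telescoping series $a=c_0+\sum_{j\ge1}\bigl(c_j(a)-c_{j-1}(a)\bigr)$ converges in $\L^2(\mu)$, and each increment obeys $|c_j(a)-c_{j-1}(a)|\le\mathscr B_{A_{j-1}(a)}$ pointwise because both endpoints lie in the cell $A_{j-1}(a)$.

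Now comes the truncation. Put the (cell-constant) threshold $\tau_j(a):=u\,2^{-(j-1)/2}b_{j-1}(a)$ and split the $j$-th increment as $c_j(a)-c_{j-1}(a)=s_j(a)+\ell_j(a)$, where $s_j(a):=\bigl(c_j(a)-c_{j-1}(a)\bigr)\ind{\mathscr B_{A_{j-1}(a)}\le\tau_j(a)}$ and $\ell_j(a)$ is the complement. By construction $\norm{s_j(a)}_\infty\le\tau_j(a)$ and $\norm{s_j(a)}\le b_{j-1}(a)$, while Markov's inequality (using $\mu([0,1]^{d+1})\le1$) gives $\norm{\ell_j(a)}_1\le b_{j-1}(a)^2/\tau_j(a)=u^{-1}2^{(j-1)/2}b_{j-1}(a)$. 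Define $A_1:=\{\,c_0+\sum_{j\ge1}s_j(a):a\in A\,\}$ and $A_2:=\{\,\sum_{j\ge1}\mathscr B_{A_{j-1}(a)}\ind{\mathscr B_{A_{j-1}(a)}>\tau_j(a)}:a\in A\,\}$. Then $a=\bigl(c_0+\sum_js_j(a)\bigr)+\sum_j\ell_j(a)$ with $\bigl|\sum_j\ell_j(a)\bigr|\le\sum_j\mathscr B_{A_{j-1}(a)}\ind{\mathscr B_{A_{j-1}(a)}>\tau_j(a)}$, so $A\subset A_1+A_2'$; every element of $A_2$ is nonnegative; and the $\norm{\cdot}_1$ norm of a generic element of $A_2$ is at most $u^{-1}\sum_{j\ge1}2^{(j-1)/2}b_{j-1}(a)\le2\Gamma/u$, which is the required $\L^1$ bound.

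For the $\gamma$-functionals one transports the partitions $(\mathcal A_j)$ to $A_1$ and $A_2$ by push-forward (these remain admissible since $|\mathcal A_j|\le2^{2^j}$). Two chains in a common cell $A_j(a)$ share their first $j$ truncated increments, because the data defining $s_k$ (respectively the $A_2$-blocks) for $k\le j$ is cell-constant at level $j$; hence the $d_2$-diameter of that cell inside $A_1$ is controlled by the tail $\sum_{k>j}\norm{s_k(\cdot)}\le\sum_{k>j}b_{k-1}(\cdot)$ and its $d_\infty$-diameter by $\sum_{k>j}\tau_k(\cdot)$. Substituting $\tau_k(a)=u\,2^{-(k-1)/2}b_{k-1}(a)$ and swapping the order of summation via $\sum_{0\le j<k}2^{j/\nu}\le L\,2^{k/\nu}$ converts these tails into $\sum_k2^{k/2}b_{k-1}(a)\le L\Gamma$ and $u\sum_k2^{(k+1)/2}b_{k-1}(a)\le Lu\sum_k2^{k/2}b_k(a)\le Lu\Gamma$, giving $\gamma_2(A_1,d_2)\le L\Gamma$ and $\gamma_1(A_1,d_\infty)\le Lu\Gamma$; the same estimates apply to $A_2$, whose $j$-th block carries the identical $\L^2$ bound.

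The genuinely delicate point — and where I expect the bulk of the work to lie — is this last step: the tails $\sum_{k>j}\norm{s_k(b)}$ (and, worse, the $d_\infty$-analogue for the $A_2$-blocks, whose sup-norms do \emph{not} a priori shrink) must be bounded uniformly over all $b$ in the cell $A_j(a)$, not merely at the reference chain, and a crude uniform bound destroys the geometric decay in $j$. Talagrand resolves this by refining the push-forward partitions according to the truncation thresholds, so that large increments are confined to deep cells; I would follow his bookkeeping in the proof of \cite{talagrand06}, Theorem~2.7.11, rather than reconstruct it, the remaining estimates being exactly the routine Markov / Cauchy--Schwarz and geometric-series computations indicated above.
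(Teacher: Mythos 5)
The paper states this as Talagrand's Theorem~2.7.11 and gives no proof of its own; citation is the paper's entire ``proof,'' and you correctly identify that as the cleanest route. Your sketch of the adaptive-truncation argument is a faithful and essentially correct outline of what Talagrand does --- telescoping along a near-optimal admissible sequence, truncating each increment at a cell-dependent threshold $\tau_j$, using Markov to bound the $\L^1$ mass of the excess, and push-forwarding the partitions --- and you honestly flag the genuinely hard step (uniform control of the tails over a cell, especially the $d_\infty$-diameters for $A_2$, which forces a further refinement of the partitions) rather than pretending the crude estimate closes. Since the paper does not attempt to reproduce this, there is nothing to compare beyond noting that your recommendation to quote Talagrand coincides with what the authors actually do.
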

Indeed, an immediate consequence of
Proposition~\ref{thm:generic_chaining} and
Theorem~\ref{thm:tala_bracket} (simply take $u = \sqrt n$ in
Theorem~\ref{thm:tala_bracket}) is the following.
\begin{corollary}
  \label{cor:maximal_bracket}
  Grant Assumptions~\ref{ass:support} and~\ref{ass:countable}. For any
  $x > 0$\textup, we have
  \begin{equation*}
    \sup_{\alpha \in A} \sqrt n (P - P_n)(\ell_\alpha -
    \ell_{\alpha_*}) \leq c \gamma^{[]}(A) (1 + \sqrt x \vee x)
  \end{equation*}
  with a probability larger than $1 - L e^{-x}$\textup, where $c =
  c_{b, \norm{\alpha_0}_\infty}$ is the same as in
  Theorem~\ref{thm:generic_chaining}.
\end{corollary}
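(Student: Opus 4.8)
The plan is to obtain Corollary~\ref{cor:maximal_bracket} by combining Theorem~\ref{thm:generic_chaining} with Talagrand's decomposition, Theorem~\ref{thm:tala_bracket}, applied with the particular choice $u = \sqrt n$. Set $\Gamma := \gamma^{[]}(A)$. By Theorem~\ref{thm:tala_bracket} we obtain sets $A_1, A_2$ with $\gamma_2(A_i, d_2) \leq L \Gamma$ and $\gamma_1(A_i, d_\infty) \leq L \sqrt n\, \Gamma$ for $i = 1, 2$, together with $A \subset A_1 + A_2'$, where every element of $A_2'$ is dominated pointwise by some $a \in A_2$ with $a \geq 0$ and $\norm{a}_1 \leq L \Gamma / \sqrt n$. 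The strategy is to control $(P - P_n)(\ell_\alpha - \ell_{\alpha_*})$ by writing $\alpha = \alpha_1 + \alpha_2'$ with $\alpha_1 \in A_1$, $\alpha_2' \in A_2'$, and to bound the contribution of each piece separately.

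First I would handle the $A_1$ part. Applying Theorem~\ref{thm:generic_chaining} to the countable set $A_1$ (we may assume $\alpha_* \in A_1$ up to adding it, or absorb the relevant constant), we get that with probability at least $1 - L e^{-x}$,
\begin{equation*}
  \sup_{\alpha_1 \in A_1} \sqrt n (P - P_n)(\ell_{\alpha_1} - \ell_{\alpha_*})
  \leq c \Big( L \Gamma (1 + \sqrt x) + L \sqrt n\, \Gamma \frac{1 + x}{\sqrt n} \Big)
  \leq c' \Gamma (1 + \sqrt x \vee x),
\end{equation*}
so the $\gamma_1$ term, which carries the extra factor $\sqrt n$, is exactly compensated by the $1/\sqrt n$ in front of it, leaving a clean bound of the right order. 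This is the point of the choice $u = \sqrt n$.

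The main obstacle is the $A_2'$ part: here I would use linearity of $\alpha \mapsto \ell_\alpha$ in a way that is only linear up to the quadratic term, so one must be a bit careful expanding $\ell_{\alpha_1 + \alpha_2'} - \ell_{\alpha_*}$ and collecting cross terms; but the essential estimate is that for $\alpha_2' \in A_2'$, dominated by some nonnegative $a \in A_2$ with $\norm{a}_1 \leq L \Gamma / \sqrt n$, the quantity $(P - P_n)$ applied to the $\alpha_2'$-pieces is controlled in $\L^1$-type norm: since $\norm{a}_1$ is small, the deterministic part $P$ of these terms is $O(\Gamma/\sqrt n)$ uniformly, and the empirical part $P_n$ is controlled by a Bernstein-type bound (one can reuse the martingale deviation inequalities of Section~\ref{sec:construction} together with the $\L^1$ bound and the uniform bound $\norm{a}_\infty \leq b + L\Gamma$), yielding again a term of order $\Gamma(1 + \sqrt x \vee x)$. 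One also invokes $\gamma_2(A_2, d_2) \leq L \Gamma$ and Theorem~\ref{thm:generic_chaining} on $A_2$ to control the part of $A_2'$ that is not absorbed by the $\L^1$ domination. Summing the $A_1$ and $A_2'$ contributions, renaming constants, and noting that $c$ can be taken equal to $c_{b, \norm{\alpha_0}_\infty}$ from Theorem~\ref{thm:generic_chaining} (up to the absolute constants from Theorem~\ref{thm:tala_bracket}, which are folded into $L$), gives the claimed inequality with probability at least $1 - L e^{-x}$. The delicate bookkeeping is ensuring the truncation term does not reintroduce a genuinely subexponential (non-$\sqrt x$) contribution of larger order — this is precisely where the bound $\norm{a}_1 \leq L\Gamma/u$ with $u = \sqrt n$ is used.
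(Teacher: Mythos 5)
Your proposal follows exactly the route the paper has in mind: the paper dispatches this corollary in a single sentence by invoking Theorem~\ref{thm:tala_bracket} with $u = \sqrt n$ and then Theorem~\ref{thm:generic_chaining}, which is precisely your split into $A_1$ (controlled by the two $\gamma$-functionals, with $\gamma_1(A_1,d_\infty)/\sqrt n \leq L\Gamma$) and $A_2'$ (controlled by the $\L^1$ domination $\norm{a}_1 \leq L\Gamma/\sqrt n$). You are right that the non-linearity of $\alpha \mapsto \ell_\alpha$ makes the decomposition of $(P-P_n)(\ell_{\alpha_1+\alpha_2'}-\ell_{\alpha_*})$ into $\alpha_1$- and $\alpha_2'$-pieces less than literal, and that this demands cross-term bookkeeping; the paper treats this as immediate and does not spell it out either, so your elaboration is consistent with, and in fact more explicit than, the paper's own one-line justification.
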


\subsection{A risk bound for the ERM}
\label{sec:erm_risk_bound}

Corollary \ref{cor:maximal_bracket} is close to the concentration
inequality~\eqref{eq:concentration} required in the peeling argument,
see Proposition~\ref{prop:peeling} above. However, note that the
peeling was done using sets $A(\delta) = \{ \alpha \in A :
P(\ell_\alpha) - P(\ell_{\alpha_*}) \leq \delta\}$ for $\delta > 0$,
while we can bound from above the entropy (and consequently the
functionals $\gamma$ and $\gamma^{[]}$) of balls $B(\delta) = \{
\alpha \in A : \norm{\alpha - \alpha_*} \leq \delta\}$ using a
standard result (see Section~\ref{sec:A_finite_dimension}). Hence, it
will be convenient to work under the following assumption.
\begin{assumption}
  \label{ass:margin}
  Assume that $\norm{\alpha - \alpha_*}^2 \leq P(\ell_\alpha) -
  P(\ell_{\alpha_*})$ for every $\alpha \in A$.
\end{assumption}
This assumption is a bit stronger than the standard \emph{margin
  assumption}, see \cite{mammen_tsybakov99, tsybakov04}, or the
\emph{$\beta$-Bernstein condition}, see \cite{bartlett_mendelson06}
[Note that here $\beta = 1$, as in most statistical models, see
\cite{these_bobby}.] Indeed, let us prove that
Assumption~\ref{ass:margin} entails, together with
Assumptions~\ref{ass:support} and~\ref{ass:countable}:
\begin{equation}
  \label{eq:bernstein_condition_BM}
  P( (\ell_\alpha - \ell_{\alpha_*})^2 )  \leq c P(\ell_\alpha -
  \ell_{\alpha_*}) \; \text{ for every } \; \alpha \in A,
\end{equation}
where $c = c_{b, \norm{\alpha_0}_\infty} := 8 ((b +
\norm{\alpha_0}_\infty)^2 + \norm{\alpha_0}_\infty)$, which is the
$(1, c)-$Bernstein condition from~\cite{bartlett_mendelson06}. We have
using~\eqref{eq:model}:
\begin{align*}
  \ell_{\alpha}(X, (Y_t), (N_t)) = \ell_\alpha'(X, (Y_t)) - 2 \int_0^1
  \alpha(t, X) d M(t),
\end{align*}
where $\ell_\alpha'$ is the loss function
\begin{align*}
  \ell_\alpha'(x, (y_t)) := \int_0^1 \alpha(t, x)^2 y(t) dt - 2
  \int_0^1 \alpha(t, x) \alpha_0(t, x) y(t) dt,
\end{align*}
so the following decomposition holds:
\begin{align*}
  \ell_\alpha(X, (Y_t), &(N_t)) - \ell_{\alpha_*}(X, (Y_t), (N_t)) \\
  &= \ell_\alpha'(X, (Y_t)) - \ell_{\alpha_*}'(X, (Y_t)) + 2 \int_0^1
  (\alpha_*(t, X) - \alpha(t, X)) dM(t) \\
  &= \int_0^1 ( \alpha(t, X) - \alpha_*(t, X)) (\alpha(t, X) +
  \alpha_*(t, X) - 2 \alpha_0(t, X) ) Y(t) dt \\
  &+ 2 \int_0^1 (\alpha_*(t, X) - \alpha(t, X)) dM(t).
\end{align*}
Hence, using Assumptions~\ref{ass:support} and~\ref{ass:countable}, we
have:
\begin{align*}
  P( (\ell_\alpha - \ell_{\alpha_*})^2 ) &\leq 8 (b +
  \norm{\alpha_0}_\infty)^2 \norm{\alpha - \alpha_*}^2 \\
  &+ 8 \E\Big[ \int_0^1 (\alpha_*(t, X) -
  \alpha(t, X))^2 \alpha_0(t, X) Y(t) dt \Big] \\
  &\leq 8 ((b + \norm{\alpha_0}_\infty)^2 + \norm{\alpha_0}_\infty)
  \norm{\alpha - \alpha_*}^2,
\end{align*}
and \eqref{eq:bernstein_condition_BM} follows using
Assumption~\ref{ass:margin}. Now, let us show that
Assumption~\ref{ass:margin} is mild: it is met when $A$ is convex, for
instance. The fact that convexity entails the margin assumption is
true in most statistical models, such a in regression, see for
instance \cite{lee_bartlett_williamson98}.
  \begin{lemma}
    \label{lem:margin}
    Grant Assumption~\ref{ass:support} and let $A$ be a convex class
    of functions bounded by $b > 0$. Then\textup,
    Assumption~\ref{ass:margin} is met.
  \end{lemma}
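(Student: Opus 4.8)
The plan is to show that when $A$ is convex, the excess risk $P(\ell_\alpha) - P(\ell_{\alpha_*})$ dominates $\norm{\alpha - \alpha_*}^2$ by exploiting the fact that $\alpha_*$ is a minimizer of the convex functional $\alpha \mapsto P(\ell_\alpha)$ over the convex set $A$, hence satisfies a first-order (variational) inequality. First I would recall from \eqref{eq:norm_risk} (applied with the target $\alpha_0$ replaced by the projection structure, or directly by expanding) that for any deterministic $\alpha$, $P(\ell_\alpha) = \norm{\alpha}^2 - 2\prodsca{\alpha}{\alpha_0}$, so that $\alpha \mapsto P(\ell_\alpha)$ is a quadratic functional on $\L^2(\mu)$ with Hessian $2\,\mathrm{Id}$; equivalently $P(\ell_\alpha) - P(\ell_{\alpha_*}) = \norm{\alpha - \alpha_*}^2 + 2\prodsca{\alpha - \alpha_*}{\alpha_* - \alpha_0}$.

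The key step is then the variational inequality: since $\alpha_*$ minimizes $P(\ell_\cdot)$ over the convex set $A$, for any $\alpha \in A$ the segment $(1-t)\alpha_* + t\alpha$ lies in $A$ for $t \in [0,1]$, and differentiating $t \mapsto P(\ell_{(1-t)\alpha_* + t\alpha})$ at $t = 0^+$ gives $\frac{d}{dt}\big|_{t=0^+} = 2\prodsca{\alpha - \alpha_*}{\alpha_* - \alpha_0} \geq 0$. Plugging this into the identity above yields $P(\ell_\alpha) - P(\ell_{\alpha_*}) \geq \norm{\alpha - \alpha_*}^2$, which is exactly Assumption \ref{ass:margin}. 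One must be slightly careful that $\alpha_*$ need only satisfy $P(\ell_{\alpha_*}) \leq \inf_A P(\ell_\cdot) + \rho$ if no exact minimizer exists; in that case the variational inequality picks up an $O(\rho)$ slack, but since the statement of the lemma presumes the clean minimizer exists (and one can always let $\rho \to 0$), this is not an obstacle, merely a remark. Assumption \ref{ass:support} enters only to guarantee that all the relevant integrals and the inner product $\prodsca{\cdot}{\cdot}$ are well-defined and finite (via $\norm{\alpha_0}_\infty < \infty$ and $A$ bounded by $b$), so that the quadratic expansion is legitimate.

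The main obstacle — really the only subtlety — is justifying the interchange of differentiation and the expectation/integral defining $P(\ell_\cdot)$ when computing the directional derivative along the segment, and ensuring the conditioning on $D_n$ plays no role here (it does not, since in this lemma $\alpha, \alpha_*$ are deterministic elements of $A$ and the $dM(t)$ term is centered, exactly as in the derivation of \eqref{eq:norm_risk}). Because the functional is an explicit quadratic in $t$, the derivative is elementary and dominated convergence applies trivially under Assumption \ref{ass:support}; so the argument is short. I would write it as: expand the quadratic, invoke convexity to get the first-order condition, and conclude.
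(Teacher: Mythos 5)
Your proposal is correct and follows essentially the same route as the paper: expand $P(\ell_\alpha)-P(\ell_{\alpha_*})=\norm{\alpha-\alpha_*}^2+2\prodsca{\alpha-\alpha_*}{\alpha_*-\alpha_0}$ using~\eqref{eq:norm_risk}, then invoke the first-order optimality (variational) inequality at $\alpha_*$ in the convex set $A$ to conclude $\prodsca{\alpha-\alpha_*}{\alpha_*-\alpha_0}\ge 0$. The only difference is that you spell out the directional-derivative justification of the variational inequality, which the paper states directly; both are the same argument.
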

  \begin{proof}
    Since $A$ is convex and $P(\alpha_*) = \inf_{\alpha \in A}
    P(\ell_\alpha)$, we have $\prodsca{\alpha_* - \alpha_0}{\alpha_*,
      \alpha} \leq 0$ for any $\alpha \in A$, where we recall that the
    inner product is given by~\eqref{eq:norm}. This entails
    \begin{align*}
      P( \ell_\alpha - \ell_{\alpha_*} ) &= \norm{\alpha}^2 - 2
      \prodsca{\alpha}{\alpha_0} - \norm{\alpha_*}^2 + 2
      \prodsca{\alpha_*}{\alpha_0} \\
      &= \norm{\alpha - \alpha_*}^2 - 2 \prodsca{\alpha_* -
        \alpha_0}{\alpha_* - \alpha} \\
      &\geq \norm{\alpha - \alpha_*}^2. \qedhere
  \end{align*}
\end{proof}
We are now in position to state the following risk bound for the ERM,
under a condition on the complexity of $A$.
\begin{theorem}
  \label{thm:ERM_bound}
  Grant Assumptions~\ref{ass:support},~\ref{ass:countable}
  and~\ref{ass:margin}. Assume that there is $\delta_{\min} > 0$ and a
  continuous and increasing function $\varphi : \mathbb R^+
  \rightarrow \mathbb R^+$ such that for any $\delta >
  \delta_{\min}$\textup, any $\alpha' \in A$ and any $\sqrt
  \delta$-ball $B(\sqrt \delta) = \{ \alpha \in A : \norm{\alpha -
    \alpha'}^2 \leq \delta \}$\textup, we have either\textup:
  \begin{equation*}
    \varphi(\delta) \geq \gamma_2(B(\sqrt \delta), d_2) + \frac{1}{
      \sqrt{n} } \gamma_1(B(\sqrt \delta), d_\infty) \; \text{ for any
    } \; \delta > \delta_{\min},
  \end{equation*}
    or\textup:
    \begin{equation*}
      \varphi(\delta) \geq \gamma^{[]}(B(\sqrt \delta)) \; \text{ for any
      } \; \delta > \delta_{\min}.
    \end{equation*}
    Assume further that $\varphi^{-1}$ is strictly convex and that
    $\delta \mapsto \varphi(\delta) / \delta$ is decreasing. Then, if
    $\bar \alpha_n$ is a $\rho$-ERM according to
    Definition~\ref{def:erm}\textup, we have for any $\epsilon > 0,$
    $x > 0$\textup:
    \begin{equation*}
      P(\ell_{\bar \alpha_n}) \leq P(\ell_{\alpha_*}) + (1 + \epsilon)
      \rho + \epsilon \delta_{n, \epsilon}(x)
    \end{equation*}
    with a probability larger than $1 - L e^{-x}$\textup, where
    \begin{equation*}
      \delta_{n, \epsilon}(x) := \varphi^{-1*} \Big( \frac{c
        (1+\epsilon) (1 + \sqrt x \vee x)}{\epsilon \sqrt n} \Big)
      \vee \delta_{\min},
    \end{equation*}  
    and $c = c_{b, \norm{\alpha_0}_\infty}$ is the same as in
    Theorem~\ref{thm:generic_chaining}.
  \end{theorem}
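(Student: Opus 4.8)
The plan is to combine the peeling argument of Proposition~\ref{prop:peeling} with the chaining bounds of Theorem~\ref{thm:generic_chaining} and Corollary~\ref{cor:maximal_bracket}, the only real work being to verify that the hypothesis~\eqref{eq:concentration} of the peeling proposition holds with a function $\psi$ built from $\varphi$. First I would fix $\delta > \delta_{\min}$ and look at the set $A(\delta) = \{ \alpha \in A : P(\ell_\alpha) - P(\ell_{\alpha_*}) \leq \delta \}$ appearing in the peeling. By Assumption~\ref{ass:margin}, $\norm{\alpha - \alpha_*}^2 \leq P(\ell_\alpha) - P(\ell_{\alpha_*}) \leq \delta$ for every $\alpha \in A(\delta)$, so $A(\delta) \subset B(\sqrt\delta)$ where $B(\sqrt\delta) = \{ \alpha \in A : \norm{\alpha - \alpha_*}^2 \leq \delta \}$ is the $\L^2(\mu)$-ball centered at the oracle $\alpha_* \in A$. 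Monotonicity of $\gamma_2$, $\gamma_1$ and $\gamma^{[]}$ under inclusion of sets then gives, in the first alternative, $\gamma_2(A(\delta), d_2) + n^{-1/2} \gamma_1(A(\delta), d_\infty) \leq \varphi(\delta)$, and in the second alternative $\gamma^{[]}(A(\delta)) \leq \varphi(\delta)$.

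Next I would plug $A(\delta)$ into Theorem~\ref{thm:generic_chaining} (respectively Corollary~\ref{cor:maximal_bracket}) in place of the generic set $A$ there, with the same oracle $\alpha_*$; Assumptions~\ref{ass:support} and~\ref{ass:countable} are in force, and $A(\delta)$ inherits the countability property from $A$. In the first case this yields
\begin{equation*}
  \sup_{\alpha \in A(\delta)} \sqrt n (P - P_n)(\ell_\alpha - \ell_{\alpha_*})
  \leq c \Big( \gamma_2(A(\delta), d_2)(1 + \sqrt x) + \gamma_1(A(\delta), d_\infty) \frac{1 + x}{\sqrt n} \Big)
  \leq c \varphi(\delta) (1 + \sqrt x \vee x)
\end{equation*}
with probability at least $1 - L e^{-x}$, after bounding $1 + \sqrt x \leq 1 + \sqrt x \vee x$ and $(1+x)/\sqrt n \leq 1 + \sqrt x \vee x$ and recombining the two chaining terms via the hypothesis on $\varphi$; the bracketing alternative gives the same conclusion directly from Corollary~\ref{cor:maximal_bracket}. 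Dividing by $\sqrt n$, this is exactly~\eqref{eq:concentration} with $\psi(\delta) := c\, \varphi(\delta)$, valid for all $\delta > \delta_{\min}$. Since $\varphi$ is continuous, increasing, with $\delta \mapsto \varphi(\delta)/\delta$ decreasing and $\varphi^{-1}$ strictly convex, the same structural properties hold for $\psi = c\varphi$ (note $\psi^{-1}(t) = \varphi^{-1}(t/c)$ is still strictly convex, and $\delta \mapsto \psi(\delta)/\delta = c\varphi(\delta)/\delta$ is decreasing), so Proposition~\ref{prop:peeling} applies.

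Finally I would read off the conclusion of Proposition~\ref{prop:peeling}: with probability at least $1 - L e^{-x}$,
\begin{equation*}
  P(\ell_{\bar\alpha_n}) \leq P(\ell_{\alpha_*}) + (1+\epsilon)\rho + \epsilon\, \delta_{n,\epsilon}(x),
  \qquad \delta_{n,\epsilon}(x) = \psi^{-1*}\Big( \frac{2(1+\epsilon) q (1 + \sqrt x \vee x)}{\epsilon \sqrt n} \Big) \vee \delta_{\min}.
\end{equation*}
It remains to rewrite $\psi^{-1*}$ in terms of $\varphi^{-1*}$. A short computation with the definition~\eqref{eq:convex_conjuguate} gives $\psi^{-1*}(\delta) = \sup_{t>0}\{ t\delta - \varphi^{-1}(t/c)\} = c\, \varphi^{-1*}(\delta)$ after the substitution $s = t/c$, so $\delta_{n,\epsilon}(x) = \varphi^{-1*}\big( 2 q c (1+\epsilon)(1 + \sqrt x \vee x)/(\epsilon\sqrt n)\big) \vee \delta_{\min}$, and absorbing the harmless numerical factor $2q$ into the constant $c = c_{b, \norm{\alpha_0}_\infty}$ (which only changes its value, as announced by the convention on $c$) yields the stated form of $\delta_{n,\epsilon}(x)$. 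The main obstacle — really the only non-mechanical point — is the bookkeeping around the oracle: one must be careful that the ball $B(\sqrt\delta)$ in the hypothesis is centered precisely at $\alpha_*$ (allowed since the hypothesis quantifies over all $\alpha' \in A$ and $\alpha_* \in A$), and that the $d_\infty$-diameters entering $\gamma_1$ and $\gamma^{[]}$ are finite, which follows from the uniform bound $\norm{\alpha}_\infty \leq b$ in Assumption~\ref{ass:countable}; everything else is substitution and monotonicity.
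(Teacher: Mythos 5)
Your argument reproduces the paper's proof essentially verbatim: margin gives $A(\delta)\subset B(\sqrt\delta)$, the chaining/bracketing bounds supply~\eqref{eq:concentration} with $\psi=c\varphi$, and Proposition~\ref{prop:peeling} closes the argument. One small slip: the identity should read $\psi^{-1*}(\delta)=\varphi^{-1*}(c\delta)$, not $c\,\varphi^{-1*}(\delta)$ (the substitution $s=t/c$ turns $\sup_t\{t\delta-\varphi^{-1}(t/c)\}$ into $\sup_s\{s(c\delta)-\varphi^{-1}(s)\}$), but the displayed formula you then write for $\delta_{n,\epsilon}(x)$ is the correct one, so it is only a typo in the intermediate line.
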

  \begin{proof}
    Because of Assumption~\ref{ass:margin}, we have $A(\delta) \subset
    B(\sqrt{\delta})$, so Inequality~\eqref{eq:concentration} is
    satisfied under the assumptions of the theorem with $\psi(\delta)
    = c \varphi(\delta)$, using Theorem~\ref{thm:generic_chaining} or
    Corollary~\ref{cor:maximal_bracket}. Hence, we can apply
    Proposition~\ref{prop:peeling}, which entails the Theorem since
    $\psi^{-1*}(x) = \varphi^{-1*}(c x)$.
  \end{proof}

  \begin{remark}[Comparison]
    This bound for the ERM is of the same nature as previous bounds
    for the ERM in more ``standard'' models, such as density,
    regression or classification. The rate given in
    Theorem~\ref{thm:ERM_bound} gives, on examples, the same rate (up
    to constants) as the one given in \cite{massart03} (see
    Theorem~8.3), for instance. Consider the situation where
    $\varphi(\delta) = c \delta^{\alpha}$ for $c > 0$ and $\alpha \in
    (0, 1)$ ($\varphi(\delta)$ is of order $\sqrt{D \delta}$ when $A$
    has a finite dimension, see Section~\ref{sec:A_finite_dimension}
    below). In this case, we have $\varphi^{-1 *}(x ) = (1 - \alpha)
    \alpha^{\alpha / (1 - \alpha)} (c x)^{1 / (1 - \alpha)}$, so
    $\delta_{n, \epsilon}(x)$ is of order $(c / \sqrt{n})^{1 / (1 -
      \alpha)}$. The rate $\varepsilon_*^2$ in the bound by Massart is
    solution to the equation $\sqrt n \varepsilon^2 =
    \varphi(\varepsilon^2)$, hence $\varepsilon_*^2 = (c / \sqrt n)^{1
      / (1 - \alpha)}$, and both rates have the same order.
  \end{remark}
  
  \begin{remark}[Talagrand's inequality]
    Usually, the complexity of the sieve $A$ is measured by the
    functional $\phi(B) = \sqrt n\, \E^n[ \sup_{\alpha \in B} (P -
    P_n)(\ell_{\alpha} - \ell_{\alpha_*})]$ where $B$ are balls in
    $A$, like in \cite{massart03} or spheres in $A$, see
    \cite{bartlett_mendelson06}. The rate is then the solution of a
    fixed point problem involving these functional, such as, roughly,
    the equation $\phi(B(\varepsilon)) = \sqrt n \varepsilon^2$ from
    \cite{massart03}. Note that the main tool in the proof of these
    results is Talagrand's deviation inequality, see \cite{massart00},
    \cite{rio01} or \cite{bousquet_phd}. In
    Theorem~\ref{thm:ERM_bound}, we were not able to state the bound
    with a rate defined in such a way. Indeed, we needed a
    ``stronger'' control on the complexity, given by the $\gamma$
    functionals, to define $\delta_{n, \epsilon}$. This is related to
    the fact that we cannot use a Talagrand's type deviation
    inequality in the general model~\eqref{eq:model} for
    \begin{equation*}
      \sup_{\alpha \in A} (P - P_n)(\ell_{\alpha} - \ell_{\alpha_*}) -
      \E^n[ \sup_{\alpha \in A} (P - P_n)(\ell_{\alpha} -
      \ell_{\alpha_*}) ].
    \end{equation*}
    Indeed, $\int_0^1 \alpha(t, X) d M(t)$ is not, in general, bounded
    (think of the Poisson process for instance, which is a particular
    case of Section~\ref{sec:cox_processes}).

    However, the story is different when $N(t)$ is bounded, such as in
    the models of regression for right-censored data, and of
    transition intensities of Markov processes (see
    Sections~\ref{sec:censored_data} and
    \ref{sec:markov_processes}). 
    It is then possible to use the strength of Talagrand's inequality,
    following the arguments from \cite{bartlett_mendelson06} (up to
    significant modifications, since the analysis is conducted in the
    regression model).
  \end{remark}
  
  A case of importance (particularly in practice) is when $A$ is
  included in a linear space $\bar A$ with a finite dimension $D$ (see
  \cite{birge_massart98} and \cite{massart03} for instance). Using the
  version of \cite{massart03} of a classical result concerning
  $\L^\infty$-coverings of a ball in such a space (see below), we can
  show that $\delta_{n, \epsilon}(x)$ is smaller than a quantity of
  order $D / n$. This will be useful to compute rates of convergence
  in Section~\ref{sec:adaptive_estimation} below.

  \subsection{When $A$ is finite dimensional}
  \label{sec:A_finite_dimension}

  Let us now consider the case where $A$ is a subset of some linear
  space $\bar A \subset \L^2 \cap \L^\infty[0, 1]^{d+1}$ with finite
  dimension $D$. Following~\cite{birge_massart98} and \cite{bbm99}, we
  can consider the \emph{$\L^\infty$-index}
\begin{equation*}
  r(\bar A) := \frac{1}{\sqrt D} \inf_{\psi} \sup_{\beta \neq 0}
  \frac{\norm{\sum_{\lambda \in \Lambda} \beta_\lambda
      \psi_\lambda}_\infty}{|\beta|_\infty},
\end{equation*}
where $|\beta|_\infty = \max_{\lambda \in \Lambda} | \beta_\lambda |$
and where the infimum is taken over all orthonormal basis $\{
\psi_\lambda : \lambda \in \Lambda \}$ of $\bar A$.

This index can be estimated for all the linear spaces usually chosen
as approximation spaces for adaptive estimation, see
\cite{birge_massart98} and \cite{bbm99}. In particular, if $\bar A$ is
spanned by a localized basis, then $r(\bar A)$ can be bounded
independently of $D$ (think of a wavelet basis for instance, more on
that in Section~\ref{sec:adaptive_estimation} below).

Using this index, we can derive a bound for $\gamma^{[]}(B(\sqrt
\delta))$. For any $\varepsilon \in (0, \delta]$, the following holds
(see \cite{massart03}, Lemma~7.14):
\begin{equation}
  \label{eq:entropy_finitedim}
  N( B(\delta), \varepsilon, d_\infty) \leq \Big(
  \frac{L r(\bar A) \delta}{\varepsilon} \Big)^D,
\end{equation}
where $L$ can be $\sqrt{3 \pi e / 2}$. But,
using~\eqref{eq:gamma_bracket_dudley} together with the fact that
$N^{[]}(A, \epsilon / 2) \leq N(A, \epsilon, d_\infty)$, we obtain
\begin{equation}
  \label{eq:gamma_brack_finite_dim}
  \gamma^{[]}(B(\delta) ) \leq \sqrt{D} \int_0^\delta \sqrt{ \ln
    \Big( \frac{2 L r(\bar A) \delta}{\varepsilon} \Big) } \mathrm d
  \epsilon \leq L \delta \sqrt{D (\ln r(\bar A) + 1)}.
\end{equation}
So, we have the control required in Theorem~\ref{thm:ERM_bound}: $
\gamma^{[]}(B(\sqrt \delta)) \leq L \varphi(\delta)$, with
\begin{equation*}
  \varphi(\delta) = \sqrt \delta \sqrt{D (\ln r(\bar A) + 1)},
\end{equation*}
which is a function that satisfies the assumptions of
Theorem~\ref{thm:ERM_bound}. Note that
\begin{equation}
  \label{eq:phiminusonestar}
  \varphi^{-1 *}(x) = \frac{x^2 D \ln ( r(\bar A) + 1)}{4},
\end{equation}
so we have the following.
\begin{corollary}
  \label{cor:erm_finite_dim}
  Grant Assumptions~\ref{ass:support} and \ref{ass:countable}\textup,
  and assume that $A \subset \bar A$\textup, where $\bar A$ is a
  linear space with finite dimension $D$. Then\textup, if $\bar
  \alpha_n$ is a $\rho$-ERM according to
  Definition~\ref{def:erm}\textup, we have for any $\epsilon > 0$ and
  $x > 0$\textup:
  \begin{equation*}
    P(\ell_{\bar \alpha_n}) \leq P(\ell_{\alpha_*}) + (1 + \epsilon)
    \rho + \frac{c (1 + \epsilon)^2 \ln ( r(\bar A) +
      1)}{\epsilon} \frac{D}{n} (1 + x \vee \sqrt x)^2
  \end{equation*}
  with a probability larger than $1 - L e^{-x}$\textup, where $c =
  c_{b, \norm{\alpha_0}_\infty}$. In particular\textup, we obtain
  \begin{equation}
    \label{eq:ermbound1}
    \E^n \norm{\bar \alpha_n - \alpha_0}^2  \leq 2 \rho +
    \inf_{\alpha \in A} \norm{\alpha - \alpha_0}^2 +  c \ln (
    r(\bar A) + 1) \frac{D}{n}.
  \end{equation}
\end{corollary}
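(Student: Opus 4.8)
The plan is to read off the statement from Theorem~\ref{thm:ERM_bound} applied to the linear sieve, using the entropy estimate of Section~\ref{sec:A_finite_dimension}, and then to integrate the resulting tail bound to obtain~\eqref{eq:ermbound1}. First I would secure Assumption~\ref{ass:margin}: $A$ is bounded by $b$ by Assumption~\ref{ass:countable} and, being (a ball in) the linear space $\bar A$, is convex, so Lemma~\ref{lem:margin} applies. Next, by the $\L^\infty$-covering bound~\eqref{eq:entropy_finitedim} for a ball of a $D$-dimensional space, together with $N^{[]}(A,\epsilon/2)\le N(A,\epsilon,d_\infty)$ and~\eqref{eq:gamma_bracket_dudley}, one gets~\eqref{eq:gamma_brack_finite_dim}, i.e. $\gamma^{[]}(B(\sqrt\delta))\le L\,\varphi(\delta)$ with $\varphi(\delta)=\sqrt\delta\,\sqrt{D(\ln r(\bar A)+1)}$. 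This $\varphi$ is continuous and increasing, $\delta\mapsto\varphi(\delta)/\delta=\sqrt{D(\ln r(\bar A)+1)}/\sqrt\delta$ is decreasing, and $\varphi^{-1}(y)=y^2/(D(\ln r(\bar A)+1))$ is strictly convex, so the hypotheses of Theorem~\ref{thm:ERM_bound} (second alternative) hold for every $\delta_{\min}>0$.

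Plugging the convex conjugate~\eqref{eq:phiminusonestar}, $\varphi^{-1*}(y)=\tfrac14\,y^2 D\ln(r(\bar A)+1)$, into the conclusion of Theorem~\ref{thm:ERM_bound}, and noting that~\eqref{eq:gamma_brack_finite_dim} is valid for all $\delta>0$ so that $\delta_{\min}$ may be sent to $0$, we obtain
\[
\epsilon\,\delta_{n,\epsilon}(x)=\epsilon\,\varphi^{-1*}\!\Big(\frac{c(1+\epsilon)(1+\sqrt x\vee x)}{\epsilon\sqrt n}\Big)=\frac{c^2(1+\epsilon)^2\ln(r(\bar A)+1)}{4\epsilon}\,\frac{D}{n}\,(1+x\vee\sqrt x)^2,
\]
and renaming $c^2/4$ into $c$ yields the first displayed inequality of the corollary, with probability at least $1-Le^{-x}$ as supplied by Theorem~\ref{thm:ERM_bound}.

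For~\eqref{eq:ermbound1} I would first translate the excess-risk bound into an $\L^2(\mu)$ bound. Since $(X,(Y_t),(N_t))$ is independent of $D_n$ and $M$ is a martingale, the computation~\eqref{eq:norm_risk} holds conditionally on $D_n$, giving $P(\ell_{\bar\alpha_n})=\norm{\bar\alpha_n-\alpha_0}^2-\norm{\alpha_0}^2$; and since $\alpha_*$ is deterministic and minimizing $\alpha\mapsto P(\ell_\alpha)$ over $A$ is the same as minimizing $\norm{\alpha-\alpha_0}^2$, we have $P(\ell_{\alpha_*})=\inf_{\alpha\in A}\norm{\alpha-\alpha_0}^2-\norm{\alpha_0}^2$. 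Taking $\epsilon=1$ in the first part, with probability at least $1-Le^{-x}$,
\[
W:=\Big(\norm{\bar\alpha_n-\alpha_0}^2-\inf_{\alpha\in A}\norm{\alpha-\alpha_0}^2-2\rho\Big)_+\le g(x),\qquad g(x):=4c\ln(r(\bar A)+1)\,\frac{D}{n}\,(1+x\vee\sqrt x)^2.
\]
It then remains to integrate: $\E^n W\le g(0)+\int_0^\infty\P^n[W>g(x)]\,g'(x)\,\d x$, and since $\P^n[W>g(x)]\le\min(Le^{-x},1)$ while $g'(x)/g(0)$ grows only like $x^{-1/2}$ near $0$ and linearly at infinity, the integral is at most an absolute constant times $g(0)$; hence $\E^n W\le L\,g(0)$, and absorbing constants into $c$ gives~\eqref{eq:ermbound1} (using $W\ge\norm{\bar\alpha_n-\alpha_0}^2-\inf_{\alpha\in A}\norm{\alpha-\alpha_0}^2-2\rho$).

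No step is a serious obstacle once Theorem~\ref{thm:ERM_bound} and~\eqref{eq:gamma_brack_finite_dim} are available; the points that need a little care are (i) checking that $\varphi(\delta)=\sqrt\delta\,\sqrt{D(\ln r(\bar A)+1)}$ meets all the regularity requirements of Theorem~\ref{thm:ERM_bound} and that its conjugate is~\eqref{eq:phiminusonestar}, (ii) the (implicit) convexity of $A$ needed to invoke Lemma~\ref{lem:margin}, and (iii) the tail integration, in particular the range $0\le x\le\ln L$ where $Le^{-x}\ge1$ makes the deviation bound vacuous and one instead uses $\P^n[W>g(x)]\le1$ together with $\int_0^{\ln L}g'(x)\,\d x=g(\ln L)-g(0)\le L\,g(0)$.
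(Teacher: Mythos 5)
Your proposal is correct and follows essentially the same route as the paper: invoke Theorem~\ref{thm:ERM_bound} with the finite-dimensional entropy bound~\eqref{eq:entropy_finitedim}--\eqref{eq:gamma_brack_finite_dim} and the conjugate~\eqref{eq:phiminusonestar} for the high-probability bound, then set $\epsilon=1$, subtract $P(\ell_{\alpha_0})$, and integrate the tail to get~\eqref{eq:ermbound1}. Your treatment of the margin condition is in fact a touch more careful than the paper's one-line remark that it holds \emph{"since $\bar A$ is linear"}: Assumption~\ref{ass:margin} concerns $A$ and $\alpha_*\in A$, and your appeal to Lemma~\ref{lem:margin} via convexity of $A$ (e.g. when $A$ is the $\L^\infty$-ball of $\bar A$, as in Definition~\ref{def:collection}) is the right way to close that point, whereas linearity of $\bar A$ alone would not suffice for an arbitrary $A\subset\bar A$.
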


\begin{proof}
  Note that Assumption~\ref{ass:margin} is met since $\bar A$ is
  linear. So, Theorem~\ref{thm:ERM_bound} together
  with~\eqref{eq:phiminusonestar} gives the first inequality. The
  second inequality follows by choosing $\epsilon = 1$, by subtracting
  $P(\ell_{\alpha_0})$ at both sides of the inequality, and by
  integration with respect to~$x$.
\end{proof}

The next step is, usually, to have a control on the
\emph{approximation} or \emph{bias} term $\inf_{\alpha \in A}
\norm{\alpha - \alpha_0}^2$, and to choose a sieve with a dimension
that equilibrates the bias term with the ``variance'' term $D / n$,
hence the name \emph{bias-variance} problem, see \cite{cucker_smale02}
for instance. Usually, this is done using the assumption that
$\alpha_0$ belongs to some smoothness class of functions, together
with some results from approximation theory. This is where the problem
of adaptive estimation arises: the choice of the optimal $D$ depends
on the parameters of the smoothness class itself, which is unknown in
practice. So, one has to find a procedure with the capability to
select automatically a sieve or a \emph{model} among a collection $\{
A_m : m \in \mathcal M \}$. This is usually done using
model-selection, see the seminal paper \cite{bbm99}. Model selection
in the setup considered here has been studied in \cite{CGG}. In
Section~\ref{sec:learning} below, we consider an alternative approach,
based on a popular aggregation procedure. It will allow the
construction of smoothness and structure adaptive estimators, see
Section~\ref{sec:adaptive_estimation}.

\section{Agnostic learning, aggregation}
\label{sec:learning}

Let $A = A(\Lambda) := \{ \alpha_\lambda : \lambda \in \Lambda \}$ be
a set of arbitrary functions called \emph{dictionary} with cardinality
$M$. For instance, this can be a set of so-called \emph{weak}
estimators, computed based on a set of observations independent of the
sample $D_n$. We consider the problem of agnostic learning: without
any assumption on $\alpha_0$, excepted for some boundedness
assumption, we want to construct (from the data) a procedure $\hat
{\alpha}_n$ with a risk as close as possible to the smallest risk over
$A$. Namely, we want to obtain an oracle inequality of the form
\begin{equation*}
  \E^n \| \hat {\alpha}_n - \alpha_0 \|^2 \leq c \min_{\alpha \in A}
  \|\alpha - \alpha_0 \|^2 + \phi(n, M),
\end{equation*}
where $c \geq 1$ and $\phi(n, M)$ is called the {\it residue} or
\emph{rate of aggregation}, which is a quantity that we want to be
small as $n$ increases. An oracle inequality that holds with $c = 1$
is called \emph{sharp}.

This problem has been considered in several statistical models, mainly
in regression, density and classification, see among others
\cite{nemirovski00, catbook:01, jrt:06, leung_barron06,
  dalalyan_tsybakov07, yang:00, audibert-2009-37}. For instance, we
know from \cite{tsy:03} that the optimal rate of aggregation in the
Gaussian regression model is $\phi(n, M) = (\log M) /n$ (in the sharp
oracle inequality context). This rate is achieved by the algorithms of
aggregation with cumulative exponential weights, see \cite{jrt:06,
  audibert-2009-37} and aggregation with exponential weights, see
\cite{dalalyan_tsybakov07} (when the error of estimation is measured
by the empirical norm, a similar result for the integrated norm is, as
far as we know, still a conjecture).

Aggregation with exponential weights is a popular algorithm. It is of
importance in machine learning, for estimation, prediction using
expert advice, in PAC-Bayesian learning and other settings, see
\cite{MR2409394}, \cite{audibert-2009-37} and \cite{catbook:01}, among
others. However, there is no result for this algorithm in the general
model~\eqref{eq:model}, nor for any of the particular cases given in
Section~\eqref{sec:model_examples}. In this Section, we construct this
algorithm for model~\eqref{eq:model}, and give in
Theorem~\ref{thm:oracle} below an oracle inequality.

The idea of aggregation is to mix the elements from $A(\Lambda)$:
using the data, compute weights $\theta(\alpha) \in [0,1]$ for each
$\alpha \in A(\Lambda)$ satisfying $\sum_{\lambda \in \Lambda}
\theta(\alpha_{\lambda}) = 1$. These weights give a level of
significance to $\alpha$. The aggregate is the convex combination
\begin{equation}
  \label{eq:aggregate}
  \hat \alpha_n := \sum_{\lambda \in \Lambda} \theta(\alpha_\lambda)
  \alpha_\lambda,
\end{equation}
where the weight of $\alpha \in A(\Lambda)$ is given by
\begin{equation}
  \label{eq:weights}
  \theta(\alpha) := \frac{\exp\big( - n P_{n}(\ell_\alpha) / T
    \big) }{ \sum_{\lambda \in \Lambda} \exp \big( -
    n P_{n}(\ell_{\alpha_\lambda}) / T \big) },
\end{equation}
where $T > 0$ is the so-called \emph{temperature} parameter and where
we recall that
\begin{equation*}
  P_n(\ell_\alpha) = \frac{1}{n} \sum_{i=1}^n \int_0^1 \alpha(t,
  X_i)^2 Y^i(t) dt - \frac{2}{n} \sum_{i=1}^n \int_0^1 \alpha(t,
  X_i) d N^i(t)
\end{equation*}
is the empirical risk of $\alpha$. The shape of this mixing estimator
is easily explained. Indeed, the weighting scheme~\eqref{eq:weights}
is the only minimizer of
\begin{equation}
  \label{eq:pena_linearized_risk}
  \mathsf R_{n}(\theta) + \frac{T}{n} \sum_{\lambda \in
    \Lambda} \theta_\lambda \log \theta_\lambda
\end{equation}
among all $\theta \in \Theta$ (we use the convention $0 \log 0 = 0$)
where
\begin{equation*}
  \Theta := \big\{ \theta \in \mathbb R^{M} : \theta_\lambda
  \geq 0,\; \sum_{\lambda \in \Lambda} \theta_\lambda = 1 \big\},
\end{equation*}  
and where $\mathsf R_{n}(\theta)$ is the \emph{linearized empirical
  risk}
\begin{equation*}
  \mathsf R_{n}(\theta) := \sum_{\lambda \in \Lambda}
  \theta_\lambda P_{n}(\ell_{\alpha_\lambda}).
\end{equation*}
Equation~\eqref{eq:pena_linearized_risk} is the linearized risk of
$\theta \in \Theta$, which is penalized by a quantity proportional to
the Shannon's entropy of $\theta$. The resulting aggregated estimator
$\hat \alpha_n$ is then something between the ERM among the elements
of $A(\Lambda)$ (when $T$ is small), and the mean of the elements of
$A(\Lambda)$ (when $T$ is large).

\begin{theorem}
  \label{thm:oracle}
  Assume that $\norm{\alpha_0}_\infty < +\infty$, and that there is $b
  > 0$ such that $\norm{\alpha}_\infty \leq b$ for any $\alpha \in
  A(\Lambda)$. Then, for any $\epsilon > 0$, the mixing estimator
  $\hat \alpha_n$ defined by~\eqref{eq:aggregate} satisfies
  \begin{equation*}
    \E^n \norm{\hat \alpha_n - \alpha_0}^2 \leq (1+\epsilon)
    \inf_{\lambda \in \Lambda} \norm{\alpha_\lambda - \alpha_0}^2
    + \frac{c \log M}{n}
  \end{equation*}
  for any $n \geq 1$, where $c = c_{b, \norm{\alpha_0}_\infty, T,
    \epsilon}$.
\end{theorem}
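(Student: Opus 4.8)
The plan is to prove the oracle inequality for aggregation with exponential weights by exploiting the variational characterization of the weights $\theta$ as the minimizer of the penalized linearized risk~\eqref{eq:pena_linearized_risk}, and then controlling the resulting deviation terms with the martingale Bernstein inequality~\eqref{eq:bernstein2} (or Lemma~\ref{lem:tricky_deviation}). First I would record the key convexity fact: since $\alpha \mapsto \norm{\alpha - \alpha_0}^2$ is convex and $\norm{\cdot}^2$ appears in the excess risk via $P(\ell_\alpha) - P(\ell_{\alpha_0}) = \norm{\alpha - \alpha_0}^2$, Jensen's inequality gives $\norm{\hat\alpha_n - \alpha_0}^2 \leq \sum_{\lambda} \theta(\alpha_\lambda) \norm{\alpha_\lambda - \alpha_0}^2$, i.e. the excess risk of the aggregate is at most the $\theta$-weighted average of the individual excess risks. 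This reduces the problem to bounding $\sum_\lambda \theta(\alpha_\lambda) \norm{\alpha_\lambda - \alpha_0}^2$ by the infimum plus the residue.

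Next I would use the fact that $\theta$ minimizes $\mathsf R_n(\theta) + \frac{T}{n}\sum_\lambda \theta_\lambda \log\theta_\lambda$ over the simplex $\Theta$. Comparing the value at $\theta$ with the value at a Dirac mass $\delta_{\lambda_0}$ (for an arbitrary fixed $\lambda_0$), and using that the entropy term is bounded by $\log M$ in absolute value, yields
\begin{equation*}
  \sum_\lambda \theta(\alpha_\lambda) P_n(\ell_{\alpha_\lambda})
  \leq P_n(\ell_{\alpha_{\lambda_0}}) + \frac{T \log M}{n}.
\end{equation*}
Now I would pass from the empirical risk $P_n$ to the theoretical risk $P$ on both sides. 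Using the decomposition~\eqref{eq:empirical_norm_risk}, namely $P_n(\ell_\alpha) - P_n(\ell_{\alpha_0}) = \norm{\alpha - \alpha_0}_n^2 - \frac{2}{\sqrt n} Z_n(\alpha - \alpha_0)$, subtracting $P_n(\ell_{\alpha_0})$ from both sides turns the inequality into a comparison between $\sum_\lambda \theta(\alpha_\lambda)(\norm{\alpha_\lambda - \alpha_0}_n^2 - \frac{2}{\sqrt n}Z_n(\alpha_\lambda - \alpha_0))$ and $\norm{\alpha_{\lambda_0} - \alpha_0}_n^2 - \frac{2}{\sqrt n}Z_n(\alpha_{\lambda_0} - \alpha_0) + \frac{T\log M}{n}$. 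Then I would control, uniformly over the $M$ functions $\alpha_\lambda - \alpha_0$, both (i) the empirical-norm fluctuation $\norm{\alpha_\lambda - \alpha_0}_n^2 - \norm{\alpha_\lambda - \alpha_0}^2$ via the i.i.d. Bernstein inequality~\eqref{eq:bernstein_norm}, and (ii) the martingale term $Z_n(\alpha_\lambda - \alpha_0)$ via Lemma~\ref{lem:tricky_deviation}; a union bound over $\Lambda$ costs an extra $\log M$ in the exponent, consistent with the target residue. Crucially, since $\theta$ depends on $D_n$, these deviation bounds must hold simultaneously for all $\lambda$, which is exactly what the union bound provides. Integrating the resulting tail bound over $x$ gives control in expectation $\E^n$.

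The main obstacle, and the place where the $\epsilon$ in the statement and the temperature condition enter, is absorbing the cross terms. After passing to theoretical norms, the martingale deviation produces terms of the form $\norm{\alpha_\lambda - \alpha_0}\sqrt{x}/\sqrt n$ and $\norm{\alpha_\lambda - \alpha_0}_\infty x / n$; the first must be split by the elementary inequality $2ab \leq \epsilon a^2 + b^2/\epsilon$ so that $\frac{1}{\epsilon}$-multiples of $x/n$ are pushed into the residue while the $\epsilon\norm{\alpha_\lambda - \alpha_0}^2$ part is small enough to be dominated — on the right-hand side it gets absorbed into $(1+\epsilon)\norm{\alpha_{\lambda_0}-\alpha_0}^2$, and on the left-hand side one needs the coefficient in front of $\sum_\lambda \theta(\alpha_\lambda)\norm{\alpha_\lambda-\alpha_0}^2$ to stay strictly positive, which forces $T$ to be taken large enough (and contributes the $T$-dependence of $c$). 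The boundedness $\norm{\alpha_\lambda}_\infty \leq b$ and $\norm{\alpha_0}_\infty < \infty$ are what make $\norm{\alpha_\lambda - \alpha_0}_\infty$ and the predictable variation bound~\eqref{eq:variation_norm} available, and what keep all constants finite. Assembling these pieces — Jensen, the entropy-penalization comparison, the union-bounded Bernstein estimates, the cross-term splitting, and integration over $x$ — yields the claimed bound $\E^n\norm{\hat\alpha_n - \alpha_0}^2 \leq (1+\epsilon)\inf_\lambda \norm{\alpha_\lambda - \alpha_0}^2 + c\log M / n$ with $c = c_{b, \norm{\alpha_0}_\infty, T, \epsilon}$.
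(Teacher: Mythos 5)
Your proposal follows essentially the same route as the paper: convexity/Jensen to compare the aggregate's excess risk with the $\hat\theta$-weighted average; the variational characterization of $\hat\theta$ (via KL divergence against the uniform weights, equivalently your Dirac-mass comparison plus the bound $\sum\hat\theta_\lambda\log\hat\theta_\lambda\geq-\log M$) to extract the $(T\log M)/n$ term; passing from $P_n$ to $P$ using the martingale decomposition~\eqref{eq:empirical_norm_risk}; the deviation bounds from Lemma~\ref{lem:tricky_deviation} and the i.i.d.\ Bernstein inequality~\eqref{eq:bernstein_norm} combined via a union bound over $\Lambda$; the cross-term splitting $2ab\leq\epsilon a^2+b^2/\epsilon$; and integration of the resulting subexponential tail over $x$.

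One side remark in your outline is off: the absorption of the cross terms does not require $T$ large. In the paper this is handled purely by the $(1+\epsilon)$-shift, writing
\begin{equation*}
\mathcal R_n=\sum_\lambda\hat\theta_\lambda\Big(P(\ell_{\alpha_\lambda}-\ell_{\alpha_0})-(1+\epsilon)P_n(\ell_{\alpha_\lambda}-\ell_{\alpha_0})\Big)
\leq\max_{\alpha\in A(\Lambda)}\Big((1+\epsilon)(P-P_n)(\ell_\alpha-\ell_{\alpha_0})-\epsilon P(\ell_\alpha-\ell_{\alpha_0})\Big),
\end{equation*}
and it is the explicit $-\epsilon P(\ell_\alpha-\ell_{\alpha_0})$ that soaks up the $\sqrt{xP(\ell_\alpha-\ell_{\alpha_0})/n}$ part of the Bernstein bound. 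The temperature $T$ enters only linearly through the entropy penalty, hence just multiplies the $(\log M)/n$ residue; no lower bound on $T$ is needed, which is consistent with the theorem's statement ``for any $\epsilon>0$'' without a temperature condition. Apart from that caveat your plan is correct.
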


Theorem~\ref{thm:oracle} is a model-selection type oracle inequality
for the aggregation procedure given by~\eqref{eq:aggregate}. The
residual term in the oracle inequality is of order $(\log M) / n$,
which is the correct rate of convex aggregation, see \cite{tsy:03} (in
the Gaussian regression setup, and for other models with margin
parameter equal to $1$, see \cite{these_bobby}).

\begin{remark}
  The main criticism one can make about Theorem~\ref{thm:oracle} is
  that it is not sharp: the leading constant is $1 + \epsilon$ instead
  of $1$ in front of $\inf_{\lambda \in \Lambda} \norm{\alpha_\lambda
    - \alpha_0}^2$, and the constant $c$ in front of the residue is
  far from being optimal. The consequence is that we are not able in
  this setting to give a theoretically optimal value for $T$.  Sharp
  oracle inequalities are available for aggregation with exponential
  weights or cumulative weights, see \cite{dalalyan_tsybakov07},
  \cite{jrt:06} and \cite{audibert-2009-37}, see also references
  mentioned above.  However, in the setup considered here, the proof
  of a sharp oracle inequality seems quite challenging, and will be
  the subject of further investigations.
\end{remark}

\section{Structure and smoothness adaptive estimation}
\label{sec:adaptive_estimation}

In this Section, we propose an application of the results obtained in
Sections~\ref{sec:erm} and~\ref{sec:learning}. We construct an
estimator that adapts to the smoothness of $\alpha_0$ in a purely
nonparametric setting, see Section~\ref{sec:purely_nonparametric}, and
to its structure in a single-index setup, see
Section~\ref{sec:dimension_reduction}. The steps of the construction
of the estimator are given in Definition~\ref{def:steps} below. As
usual with algorithms coming from statistical learning theory, we need
to split the sample (a very particular exception can be found
in~\cite{leung_barron06}). To simplify, we shall assume that the
sample size is $2n$, see~\eqref{eq:whole_sample}, so $D_{2n}$ is the
full sample.

\begin{definition}
  \label{def:steps}
  The steps for the computation of an aggregated estimator $\hat
  \alpha_n$ are the following:
  \begin{enumerate}
  \item split the whole sample $D_{2n}$ (see \eqref{eq:whole_sample})
    into a training sample $D_{n, 1}$ of size $n$ and a \emph{learning
      sample} $D_{n, 2}$ of size $n$;
  \item choose a collection of sieves $\{ A_m : m \in \mathcal M_n \}$
    and compute, using $D_{n, 1}$, the corresponding empirical risk
    minimizers $\{ \bar \alpha_m : m \in \mathcal M_n \}$ (see
    Definition \ref{def:erm});
  \item using the learning sample $D_{n, 2}$, compute the aggregated
    estimator $\hat \alpha_n$ based on the dictionary $\{ \bar
    \alpha_m : m \in \mathcal M_n \}$, see~\eqref{eq:aggregate}
    and~\eqref{eq:weights}.
  \end{enumerate}
\end{definition}

Examples of collections $\{ A_m : m \in \mathcal M_n \}$ are given in
Appendix~\ref{sec:approximation}, together with the necessary control
of the $\L^\infty$-index (see Section~\ref{sec:A_finite_dimension}),
and a useful approximation result.

\begin{remark}[Jackknife]
  The behavior of the aggregate $\hat {\mathsf \alpha}_n$ typically
  depends on the split selected in Step~1, in particular when the
  number of observations is small. Hence, a good strategy is to
  jackknife: repeat, say, $J$ times Steps 1--3 to obtain aggregates
  $\{ \hat {\mathsf \alpha}_n^{(1)}, \ldots, \hat {\alpha}_n^{(J)}
  \}$, and compute the mean:
  \begin{equation*}
    \hat {\alpha}_n := \frac{1}{J} \sum_{j=1}^J \hat {\alpha}_n^{(j)}.
  \end{equation*}
  This jackknifed estimator should provide more stable results than a
  single aggregate. Moreover, by convexity of the risk $\alpha \mapsto
  P(\ell_\alpha)$, the jackknifed estimator satisfies the same risk
  bounds as a single aggregate.
\end{remark}

\subsection{Adaptive estimation in the purely nonparametric setting}
\label{sec:purely_nonparametric}

In model~\eqref{eq:model}, the behaviour of $\alpha_0(t, x)$ with
respect to time $t$ and with respect to the covariates $x$ have no
statistical reason to be linked. So, in a purely nonparametric
setting, it is mandatory to consider anisotropic smoothness for
$\alpha_0$. We shall assume in the statement of the upper bound, see
Theorem~\ref{thm:adaptive_nonparametric} below, that $\alpha_0 \in
B_{2, \infty}^{\bs s}$, where $\alpha_0 \in B_{2,\infty}^{\bs s}$ is
an anisotropic Besov space (see Appendix~\ref{sec:approximation}) and
$\bs s = (s_1, \ldots, s_{d+1})$ is a vector of smoothness, where
$s_i$ is the smoothness in the $i$th coordinate. For the construction
of the adaptive estimator, see Step~2 above, we need a collection of
sieves $\{ A_m : m \in \mathcal M_n\}$.
\begin{definition}[Collection]
  \label{def:collection}
  We take $\{ A_m' : m \in \mathcal M_n\}$ as:
  \begin{itemize}
  \item a collection of linear spaces spanned by piecewise polynomials
    (see Section~\ref{sec:dyadic_polynomials}), with degrees not
    larger than $l_i$ in the $i$th coordinate, or
  \item a collection of linear spaces spanned by wavelets (see
    Section~\ref{sec:wavelets}) with $l_i$ vanishing moments in the
    $i$th coordinate.
  \end{itemize}
  In both cases, we say that $(l_1, \ldots, l_{d+1})$ is the
  \emph{smoothness} of the collection, and we take
  \begin{equation*}
    \mathcal M_n := \{ (m_1, \ldots, m_{d+1}) \in \mathbb N^{d+1} :
    2^{m_i} \leq n^{1 / (d+1)} \text{ for } i = 1, \ldots, d+1  \}.
  \end{equation*}
  Finally, we fix a constant $b > 0$ and take $A_m := \{ \alpha \in
  A_m' : \norm{\alpha}_\infty \leq b \}$ for every $m \in \mathcal
  M_n$.
\end{definition}

For the statement of the adaptive upper bound, we need the following
assumption, which is a stronger version of the previous
Assumption~\ref{ass:support}.
\begin{assumption}
  \label{ass:design}
  Assume that $P_X$ has a density $f_X$ with respect to the Lebesgue
  measure, which is bounded and with support $[0, 1]^{d+1}$. Moreover,
  we assume that $\norm{\alpha_0}_\infty \leq b$, where $b$ is known
  (it is used in the definition of the sieves, see
  Definition~\ref{def:collection}).
\end{assumption}

Now, we can use together Corollary \ref{cor:erm_finite_dim} (see
Section \ref{sec:A_finite_dimension}), Theorem~\ref{thm:oracle} (see
Section~\ref{sec:learning}) and Lemma~\ref{lem:approximation} (see
Appendix~\ref{sec:approximation}) to derive an adaptive upper bound.
Take $\rho = 1/n$ in Corollary~\ref{cor:erm_finite_dim} and, say,
$\epsilon = 1$ in Theorem~\ref{thm:oracle}, to obtain
\begin{equation*}
  \E^{2n} \norm{\hat \alpha_n - \alpha_0}^2 \leq 2 \inf_{m \in
    \mathcal M_n} \Big( \inf_{\alpha \in A_m} \norm{\alpha -
    \alpha_0}^2 + \frac{c_b D_m}{n} \Big) + c_{b, T} \frac{\log |\mathcal
    M_n|}{n},
\end{equation*}
where for $m = (m_1, \ldots, m_{d+1}) \in \mathcal M_n$,
\begin{equation*}
  D_m := \prod_{j=1}^{d+1} D_{m_j} = \prod_{j=1}^{d+1} 2^{m_j}.
\end{equation*}
Let us assume that $\alpha_0 \in B_{2, \infty}^{\bs s}$, where $\bs s
= (s_1, \ldots, s_{d+1})$ satisfies $s_i > (d + 1) / 2$ for each $i =
1, \ldots, d+1$. Assumption~\ref{ass:design} entails $\norm{\alpha}^2
\leq \norm{f_X}_\infty \norm{\alpha}_2^2$ for any $\alpha \in \L^2[0,
1]^{d+1}$, where $\norm{\alpha}_2^2 = \int_{[0, 1]} \int_{[0, 1]^d}
\alpha(t, x)^2 dt dx$. So, using Lemma~\ref{lem:approximation}, we
have when $\alpha_0 \in B_{2, \infty}^{\bs s}$:
\begin{equation*}
  \E^{2n} \norm{\hat \alpha_n - \alpha_0}^2 \leq c
  \Big(\sum_{j=1}^{d+1} D_{m_j}^{-2 s_j} +
  \frac{\prod_{j=1}^{d+1} D_{m_j}}{n} + \frac{\log |\mathcal M_n|}{n}
  \Big),
\end{equation*}
where $c = c_{{b, T, \bs s, d, \norm{f_X}_\infty}, |\alpha_0|_{B_{2,
      \infty}^{\bs s}}}$. Note that $(\log |\mathcal M_n|) / n \leq
c_{d} (\log n) / n$, so the rate of convergence is given by the
optimal tradeoff between the bias and the variance terms. Since $s_i >
(d+1) / 2$ for any $i = 1, \ldots, d+1$, we have $n^{\bar {\bs s} /
  s_i(2 \bar {\bs s} + d + 1)} \leq n^{1 / (d+1)}$, so we can choose
$m = (m_1, \ldots, m_{d+1}) \in \mathcal M_n$ such that
\begin{equation}
  \label{eq:oracle_dimension}
  2^{m_i - 1}  \leq n^{\frac{\bar {\bs s} / s_i}{2 \bar {\bs
        s} + d + 1}} \leq 2^{m_i} \text{ for } i = 1, \ldots, d+1.
\end{equation}
This gives
\begin{equation*}
  \sum_{j=1}^{d+1} D_{m_j}^{-2 s_j} + \frac{\prod_{j=1}^{d+1}
    D_{m_j}}{n} = c_d n^{-2 \bar {\bs s} / (2 \bar {\bs s} + d + 1)},
\end{equation*}
so we proved the following theorem. 
\begin{theorem}
  \label{thm:adaptive_nonparametric}
  Grant Assumption~\ref{ass:design}, and consider a collection $\{ A_m
  : m \in \mathcal M_n\}$ given by Definition~\ref{def:collection}
  with smoothness $(l_1, \ldots, l_{d+1})$. Assume that $\alpha_0 \in
  B_{2,\infty}^{\bs s}$, where $\bs s = (s_1, \ldots, s_{d+1})$
  satisfies $(d+1) / 2 < s_i \leq l_i$ for each $i = 1, \ldots, d+1$.
  Then, if $\hat \alpha_n$ is the aggregated estimator given by
  Steps~1-3, we have
  \begin{equation*}
    \E^{2n} \norm{\hat \alpha_n - \alpha_0}^2 \leq c n^{-2 \bar {\bs s} /
      (2 \bar {\bs s} + d + 1)},
  \end{equation*}
  where $c = c_{{b, T, \bs s, d, \norm{f_X}_\infty}, |\alpha_0|_{B_{2,
        \infty}^{\bs s}}}$.
\end{theorem}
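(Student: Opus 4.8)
The plan is to chain together the paper's three main results — the ERM risk bound over finite-dimensional sieves (Corollary~\ref{cor:erm_finite_dim}), the aggregation oracle inequality (Theorem~\ref{thm:oracle}), and the anisotropic Besov approximation estimate (Lemma~\ref{lem:approximation}) — and then to optimize the sieve dimension. All the heavy machinery is already in place, so the argument is mostly bookkeeping, and indeed the computation is essentially the one carried out just before the theorem statement.

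First I would condition on the training half-sample $D_{n,1}$: the dictionary $\{\bar\alpha_m : m\in\mathcal M_n\}$ computed in Step~2 is then a fixed family of functions bounded by $b$ (Definition~\ref{def:collection}), so Theorem~\ref{thm:oracle} applies with, say, $\epsilon=1$. Taking $\E^{2n}$, using independence of $D_{n,1}$ and $D_{n,2}$ together with $\E\inf\leq\inf\E$, gives
\[
\E^{2n}\norm{\hat\alpha_n - \alpha_0}^2 \leq 2\inf_{m\in\mathcal M_n}\E^{n}\norm{\bar\alpha_m - \alpha_0}^2 + \frac{c_{b,T}\log|\mathcal M_n|}{n}.
\]
Into each $\E^{n}\norm{\bar\alpha_m - \alpha_0}^2$ I would substitute Corollary~\ref{cor:erm_finite_dim} (with $\rho = 1/n$), using that the localized bases of Definition~\ref{def:collection} have an $\L^\infty$-index $r(\bar A_m)$ bounded independently of $D_m = \prod_j 2^{m_j}$ (Section~\ref{sec:A_finite_dimension}, Appendix~\ref{sec:approximation}), so $\ln(r(\bar A_m)+1) = O(1)$; then I would pass from $\norm{\cdot}$ to $\norm{\cdot}_2$ via $\norm{\alpha}^2\leq\norm{f_X}_\infty\norm{\alpha}_2^2$ (Assumption~\ref{ass:design}, using $Y\in[0,1]$) and bound the bias by Lemma~\ref{lem:approximation}, $\inf_{\alpha\in A_m}\norm{\alpha-\alpha_0}_2^2\leq c\sum_j 2^{-2m_j s_j}$. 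This produces, for every $m\in\mathcal M_n$,
\[
\E^{2n}\norm{\hat\alpha_n - \alpha_0}^2 \leq c\Big(\sum_{j=1}^{d+1} 2^{-2m_j s_j} + \frac{\prod_{j=1}^{d+1} 2^{m_j}}{n}\Big) + \frac{c_d\log n}{n},
\]
since $\log|\mathcal M_n|\leq c_d\log n$.

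Finally I would optimize over $m$: the hypothesis $s_i>(d+1)/2$ forces the effective smoothness $\bar{\bs s}$ (with $(d+1)/\bar{\bs s} = \sum_i 1/s_i$) to satisfy $\bar{\bs s}>(d+1)/2$, and since then $(d+1)\bar{\bs s}/s_i < 2\bar{\bs s}\leq 2\bar{\bs s}+d+1$ we get $n^{(\bar{\bs s}/s_i)/(2\bar{\bs s}+d+1)}\leq n^{1/(d+1)}$, so an $m$ obeying \eqref{eq:oracle_dimension} indeed lies in $\mathcal M_n$. For that choice each $2^{-2m_js_j}$ and each $\prod_j 2^{m_j}/n$ is of order $n^{-2\bar{\bs s}/(2\bar{\bs s}+d+1)}$, a rate which (the exponent being $<1$) absorbs the $(\log n)/n$ residue, yielding the claimed bound. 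The only genuinely delicate points — and hence the main obstacle — are the two facts exported to Appendix~\ref{sec:approximation}: that the $\L^\infty$-index of the sieves is bounded independently of their dimension (this is what turns the $\ln(r(\bar A_m))D_m/n$ term into a clean $D_m/n$ variance term, without which the rate would carry a parasitic logarithm), and that the bias estimate survives the truncation $A_m = \{\alpha\in A_m' : \norm{\alpha}_\infty\leq b\}$, which is fine because $\norm{\alpha_0}_\infty\leq b$ lets the Besov approximants be taken bounded by $b$. The sample-splitting bookkeeping and the fixed-point-free optimization are routine.
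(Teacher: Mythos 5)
Your argument is correct and is essentially identical to the one the paper itself gives immediately before the theorem statement: chain Theorem~\ref{thm:oracle} (with $\epsilon=1$) and Corollary~\ref{cor:erm_finite_dim} (with $\rho=1/n$), pass from $\norm{\cdot}$ to $\norm{\cdot}_2$ via Assumption~\ref{ass:design}, invoke Lemma~\ref{lem:approximation}, absorb $(\log|\mathcal M_n|)/n$, and optimize $m$ via~\eqref{eq:oracle_dimension} using $s_i>(d+1)/2$ to ensure $m\in\mathcal M_n$. Your additional remarks on conditioning on $D_{n,1}$ and on the truncation $A_m=\{\alpha\in A_m':\norm{\alpha}_\infty\le b\}$ being compatible with the Besov approximant are sound and, if anything, slightly more careful than the paper's own write-up.
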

The rate $n^{-2 \bar {\bs s} / (2 \bar {\bs s} + d + 1)}$ is the
optimal rate of convergence (in the minimax sense) in this model,
under the extra assumption that $f_X$ is bounded away from zero on
$[0, 1]^d$, see Theorem~3 in \cite{CGG}. Hence,
Theorem~\ref{thm:adaptive_nonparametric} shows that $\hat \alpha_n$
adapts to the smoothness of $\alpha_0$ over a range of Besov spaces
$B_{2,\infty}^{\bs s}$, for $(d+1) / 2 < s_i \leq l_i$.

\subsection{Dimension reduction, single-index}
\label{sec:dimension_reduction}

The mark $X$ is $d$-dimensional so the intensity $\alpha_0$ takes $d +
1$ variables. As with any other nonparametric estimation model, we
know that when $d$ gets large the dimension has a significant impact
on the accuracy of estimation. This the so-called \emph{curse of
  dimensionality} phenomenon, which is reflected by the rate $n^{-2
  \bar {\bs s} / (2 \bar {\bs s} + d + 1)}$, see
Theorem~\ref{thm:adaptive_nonparametric} above. This rate is slow if
$d$ is large compared to $\bar {\bs s}$. In this Section, we propose a
way to ``get back'' the rate $n^{-2 \bar {\bs s} / (2 \bar {\bs s} +
  2)}$, using single-index modelling. Thanks to our approach based on
aggregation, we are able to construct an estimator that automatically
takes advantage (without any prior testing) of the single-index
structure when possible: the rate is then $n^{-2 \bar {\bs s} / (2
  \bar {\bs s} + 2)}$, otherwise it is the the purely nonparametric
rate $n^{-2 \bar {\bs s} / (2 \bar {\bs s} + d + 1)}$. This idea of
mixing nonparametric and semiparametric estimators can be also found
in~\cite{yang:00} for density estimation.

Dimension reduction techniques usually involves an assumption on the
structure of the object to be estimated. Main examples are the
additive and the single-index models. Additive modelling was proposed
by \cite{LNVdG} in the same context as the one considered here, with
very different techniques (kernel estimation and back-fitting). In this
paper, we focus on single-index modelling (see
Remark~\ref{rem:cox-aalen} below). On single-index models (mainly in
regression) and the corresponding estimation problems (estimation of
the link function, estimation of the index), see \cite{spok01},
\cite{delecroix_etal03}, \cite{hardle_xia06}, \cite{delecroix_etal06},
\cite{geenens_delecroix05}, \cite{gaiffas_lecue07}, \cite{MR2438819}
among many others. The single-index structure is as follows: assume
that there is an unknown function $\beta_0 : \mathbb R_+ \times
\mathbb R \rightarrow \mathbb R_+$ (called \emph{link function}, with
has unknown smoothness here) and an unknown vector $v_0 \in \mathbb
R^d$ (called \emph{index}) such that
\begin{equation}
  \label{eq:SImodel}
  \alpha_0(t, x) = \beta_0(t, v_0^{\top} x).
\end{equation}
In order to make the representation~\eqref{eq:SImodel} unique
(identifiability), we shall assume (see Assumption~\ref{ass:sim}
below) that $v_0 \in S_+^{d-1}$, where $S_+^{d-1}$ is the half-unit
sphere defined by
\begin{equation}
  \label{eq:half_unit_sphere}
  S_+^{d-1} = \big\{ v \in \mathbb R ^d : |v|_2 = 1 \text{
    and } v_d \geq 0 \big\},
\end{equation}
where $|\cdot|_2$ is the Euclidean norm over $\mathbb R^d$;

The steps of the construction of the adaptive estimator in this
context follows the ones from Definition~\ref{def:steps}, but the
dictionary is enlarged by a set $\{ \bar \alpha_{m, v}^{\rm SIM} : m
\in \mathcal M_n^{\rm SIM}, v \in S_\Delta^{d-1} \}$, of empirical
risk minimizers, where $S_\Delta^{d-1}$ is a $\Delta$-net of
$S_+^{d-1}$. So, compared to Section~\ref{sec:purely_nonparametric},
the idea is simply to add estimators that works under the single-index
assumption in the dictionary.
\begin{definition}
  \label{def:stepssim}
  The steps for the computation of the aggregated estimator $\hat
  \alpha_n$ are the following:
  \begin{enumerate}
  \item split the whole sample $D_{2n}$ (see \eqref{eq:whole_sample})
    into a training sample $D_{n, 1}$ of size $n$ and a \emph{learning
      sample} $D_{n, 2}$ of size $n$;
  \item Compute a $\Delta = (n \log n)^{-1/2}$-net of the half-unit
    sphere $S_+^{d-1}$ denoted by $S_\Delta^{d-1}$ and for each $v \in
    S_\Delta^{d-1}$ compute the pseudo-training samples
    \begin{equation}
      \label{eq:SIM_proj_training}
      D_{n, 1}(v) := \big[ (v^\top X_i, N^i(t), Y^i(t)) : t \in [0,
      1], 1 \leq i \leq n\big],
    \end{equation}
    where the $d$-dimensional marks $X_i$ are simply replaced
    univariate marks $v^\top X_i$.
  \item Fix a collection of $2$-dimensional sieves ($d=1$) $\{
    A_m^{\rm SIM} : m \in \mathcal M_n^{\rm SIM} \}$ given by
    Definition~\ref{def:collection}. Compute, for every $m \in
    \mathcal M_n^{\rm SIM}$ and $v \in S_\Delta^{d-1}$, empirical risk
    minimizers $\bar \beta_{m, v}^{\rm SIM}$, over $A_m^{\rm SIM}$, of
    the empirical risks
    \begin{align}
      \nonumber P_{n, 1}^{(v)}(\ell_{\alpha}) = \frac{1}{n}
      \sum_{i=1}^n \int_0^1 \alpha(t, v^\top X_i)^2 Y^i(t) dt -
      \frac{2}{n} \sum_{i=1}^n \int_0^1 \alpha(t, v^\top X_i) d
      N^i(t),
    \end{align}
    and define
    \begin{equation*}
      \bar \alpha_{m, v}^{\rm SIM}(\cdot, \cdot) := \bar \beta_{m,
        v}^{\rm SIM}(\cdot, v^\top \cdot).
    \end{equation*}
    (so that each $\bar \alpha_{m, v}^{\rm SIM}$ works as if $v$ were
    the true index).
  \item follow Steps~2 and~3 from Definition~\ref{def:steps}, where we
    add the estimators $\{ \bar \beta_{m, v}^{\rm SIM} : m \in
    \mathcal M_n^{\rm SIM}, v \in S_\Delta^{d-1} \}$ to the set of
    purely nonparametric estimators $\{ \bar \alpha_m : m \in \mathcal
    M_n \}$ in the aggregation step.
  \end{enumerate}
\end{definition}
An important point of this algorithm is that we do not estimate the
index directly: we mix estimators in order to \emph{adapt} to the
unknown $v_0$ and to the unknown smoothness of $\beta_0$. This
approach was previously adopted in \cite{gaiffas_lecue07} for the
estimation of the regression function. Note that the size of
$S_{\Delta}^+$ increases strongly with $n$ and $d$, so this method is
restricted to a reasonably small $d$. High dimensional covariates
cannot be handled in such a semiparametric approach, this problem will
be the subject of another work. About high dimension,
see~\cite{tibshirani97}, where the LASSO has been studied in the Cox
model.

The following set of assumptions gives the identifiability of
model~\eqref{eq:SImodel} (see for instance the survey paper by
\cite{geenens_delecroix05}, or Chapter~2 in~\cite{horowitz98}),
excepted for~\eqref{eq:link_lip} and~\eqref{eq:link_bounded_below}
which are technical assumptions.
\begin{assumption}
  \label{ass:sim}
  Assume that~\eqref{eq:SImodel} holds, and that
  \begin{itemize}
  \item $x \mapsto \beta_0(t, x)$ is not constant over the support of
    $v_0\T X$;
  \item $X$ admits at least one continuously distributed coordinate
    (w.r.t. the Lebesgue measure);
  \item the support of $X$ is not contained in any linear subspace of
    $\mathbb R^d$;
  \item $v_0 \in S_+^{d-1}$;
  \item there is $c_0 > 0$ such that for any $x, y \in [0, 1]^d$, any
    $t \geq 0$:
    \begin{equation}
      \label{eq:link_lip}
      | \beta_0(t, x) - \beta_0(t, y) | \leq c_0 |x - y|;
    \end{equation}
  \item there is $b_0 > 0$ such that
    \begin{equation}
      \label{eq:link_bounded_below}
      \inf_{(t, x) \in [0, 1]^{d+1}} \beta_0(t, x) \geq b_0.
    \end{equation}
  \end{itemize}
\end{assumption}

\begin{remark}
  \label{rem:cox-aalen}
  In the problem of estimating the intensity of a counting process in
  presence of covariates, two of the most popular models are special
  cases of the single-index model, as described in
  Equation~\eqref{eq:SImodel}:
  \begin{itemize}
  \item the Cox model (see \cite{Cox}), where there exists an unknown
    function $\beta_0$ such that:
    \begin{equation}
      \alpha_0(t, x) = \beta_0(t)\exp( v_0^{\top} x).
    \end{equation}
    and 
  \item the Aalen model (see \cite{aalen80}), which can be written as:
    \begin{equation}
      \alpha_0(t, x) = \beta_0(t)+ v_0^{\top} x.
    \end{equation}
  \end{itemize}
  This emphasizes the relevance of considering single-index models in
  this context, and the use of anisotropic smoothness. This paper is
  only a first step in this direction, for the expected rate of
  convergence in these two models would be $n^{-2s / (2s + 1)}$ when
  the link function has smoothness $s$ in some sense. Adaptive
  estimation by aggregation, including the Cox and Aalen models, will
  be addressed in a forthcoming paper.
\end{remark}

\begin{theorem}
  \label{thm:adaptivesim}
  Grant the same assumptions as in
  Theorem~\ref{thm:adaptive_nonparametric} and let $\hat \alpha_n$ be
  the aggregated estimator from Definition~\ref{def:stepssim}.
  \begin{itemize}
  \item If Assumption~\ref{ass:sim} holds (single-index) with $\beta_0
    \in B_{2,\infty}^{\bs s}$, where $\bs s = (s_1, s_2)$ satisfies $1
    < s_i \leq l_i$ for $i = 1, 2$, we have
    \begin{equation*}
      \E^{2n} \norm{\hat \alpha_n - \alpha_0}^2 \leq c n^{-2 \bar {\bs s} /
        (2 \bar {\bs s} + 2)}
    \end{equation*}
    for $n$ large enough, where $c = c_{{b, T, \bs s, d,
        \norm{f_X}_\infty}, |\beta_0|_{B_{2, \infty}^{\bs s}}, v_0,
      b_0, c_0}$.
  \item Otherwise, we have, when $\alpha_0 \in B_{2,\infty}^{\bs s}$,
    where $\bs s = (s_1, \ldots, s_{d+1})$ satisfies $(d+1) / 2 < s_i
    \leq l_i$ for each $i = 1, \ldots, d+1$, that
    \begin{equation*}
      \E^{2n} \norm{\hat \alpha_n - \alpha_0}^2 \leq c n^{-2 \bar {\bs
          s} / (2 \bar {\bs s} + d + 1)},
    \end{equation*}
    for $n$ large enough, where $c = c_{{b, T, \bs s, d,
        \norm{f_X}_\infty}, |\alpha_0|_{B_{2, \infty}^{\bs s}}}$.
  \end{itemize}
\end{theorem}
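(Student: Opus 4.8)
The plan is to treat the two parts of the statement together by observing that the aggregated estimator $\hat\alpha_n$ of Definition~\ref{def:stepssim} is built from an enlarged dictionary, and that the oracle inequality of Theorem~\ref{thm:oracle} automatically picks out the best element of this dictionary (up to the residual term $(\log M)/n$). Since the dictionary contains both the purely nonparametric estimators $\{\bar\alpha_m : m \in \mathcal M_n\}$ and the single-index ones $\{\bar\alpha_{m,v}^{\rm SIM} : m \in \mathcal M_n^{\rm SIM},\, v \in S_\Delta^{d-1}\}$, we will get, with $M = |\mathcal M_n| + |\mathcal M_n^{\rm SIM}| \cdot |S_\Delta^{d-1}|$ and $\epsilon = 1$ in Theorem~\ref{thm:oracle}, that
\begin{equation*}
  \E^{2n} \norm{\hat\alpha_n - \alpha_0}^2 \leq 2\, \E^{2n}\Big[ \min\Big( \min_{m \in \mathcal M_n} \norm{\bar\alpha_m - \alpha_0}^2,\; \min_{m, v} \norm{\bar\alpha_{m,v}^{\rm SIM} - \alpha_0}^2 \Big) \Big] + \frac{c_{b,T} \log M}{n}.
\end{equation*}
The second part of the theorem (the "otherwise" case) follows immediately from the first term in the minimum, exactly as in Theorem~\ref{thm:adaptive_nonparametric}: the purely nonparametric sub-dictionary already yields the rate $n^{-2\bar{\bs s}/(2\bar{\bs s}+d+1)}$, and one only needs to check that adding the single-index estimators does not spoil the residual term, i.e. that $\log M \leq c_d \log n$; this is true because $|S_\Delta^{d-1}|$ is of order $\Delta^{-(d-1)} = (n\log n)^{(d-1)/2}$, so $\log M = O(d \log n)$.

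For the first part (single-index case), the main work is to bound $\E^{2n}\big[ \min_{m,v} \norm{\bar\alpha_{m,v}^{\rm SIM} - \alpha_0}^2 \big]$ by choosing a near-optimal pair $(m, v)$. First I would pick $v \in S_\Delta^{d-1}$ with $|v - v_0| \leq \Delta = (n\log n)^{-1/2}$. Then I would decompose, for this fixed $v$,
\begin{equation*}
  \norm{\bar\alpha_{m,v}^{\rm SIM} - \alpha_0}^2 \leq 2 \norm{\bar\alpha_{m,v}^{\rm SIM} - \alpha_v}^2 + 2 \norm{\alpha_v - \alpha_0}^2,
\end{equation*}
where $\alpha_v(t,x) := \beta_v(t, v^\top x)$ and $\beta_v$ is the $\L^2(\mu_v)$-projection-type target that the ERM run on the pseudo-sample $D_{n,1}(v)$ is actually estimating (i.e. the minimizer over $A_m^{\rm SIM}$ of the risk associated to the univariate mark $v^\top X$). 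The first term is controlled by Corollary~\ref{cor:erm_finite_dim} applied to the two-dimensional sieve $A_m^{\rm SIM}$: it is bounded by $2\rho + \inf_{\alpha \in A_m^{\rm SIM}} \norm{\beta_0(\cdot, v^\top\cdot) - \alpha}^2_{(v)} + c_b D_m/n$ (here I must be slightly careful that the empirical norm and the margin assumption are set up with respect to the measure $\mu_v$ induced by $(v^\top X, Y)$ — but Corollary~\ref{cor:erm_finite_dim} only needs $\bar A$ linear and the boundedness assumptions, both of which hold). Using Lemma~\ref{lem:approximation} with $\beta_0 \in B_{2,\infty}^{\bs s}$, $\bs s = (s_1, s_2)$, and choosing $m = (m_1, m_2) \in \mathcal M_n^{\rm SIM}$ according to the two-dimensional analogue of~\eqref{eq:oracle_dimension}, this whole term is $O(n^{-2\bar{\bs s}/(2\bar{\bs s}+2)})$. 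The genuinely new piece is the bias $\norm{\alpha_v - \alpha_0}^2$ coming from using the approximate index $v$ instead of $v_0$: since $\alpha_0(t,x) = \beta_0(t, v_0^\top x)$ and by the Lipschitz assumption~\eqref{eq:link_lip},
\begin{equation*}
  |\alpha_0(t,x) - \beta_0(t, v^\top x)| = |\beta_0(t, v_0^\top x) - \beta_0(t, v^\top x)| \leq c_0 |v_0^\top x - v^\top x| \leq c_0 |v - v_0|\, |x| \leq c_0 \sqrt{d}\, \Delta,
\end{equation*}
so $\norm{\beta_0(\cdot, v^\top\cdot) - \alpha_0}^2 \leq c_0^2 d\, \Delta^2 = c_{c_0,d} (n \log n)^{-1} = o(n^{-2\bar{\bs s}/(2\bar{\bs s}+2)})$ because $2\bar{\bs s}/(2\bar{\bs s}+2) < 1$; and then $\norm{\alpha_v - \beta_0(\cdot, v^\top\cdot)}^2$ is controlled the same way since $\alpha_v$ is a projection and projections are contractions, or is simply absorbed into the ERM bias term above. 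Putting these pieces together gives the claimed rate.

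The main obstacle I anticipate is bookkeeping the measure-change between the $d+1$-dimensional ambient norm $\norm{\cdot}$ (built from $\mu$ on $[0,1]^{d+1}$) and the effective two-dimensional norm the single-index ERM minimizes over (built from the law of $(v^\top X, Y(t))$): Assumption~\ref{ass:design} gives a bounded density $f_X$, which passes to a bounded density for $v^\top X$ and lets us compare $\norm{\cdot}$ with the Lebesgue $\L^2$ norm both ways up to constants, but the identifiability/lower-bound assumptions (e.g.~\eqref{eq:link_bounded_below} and the non-degeneracy of the support of $X$) are what is needed to make sure the comparison constants are uniform over $v$ in a neighborhood of $v_0$, and hence over the relevant part of $S_\Delta^{d-1}$. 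The only other subtlety is that Corollary~\ref{cor:erm_finite_dim} and Theorem~\ref{thm:oracle} are stated for a fixed sieve / fixed dictionary, so taking the infimum over $m$ (and $v$) inside the expectation requires that the oracle inequality hold for the actual dictionary used, which it does since it is $M$-dependent with only a $\log M$ price; the split of $D_{2n}$ into $D_{n,1}$ (used to build the $\bar\alpha$'s, so they are fixed given $D_{n,1}$) and $D_{n,2}$ (used for aggregation) is exactly what makes this legitimate.
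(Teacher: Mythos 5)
Your high-level architecture matches the paper's (split the sample, pick $v_\Delta\in S_\Delta^{d-1}$ close to $v_0$, invoke the oracle inequality, control the index mismatch via the Lipschitz condition~\eqref{eq:link_lip}, and check that $\log M \lesssim \log n$). But you are missing the paper's central technical device, and this is where your argument has a genuine gap.

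You try to control the risk of $\bar\alpha_{m,v}^{\rm SIM}$ by directly applying Corollary~\ref{cor:erm_finite_dim} to the ``composed'' sieve $\{\alpha(\cdot, v^\top\cdot) : \alpha\in A_m^{\rm SIM}\}$, working throughout under the true law $\P_{v_0}$ of the observations. The problem is that Corollary~\ref{cor:erm_finite_dim} rests on the $\L^\infty$-index $r(\bar A)$ and the bracket-entropy bound~\eqref{eq:entropy_finitedim}, which in turn uses a Lebesgue-orthonormal basis of $\bar A$ with $\L^\infty$-norms controlled independently of dimension. That control is established in Section~\ref{sec:wavelets} for localized bases over $[0,1]^2$ (or $[0,1]^{d+1}$), not for the image $\{\Psi_\lambda(\cdot, v^\top\cdot)\}$ under precomposition with $x\mapsto v^\top x$. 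Precomposition destroys the orthogonality structure in $\L^2[0,1]^{d+1}$, and it is not obvious (and you do not check) that the composed space admits an orthonormal basis with uniformly bounded $\L^\infty$-index. Your parenthetical --- ``Corollary~\ref{cor:erm_finite_dim} only needs $\bar A$ linear and the boundedness assumptions'' --- is exactly the step that fails: the corollary's constant $c\ln(r(\bar A)+1)$ can degenerate, and that is precisely what is not verified for the composed space.

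The paper avoids this issue entirely through a change of measure. It introduces the likelihood ratio $L_n(v_0, v_\Delta) = \d\P_{\beta_0(\cdot, v_0^\top\cdot)}^n / \d\P_{\beta_0(\cdot, v_\Delta^\top\cdot)}^n$ and proves a dedicated concentration inequality for it (Lemma~\ref{lem:devialikeli}, with a martingale argument in the spirit of Proposition~\ref{prop:deviation_martingale}). Writing $\E_{v_0}^n[\cdot] = \E_{v_\Delta}^n[\,\cdot\,\ind{L_n\leq x}L_n] + \E_{v_0}^n[\,\cdot\,\ind{L_n>x}]$ and invoking the lemma, the paper transfers the expectation from $\P_{v_0}$ to $\P_{v_\Delta}$ at a cost of order $1/n$. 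Under $\P_{v_\Delta}$, the pseudo-sample $D_{n,1}(v_\Delta)$ \emph{genuinely} follows the model~\eqref{eq:model} with two-dimensional covariate $v_\Delta^\top X$, so Corollary~\ref{cor:erm_finite_dim} applies to the plain two-dimensional sieve $A_{m^*}^{\rm SIM}$ with its bounded $\L^\infty$-index; there is never any need to deal with a composed sieve. This change-of-measure step is explicitly called out in the paper (just after Theorem~\ref{thm:adaptivesim}) as ``a technical tool concerning counting processes, namely a concentration inequality for the likelihood ratio'', and it is the genuinely new ingredient your proposal omits. Without it, either you must prove the uniform $\L^\infty$-index bound for the composed sieves (a nontrivial extra step you do not attempt and which is not established anywhere in the paper), or the argument does not close.
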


The proof of this theorem is given in Section~\ref{sec:main_proofs}.
This theorem proves that $\hat \alpha_n$ adapts to the smoothness of
the intensity, and to its structure: if the single-index
model~\eqref{eq:SImodel} holds, then the rate is $n^{-2 \bar {\bs s} /
  (2 \bar {\bs s} + 2)}$, which is the optimal rate when $X$ is
one-dimensional. Otherwise, the rate of convergence is $n^{-2 \bar
  {\bs s} / (2 \bar {\bs s} + d + 1)}$ when the covariate is
$d$-dimensional. Of course, this result is not surprising, since any
kind of estimator can be used in the dictionary to be
aggregated. However, note that the proof of
Theorem~\ref{thm:adaptivesim} involves a technical tool concerning
counting processes, namely a concentration inequality for the
likelihood ratio between two indexes in $S_+^{d-1}$, see
Lemma~\ref{lem:devialikeli} in Section~\ref{sec:main_proofs}.








\section{Proofs}
\label{sec:main_proofs}

\subsection*{Proof of Proposition~\ref{prop:deviation_martingale}}

\begin{proof}[Proof of Proposition~\ref{prop:deviation_martingale}]  
  Let us define the process
  \begin{equation*}
    Z_n(\alpha, t) := \frac{1}{\sqrt{n}} \sum_{i=1}^n \int_0^t
    \alpha(u, X_i) d M^i(u):=\sum_{i=1}^n  Z^i_n(\alpha, t),
  \end{equation*}
  so that $Z_n(\alpha) = Z_n(\alpha, 1)$. The predictable variation of
  $M^i$ is given by $\langle M^i(t) \rangle = \int_0^t \alpha_0(u,
  X_i) Y^i(u) du$, so we have
  \begin{equation*}
    \langle Z^i_n(\alpha, t) \rangle = \frac{1}{n}
    \int_0^t \alpha(u, X_i)^2 \alpha_0(u, X_i) Y^i(u) du
  \end{equation*}
  for any $t \in [0, 1]$. Moreover, we have $\Delta M^i(t) \in \{ 0, 1
  \}$ for any $i=1, \dots,n$ since the counting processes $N^i$ have
  an intensity. We can write $ Z^i_n = Z^{i,c}_n + Z^{i,d}_n$ where
  $Z^{i,c}_n$ is a continuous martingale and where $Z^{i,d}_n$ is a
  purely discrete martingale (see e.g. \cite{lipstershiryayev}). Let
  $h > 0$ be fixed and define $U_h^i(t) := h Z^i_n(\alpha,t) -
  S_h^i(t)$, where $S_h^i(t)$ is the compensator of
  \begin{equation}
    \label{eq:S_h_def}
    \frac12 h^2 \langle Z^{i,c}_n(\alpha, t) \rangle + \sum_{s \leq t}
    \left(\exp(h | \Delta Z^i_n(\alpha, s)|) -1 -h| \Delta Z^i_n(\alpha,
      s)| \right).
  \end{equation}
  We know from the proof of Lemma 2.2 and Corollary 2.3 of
  \cite{vandegeer95}, see also~\cite{lipstershiryayev}, that
  $\exp(U_h^i(t))$ is a super-martingale. Then, if $S_h := \sum_{i=1}^n
  S_h^i$, $U_h := \sum_{i=1}^n U_h^i$, we have
  \begin{align}
    \nonumber \E^n [ e^{h Z_n(\alpha)} \ind{\langle Z_n(\alpha)
      \rangle \leq \delta^2} ] &\leq \big( \E^n[ e^{2 U_h(1) } ]
    \big)^{1/2} \big( \E^n [ e^{2 S_h(1)} \ind{\langle Z_n(\alpha)
      \rangle \leq \delta^2} ]
    \big)^{1/2} \\
    &\leq \big( \E^n [ e^{2 S_h(1)} \ind{\langle Z_n(\alpha) \rangle
      \leq \delta^2} ] \big)^{1/2}. \label{eq:deviamart1}
  \end{align}
  The last inequality holds since $\exp(U_h^i(t))=\exp( h
  Z^i_n(\alpha,t) - S_h^i(t))$ are independent super-martingales with
  $U_h^i(0)=0$, so that $\E [ \exp(2 U_h^i(t))] \leq 1$, for $i=1,
  \dots, n$. Let us decompose $M^i= M^{i,c}+ M^{i,d}$, with $M^{i,c}$
  a continuous martingale and $M^{i,d}$ a purely discrete
  martingale. The process $V_2^i(t) := \langle M^i(t) \rangle$ is the
  compensator of the quadratic variation process $[M^i(t)]= \langle
  M^{i,c}(t) \rangle+ \sum_{s \leq t} | \Delta M^i(t)|^2$. If $k \geq
  3$, we define $V^i_k(t)$ as the compensator of the $k$-variation
  process $\sum_{s \leq t} | \Delta M^i(t)|^k$ of $ M^i(t)$. Since
  $\Delta M^i(t) \in \{ 0, 1 \}$ for all $0 \leq t \leq 1$, the
  $V_k^i$ are all equal for $k \geq 3$ and such that $V^i_k(t) \leq
  V_2^i(t)$, for all $k \geq 3$. The process $S_h^i(t)$ has been
  defined as the compensator of~\eqref{eq:S_h_def}. As a consequence,
  we have:
   \begin{align*}
     S_h^i(t)
     &= 
     \sum_{k \geq 2}\frac{1}{k!} \Big(\frac{h}{\sqrt{n}} \Big)^k
     \int_0^t |\alpha(u, X_i)|^k dV^i_k(u) \\
     &\leq \int_0^t \alpha(u, X_i)^2 dV^i_2(u) \times \sum_{k \geq
       2}\frac{\norm{\alpha}_\infty^{k-2}}{k!}
     \Big(\frac{h}{\sqrt{n}} \Big)^k
   \end{align*}
   and if $\langle Z_n(\alpha) \rangle \leq \delta^2$
   \begin{equation*}
     S_h(1) \leq \frac{n
       \delta^2}{\norm{\alpha}_\infty^2} \Big(
     \exp\Big( \frac{h \norm{\alpha}_\infty}{\sqrt{n}} \Big) - 1 -
     \frac{h \norm{\alpha}_\infty}{\sqrt{n}} \Big).
   \end{equation*}
   Thus, plugging this in~\eqref{eq:deviamart1} gives
   \begin{equation*}
     \psi_{n, \delta}(h) \leq \frac{n
       \delta^2}{\norm{\alpha}_\infty^2} \Big(
     \exp\Big( \frac{h \norm{\alpha}_\infty}{\sqrt{n}} \Big) - 1 -
     \frac{h \norm{\alpha}_\infty}{\sqrt{n}} \Big)
   \end{equation*}
   for any $h > 0$. Now, choosing
   \begin{equation*}
     h := \frac{\sqrt{n}}{\norm{\alpha}_\infty} \log
     \Big(\frac{z\norm{\alpha}_\infty }{\delta^2 \sqrt{n}} +1 \Big)
   \end{equation*}
   entails~\eqref{eq:cramer_minoration}.
 \end{proof}

 \subsection*{Proof of Theorem~\ref{thm:generic_chaining}}

 \begin{proof}[Proof of Theorem~\ref{thm:generic_chaining}]
   First, note that \eqref{eq:model} entails
   \begin{align*}
     \ell_{\alpha}(X, (Y_t), (N_t)) = \ell_\alpha'(X, (Y_t)) - 2
     \int_0^1 \alpha(t, X) d M(t),
   \end{align*}
   where $\ell_\alpha'$ is the loss function
   \begin{align*}
     \ell_\alpha'(x, (y_t)) := \int_0^1 \alpha(t, x)^2 y(t) dt - 2
     \int_0^1 \alpha(t, x) \alpha_0(t, x) y(t) dt.
   \end{align*}
   So, the following decomposition holds:
   \begin{equation*}
     (P - P_n)(\ell_\alpha - \ell_{\alpha_*}) = (P - P_n)(\ell_\alpha'
     - \ell_{\alpha_*}') + \frac{2}{\sqrt n} Z_n(\alpha_* - \alpha),
   \end{equation*}
   where we recall that $Z_n(\cdot)$ is given
   by~\eqref{eq:empirical_process_training}, and where
   $P(\ell_\alpha') := \E[ \ell_{\alpha}'(X, (Y_t)) ]$ and
   $P_n(\ell_\alpha') := \frac{1}{n} \sum_{i=1}^n \ell_{\alpha}'(X_i,
   (Y_t^i))$. First, let us prove the concentration inequality for
   $\sup_{\alpha \in A} ( Z_n(\alpha_*) - Z_n(\alpha))$. The proof
   follows the lines of the proof of Theorem~1.2.7 in
   \cite{talagrand06}. Consider admissible sequences $(\mathcal
   B_j)_{j \geq 0}$ and $(\mathcal C_j)_{j \geq 0}$ such that
  \begin{align*}
    \sum_{j \geq 0} 2^{j} \Delta(B_j(\alpha), d_\infty) \leq 2
    \gamma_1(A, d_\infty) \; \text{ and } \; \sum_{j \geq 0} 2^{j/2}
    \Delta(C_j(\alpha), d_2) \leq 2 \gamma_2(A, d_2)
  \end{align*}
  for any $\alpha \in A$. We construct partitions $\mathcal A_j$ of
  $A$ as follows. Set $\mathcal A_0 = \{ A \}$ and for $j \geq 1$,
  $\mathcal A_j$ is the partition generated by $\mathcal B_{j-1}$ and
  $\mathcal C_{j-1}$, namely the partition consisting of every set $B
  \cap C$ where $B \in \mathcal B_{j-1}$ and $C \in \mathcal
  C_{j-1}$. Note that $|\mathcal A_j| \leq (2^{2^{j-1}})^2 = 2^{2^j}$
  so that $(\mathcal A_j)$ is admissible. Define a sequence $(A_j)_{j
    \geq 0}$ of increasing subsets of $A$ by taking exactly one
  element in each set of $\mathcal A_j$. Such a set $A_j$ is then used
  as an approximation of $A$, and is such that $|A_j| \leq
  2^{2^j}$. Define $\pi_j(\alpha)$ by the relation
  \begin{equation*}
    A_j \cap A_j(\alpha) = \{ \pi_j(\alpha) \},
  \end{equation*}
  and take $\pi_0(\alpha) = \alpha_*$. In view of
  Lemma~\ref{lem:tricky_deviation}, we have with a probability larger
  than $1 - 2 \exp(-(x + 2^{j+1}))$:
  \begin{align*}
    Z_n( \pi_{j-1}(\alpha) ) - Z_n( \pi_j(\alpha)) &\leq C_0
    d_2(\pi_j(a), \pi_{j-1}(\alpha)) \sqrt{x + 2^{j+1}} \\
    &+ (C_0 + 1) \frac{ d_\infty(\pi_j(\alpha), \pi_{j-1}(\alpha)) (x
      + 2^{j+1})}{\sqrt n}.
  \end{align*}
  Now, for a fixed $\alpha \in A$, decompose the increment
  $Z_n(\alpha_*) - Z_n(\alpha)$ along the \emph{chain}
  $(\pi_j(\alpha))_{j \geq 0}$:
  \begin{equation*}
    Z_n(\alpha_*) - Z_n(\alpha) = \sum_{j \geq 1} \big(
    Z_n(\pi_{j-1}(\alpha)) - Z_n(\pi_{j}(\alpha)) \big),
  \end{equation*}
  and note that the number of pairs $\{ \pi_j(\alpha),
  \pi_{j-1}(\alpha) \}$ is at most $2^{2^j} \times 2^{2^{j-1}} \leq
  2^{2^{j+1}}$. This gives, together with union bounds for each term
  of the chain:
  \begin{align*}
    \sup_{\alpha \in A} (Z_n(\alpha_*) - Z_n(\alpha)) \leq
    \sup_{\alpha \in A} \sum_{j \geq 1} \Big( &C_0 \sqrt{x + 2^{j+1}}
    d_2(\pi_j(\alpha),
    \pi_{j-1}(\alpha)) \\
    &+ \frac{C_0 + 1}{\sqrt n} (x + 2^{j+1}) d_\infty(\pi_j(\alpha),
    \pi_{j-1}(\alpha)) \Big)
  \end{align*}
  with a probability larger than $1 - 2 \sum_{j \geq 1} 2^{2^{j+1}}
  \exp(-(x + 2^{j+1})) \geq 1 - L \exp(-x)$ (with $L \approx 0.773$).
  But, for any $j \geq 2$, $\pi_j(\alpha), \pi_{j-1}(\alpha) \in
  A_{j-1}(\alpha) \subset B_{j-2}(\alpha)$, so
  $d_\infty(\pi_j(\alpha), \pi_{j-1}(\alpha)) \leq
  \Delta(B_{j-2}(\alpha), d_\infty)$ and $d_\infty(\pi_1(\alpha),
  \pi_0(\alpha)) \leq \Delta(B_0(\alpha), d_\infty) = \Delta(A,
  d_\infty)$. Doing the same for $d_2$, we obtain that, with
  probability $\geq 1 - L \exp(-x)$:
  \begin{equation}
    \label{eq:deviation_sup_Z_n}
    \sup_{\alpha \in A} (Z_n(\alpha_*) - Z_n(\alpha)) \leq 2 C_0 (1 +
    \sqrt{x}) \gamma_2(A, d_2) + \frac{2(C_0 + 1)}{\sqrt n} (1 + x)
    \gamma_1(A, d_\infty).
  \end{equation}
  We can do the same job for $\sup_{\alpha \in A} (P -
  P_n)(\ell_\alpha' - \ell_{\alpha_*}')$. Note that
  \begin{align*}
    &\ell_\alpha(X, (Y_t)) - \ell_{\alpha_*}(X, (Y_t)) \\
    &= \int_0^1 ( \alpha(t, X) - \alpha_*(t, X)) (\alpha(t, X) +
    \alpha_*(t, X) - 2 \alpha_0(t, X) ) Y(t) dt,
  \end{align*}
  so using Assumptions~\ref{ass:support} and~\ref{ass:countable}, we
  have $| \ell_\alpha(X, (Y_t)) - \ell_{\alpha_*}(X, (Y_t)) | \leq 2(b
  + \norm{\alpha_0}_\infty) \norm{\alpha - \alpha_*}_\infty$ and
  \begin{align*}
    \E [\ell_\alpha(X, (Y_t)) &- \ell_{\alpha_*}(X, (Y_t)) )^2 ] \leq
    4(b + \norm{\alpha_0}_\infty)^2 \norm{\alpha - \alpha_*}^2.
  \end{align*}
  Therefore, the Bernstein's inequality (for the sum of i.i.d. random
  variables) entails that
  \begin{equation*}
    (P - P_n)(\ell_\alpha' - \ell_{\alpha_*}') \leq 2 (b +
    \norm{\alpha_0}_\infty ) \Big( \frac{\norm{\alpha
        - \alpha_*} \sqrt{2x}}{\sqrt n} + \frac{\norm{\alpha -
        \alpha_*}_\infty x} {n} \Big)
  \end{equation*}
  holds with a probability larger than $1 - e^{-x}$. Then, we can
  apply again the generic chaining argument to prove that with a
  probability larger than $1 - L e^{-x}$:
  \begin{equation*}
    \sup_{\alpha \in A} (P - P_n)(\ell_\alpha' - \ell_{\alpha_*}')
    \leq  4 (b + \norm{\alpha_0}_\infty) \Big( \frac{\gamma_2(A, d_2)
      (1 + \sqrt{x})}{\sqrt n}  + \frac{\gamma_1(A, d_\infty) (1 +
      x)}{n} \Big).
  \end{equation*}
  This concludes the proof of the Theorem.
\end{proof}

\subsection*{Proof of Theorem~\ref{thm:oracle}}

\begin{proof}[Proof of Theorem~\ref{thm:oracle}]
  Recall that the \emph{linearized risk} over $A(\Lambda)$ is given by
  \begin{equation*}
    \mathsf R(\theta) := \sum_{\lambda \in \Lambda} \theta_\lambda
    P(\ell_{\alpha_\lambda})
  \end{equation*}
  for $\theta \in \Theta$, where we recall that
  \begin{equation*}
    \Theta = \{ \theta \in \mathbb R^{M} : \theta_\lambda
    \geq 0,\; \sum_{\lambda \in \Lambda} \theta_\lambda = 1 \},
  \end{equation*}  
  and the \emph{linearized empirical risk} is given by
  \begin{equation*}
    \mathsf R_{n}(\theta) = \sum_{\lambda \in \Lambda}
    \theta_\lambda P_{n}(\ell_{\alpha_\lambda}).
  \end{equation*}
  We recall that the mixing estimator $\hat \alpha$ is given by
  \begin{equation*}
    \hat \alpha := \sum_{\lambda \in \Lambda} \hat \theta_\lambda
    \alpha_\lambda,
  \end{equation*}
  where the Gibbs weights $\hat \theta = (\hat
  \theta_\lambda)_{\lambda \in \Lambda} :=
  (\theta(\alpha_\lambda))_{\lambda \in \Lambda}$ are given
  by~\eqref{eq:weights} and are the unique solution of the
  minimization problem~\eqref{eq:pena_linearized_risk}.
  By convexity of the risk, we have for any $\epsilon > 0$:
  \begin{equation*}
    P(\ell_{\hat \alpha} - \ell_{\alpha_0}) \leq (1 + \epsilon)
    (\mathsf R_{n}(\hat \theta) - P_{n}(\ell_{\alpha_0})) + \mathcal
    R_{n},
  \end{equation*}
  where we introduced the residual term
  \begin{align*}
    \mathcal R_{n} &:= \mathsf R(\hat \theta) - P(\ell_{\alpha_0}) -
    (1 + \epsilon) (\mathsf R_{n}(\hat \theta) -
    P_{n}(\ell_{\alpha_0})) \\
    &= \sum_{\lambda \in \Lambda} \hat \theta_\lambda \Big( P
    (\ell_{\alpha_\lambda} - \ell_{\alpha_0} ) - (1 + \epsilon)
    P_{n}(\ell_{\alpha_\lambda} - \ell_{\alpha_0}) \Big).
  \end{align*}
  Let $\hat \lambda$ be such that $\alpha_{\hat \lambda}$ is the
  empirical risk minimizer in $A(\Lambda)$, namely
  \begin{equation*}
    P_{n}(\ell_{\alpha_{\hat \lambda}}) = \min_{\lambda \in \Lambda}
    P_{n}(\ell_{\alpha_\lambda}).
  \end{equation*}
  Since
  \begin{equation*}
    \sum_{\lambda \in \Lambda} \hat \theta_\lambda \log \Big( \frac{\hat
      \theta_\lambda}{1 / M} \Big) = K(\hat \theta, u) \geq 0,
  \end{equation*}
  where $K(\hat \theta, u)$ denotes the Kullback-Leibler divergence
  between the weights $\hat \theta$ and the uniform weights $u := (1 /
  M)_{\lambda \in \Lambda}$, we have
  \begin{align*}
    \mathsf R_{n}(\hat \theta) &\leq \mathsf R_{n}(\hat \theta) +
    \frac{T}{n} K(\hat \theta, u) \\
    &= \mathsf R_{n}(\hat \theta) + \frac Tn \sum_{\lambda \in
      \Lambda} \hat \theta_\lambda \log \hat \theta_\lambda + \frac{T
      \log M}{n} \\
    &\leq \mathsf R_{n}(e_{\hat \lambda}) + \frac{T \log M}{n} \\
    &= P_{n}(\ell_{\alpha_{\hat \lambda}}) + \frac{T \log M}{n},
  \end{align*}
  where $e_\lambda \in \Theta$ is the vector with all its coordinates
  equal to $0$ excepted for the $\lambda$-th which is equal to
  $1$. This gives
  \begin{equation*}
    P(\ell_{\hat \alpha} - \ell_{\alpha_0}) \leq (1 + \epsilon)
    \min_{\lambda \in \Lambda} P_{n}(\ell_{\alpha_\lambda} -
    \ell_{\alpha_0} ) + \mathcal R_{n},
  \end{equation*}
  and consequently
  \begin{align*}
    \E^{n} \norm{\hat \alpha - \alpha_0}^2 &\leq (1 + \epsilon)
    \min_{\lambda \in \Lambda} \norm{\alpha_\lambda - \alpha_0}^2 + (1
    + \epsilon) \frac{T \log M}{n} + \E^{n}[ \mathcal R_{n} ].
  \end{align*}
  Hence, it remains to prove that for some constant $C = C_{\epsilon,
    b, \norm{\alpha}_\infty}$, we have
  \begin{equation}
    \label{eq:residual_term}
    \E^{n} [ \mathcal R_{n} ] \leq \frac{C \log M}{n}.
  \end{equation}
  Since $\mathsf R(\cdot)$ and $\mathsf R_{n}(\cdot)$ are linear on
  $\Theta$, we have
  \begin{equation*}
    \mathcal R_{n} \leq \max_{\alpha \in A(\Lambda)} \Big( (1 +
    \epsilon) \big(P(\ell_{\alpha} - \ell_{\alpha_0}) -
    P_{n}(\ell_{\alpha} - \ell_{\alpha_0}) \big) - \epsilon
    P(\ell_{\alpha} - \ell_{\alpha_0}) \Big).
  \end{equation*}
  The following decomposition holds (see
  Section~\ref{sec:erm_risk_bound}):
  \begin{equation*}
    (P - P_n)(\ell_\alpha - \ell_{\alpha_0}) = (P - P_n)(\ell_\alpha'
    - \ell_{\alpha_0}') + \frac{2}{\sqrt n} Z_n(\alpha_0 - \alpha).
  \end{equation*}
  The Bernstein's inequality for the sum of i.i.d. variables (see the
  proof of Theorem~\ref{thm:generic_chaining}) gives
  \begin{equation*}
    (P - P_n)(\ell_\alpha' - \ell_{\alpha_0}') \leq (b +
    \norm{\alpha_0}_\infty ) \Big( \frac{\norm{\alpha
        - \alpha_0} \sqrt{2x}}{\sqrt n} + \frac{\norm{\alpha -
        \alpha_0}_\infty x} {n} \Big),
  \end{equation*}
  so together with Lemma~\ref{lem:tricky_deviation}, and since
  $P(\ell_\alpha - \ell_{\alpha_0}) = \norm{\alpha - \alpha_0}^2$, we
  obtain that
  \begin{align*}
    (P - P_n)(\ell_\alpha - \ell_{\alpha_0}) \leq
    \frac{C_{\norm{\alpha_0}_\infty, b}^1 \sqrt{2 x P(\ell_\alpha -
        \ell_{\alpha_0})}}{\sqrt n} + \frac{C_{\norm{\alpha_0}_\infty,
        b}^2 x} {n}
  \end{align*}
  with probability larger than $1 - 3 e^{-x}$, where
  $C_{\norm{\alpha_0}_\infty, b}^1 := C_{\norm{\alpha_0}_\infty} /
  \sqrt 2 + b + \norm{\alpha_0}_\infty$ and
  $C_{\norm{\alpha_0}_\infty, b}^2 := (C_{\norm{\alpha_0}_\infty} + 1
  + b + \norm{\alpha}_\infty) (b + \norm{\alpha_0}_\infty)$, with
  $C_{\norm{\alpha_0}_\infty}$ given in
  Lemma~\ref{lem:tricky_deviation}. Now, using the fact that
  \begin{equation*}
    \frac{C_{\norm{\alpha_0}_\infty, b}^1 \sqrt{2 x P(\ell_\alpha -
        \ell_{\alpha_0})}}{\sqrt n} \leq \frac{\epsilon}{1 + \epsilon}
    P(\ell_\alpha - \ell_{\alpha_0}) +   \frac{(1 + \epsilon)
      (C_{\norm{\alpha_0}_\infty, b}^1)^2}{\epsilon} \frac{x}{n},
  \end{equation*}
  we obtain that with a probability larger than $1 - 3e^{-x}$:
  \begin{equation*}
    (1 +
    \epsilon) \big(P(\ell_{\alpha} - \ell_{\alpha_0}) -
    P_{n}(\ell_{\alpha} - \ell_{\alpha_0}) \big) - \epsilon
    P(\ell_{\alpha} - \ell_{\alpha_0}) \leq C_{\epsilon,
      \norm{\alpha_0}_\infty, b} \frac x n,
  \end{equation*}
  where $C_{\epsilon, \norm{\alpha_0}_\infty, b} :=
  (C_{\norm{\alpha_0}_\infty, b}^1)^2 (1 + \epsilon)^2 / \epsilon + (1
  + \epsilon) C_{\norm{\alpha_0}_\infty, b}^2$. This subexponential
  deviation entails that for any $x > 0$:
  \begin{equation*}
    \E^{n} \big[ \mathcal R_{n} \big] \leq 2 x + \frac{3 M C \exp(-n
      x / C)}{n},
  \end{equation*}
  where $C = C_{\epsilon, \norm{\alpha_0}_\infty, b}$. If we denote by
  $x(y)$ the unique solution of $x = y \exp(-x)$, where $y > 0$, we
  obtain
  \begin{equation*}
    \E^{n} \big[ \mathcal R_{n} \big] \leq \frac{5 C \log M}{n}
  \end{equation*}
  for the choice $x = C x(M) / n$, since we have $x(M) \leq \log
  M$. This concludes the proof of Theorem~\ref{thm:oracle}.
\end{proof}

\begin{proof}[Proof of Theorem~\ref{thm:adaptivesim}]
  Assume for now that~\eqref{eq:SImodel} holds. Take $v_\Delta \in
  S_\Delta^+$ such that $|v_\Delta - v_0|_2 \leq \Delta$, and let $m^*
  = (m_1^*, m_2^*)$ be the oracle dimension of the sieve for the link
  function, that satisfies~\eqref{eq:oracle_dimension} with $d =
  1$. Denote for short the oracle estimator
  \begin{equation*}
    \bar \alpha_* = \bar \beta_{m^*, v_\Delta}(\cdot, v_\Delta^\top
    \cdot),
  \end{equation*}
  that is, the element of $A_{m^*}$ that minimizes the empirical risk
  computed using the training sample $D_{n, 1}(v_\Delta)$.

  Note that the cardinality of $S_\Delta^+$ is smaller than $c /
  \Delta^{d-1}$, where $\Delta = (n \log n)^{-1/2}$, so the
  cardinality of the whole dictionary $\{ \bar \alpha_m : m \in
  \mathcal M_n \} \cup \{ \bar \alpha_{m, v}^{\rm SIM} : m \in
  \mathcal M_n^{\rm SIM}, v \in S_\Delta^{d-1} \}$ is of order $c
  n^{(d-1) / 2} (\log n)^{2 + 1/2} + (\log n)^{d+1}$. As a
  consequence, Theorem~\ref{thm:oracle} gives
  \begin{equation*}
    \E^{2n} \norm{\hat \alpha_n - \alpha_0}^2 \leq 2 \E^n \norm{\bar
      \alpha_* - \alpha_0}^2 + c \frac{\log n}{n}.
  \end{equation*}
  Note that~\eqref{eq:link_lip} entails $\norm{\beta_0(\cdot,
    v_\Delta^\top \cdot) - \beta_0(\cdot, v_0^\top \cdot)}^2 \leq c
  \Delta^2 = c / (n \log n)$. Hence,
  \begin{equation*}
    \norm{\bar \alpha_* - \alpha_0}^2 \leq 2 \norm{\bar \alpha_* -
      \beta_0(\cdot, v_\Delta^\top \cdot)}^2 + \frac{2 c}{n \log n}.
  \end{equation*}
  We shall denote in what follows by $\E_v^n$ the expectation wrt
  $\P_v^n$, the joint law of the observations when the intensity
  writes $\beta_0(\cdot, v^\top \cdot)$ (the true index is $v$). For
  two indexes $v, v_0 \in S_+^{d-1}$, we introduce the following
  likelihood ratio:
  \begin{equation*}
    L_n(v_0, v) = \frac{ \d \P_{\beta_0(\cdot, v_0^\top
        \cdot)}^n}{\d \P_{\beta_0(\cdot, v^\top \cdot)}^n} ,
  \end{equation*}
  which is the likelihood ratio of the training data $D_{n, 1}$
  ``between'' the two indexes $v$ and $v_0$. It can be explicitly
  computed using Jacod's formula, see
  Appendix~\ref{sec:appendix_proba} below. Of course, when $v$ and
  $v_0$ are close to each other, we expect $L_n(v_0, v)$ to be
  small. This is the statement of the next Lemma.
  \begin{lemma}
    \label{lem:devialikeli}
    Grant Assumption~\ref{ass:sim}, and let $v, v_0 \in S_+^{d-1}$ be
    such that $\norm{v - v_0}_2 \leq \Delta_n$, where $\Delta_n = (n
    \log n)^{-1/2}$. Then, if $n$ is large enough, one has for any $x >
    0$:
    \begin{equation*}
      \P_{v_0}^n[ L_n(v_0, v) \geq x ] \leq \sqrt x n^{-c (\log x)^2},
    \end{equation*}
    where $c = b_0 / (2 d c_0^2)$.
  \end{lemma}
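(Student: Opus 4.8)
The plan is to bound the exponential moments $\E_{v_0}^n[L_n(v_0,v)^\theta]$ for $\theta>0$ and then conclude by Markov's inequality with an exponent $\theta$ optimised in $x$. By Jacod's formula (see Appendix~\ref{sec:appendix_proba}), and since $\beta_0\geq b_0>0$ by~\eqref{eq:link_bounded_below} makes the logarithms below meaningful,
\begin{equation*}
  L_n(v_0,v) = \prod_{i=1}^n \exp\Big( \int_0^1 \log\frac{\beta_0(t,v_0\T X_i)}{\beta_0(t,v\T X_i)}\, dN^i(t) - \int_0^1 \big( \beta_0(t,v_0\T X_i) - \beta_0(t,v\T X_i) \big) Y^i(t)\, dt \Big).
\end{equation*}
Fix $\theta>0$ and condition on $(X_i,Y^i)$. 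Using the classical exponential-martingale identity for stochastic integrals against a counting process with bounded intensity (the same device underlying Proposition~\ref{prop:deviation_martingale}), the $i$-th factor $L_n^i$ satisfies
\begin{equation*}
  \E_{v_0}^n\big[ (L_n^i)^\theta \,\big|\, X_i, Y^i \big] = \exp\Big( \int_0^1 \phi_\theta\big( \beta_0(t,v_0\T X_i), \beta_0(t,v\T X_i) \big) Y^i(t)\, dt \Big),
\end{equation*}
where $\phi_\theta(a,b) := a^{1+\theta} b^{-\theta} - (1+\theta) a + \theta b$. One checks $\phi_\theta(b,b)=0$ and that $a\mapsto\phi_\theta(a,b)$ is convex with minimum at $a=b$, so $\phi_\theta\geq 0$.

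The next step is a quantitative control of $\phi_\theta$. Writing $\phi_\theta(a,b) = (1+\theta)\int_b^a ((s/b)^\theta - 1)\, ds$, a second-order expansion gives $\phi_\theta(a,b) \leq \frac{\theta(1+\theta)}{2 b_0}(a-b)^2\,(1+o(1))$ whenever $a,b\geq b_0$ and both $|a-b|$ and $\theta|a-b|$ are small. The Lipschitz assumption~\eqref{eq:link_lip} yields $|\beta_0(t,v_0\T X_i)-\beta_0(t,v\T X_i)| \leq c_0|(v_0-v)\T X_i| \leq c_0\sqrt d\,\norm{v-v_0}_2 \leq c_0\sqrt d\,\Delta_n$ with $\Delta_n^2 = 1/(n\log n)$, and $\int_0^1 Y^i(t)\,dt\leq 1$; so by independence of the $n$ copies,
\begin{equation*}
  \E_{v_0}^n\big[ L_n(v_0,v)^\theta \big] \leq \exp\Big( \frac{\theta(1+\theta)\, c_0^2 d}{2 b_0 \log n}\,(1+o(1)) \Big) = \exp\Big( \frac{\theta(1+\theta)}{4 c \log n}\,(1+o(1)) \Big),
\end{equation*}
with $c = b_0/(2 d c_0^2)$ as in the statement.

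Finally, Markov's inequality gives $\P_{v_0}^n[L_n(v_0,v)\geq x] \leq x^{-\theta}\,\E_{v_0}^n[L_n(v_0,v)^\theta]$; taking logarithms and minimising the right-hand side over $\theta$, the choice $\theta = 2c(\log n)(\log x) - \tfrac12$ makes $-\theta\log x + \frac{\theta(1+\theta)}{4c\log n}$ at most $-c(\log x)^2\log n + \tfrac12\log x$, i.e.\ $\log\big(\sqrt x\, n^{-c(\log x)^2}\big)$. The main obstacle is precisely the Taylor estimate for $\phi_\theta$: since the optimal exponent grows like $(\log n)(\log x)$, one must check that $\theta\Delta_n = o(1)$, so that $(\beta_0(t,v_0\T X_i)/\beta_0(t,v\T X_i))^\theta$ stays close to $1$ and the quadratic bound holds with the constant $\theta(1+\theta)/(2b_0)$; this is exactly where the hypothesis ``$n$ large enough'' is used to absorb the $(1+o(1))$ corrections (and one checks separately that the asserted bound is trivial for $x$ too small for $\theta$ to be positive).
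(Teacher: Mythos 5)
Your argument is correct and reaches the stated bound, but it organizes the exponential-moment computation differently from the paper. The paper writes $\log L_n(v_0,v)$ as a martingale term $\sum_i\int \mathcal L_{v_0,v}\,dM^i$ plus a drift, controls the martingale's exponential moment by compensating it into a supermartingale and applying a Cauchy--Schwarz split (incurring factors of~$2$), then bounds the compensator $\tilde S_h^i$ by a term-by-term series together with the auxiliary function $\Psi_a(x)=-\log(1-ax)/a$, and separately bounds the drift. You instead go directly for $\E_{v_0}^n[L_n^\theta]$ using the Dol\'eans--Dade exponential martingale for counting processes, which collapses everything into the single function $\phi_\theta(a,b)=a^{1+\theta}b^{-\theta}-(1+\theta)a+\theta b$; a single quadratic expansion of $\phi_\theta$ then replaces the several separate estimates in the paper. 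This is genuinely cleaner and avoids the Cauchy--Schwarz loss, and your identification of the crucial point---that the quadratic bound on $\phi_\theta$ requires $\theta\lvert a-b\rvert$ to be small, which holds at the optimizer $\theta\sim c(\log n)(\log x)$ exactly because $\Delta_n=(n\log n)^{-1/2}$---is precisely the mechanism that also makes the paper's choice $h=b_0\log x/(2n\epsilon^2)$ work. Two small caveats: (i) writing an \emph{equality} for $\E_{v_0}^n[(L_n^i)^\theta\mid X_i,Y^i]$ is literal only when $N^i$ given $(X_i,Y^i)$ is a Poisson process (as in Section~\ref{sec:cox_processes}); in the general Aalen model one rather has $\E_{v_0}^n\big[(L_n^i)^\theta\exp(-\int_0^1\phi_\theta\,Y^i\,dt)\big]=1$ from the exponential-martingale identity, and then one bounds $\int\phi_\theta\,Y^i\,dt$ pointwise --- this gives the same upper bound on $\E[(L_n^i)^\theta]$, so it is a notational rather than substantive gap. (ii) Your last remark that the bound is ``trivial'' when $\theta\le 0$ is only right for $x\in[1,e^{1/(4c\log n)}]$, where one checks $\tfrac12\log x - c(\log n)(\log x)^2>0$ so the right-hand side exceeds~$1$; for $x<1$ neither your proof nor the paper's actually applies (both need a positive exponent), and the asserted inequality is not trivially true there --- this imprecision is shared with the paper and is harmless since the lemma is invoked with $x=e^{1/\sqrt c}>1$.
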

  The proof of this Lemma can be found below. It uses the same kind of
  arguments as the proof of
  Proposition~\ref{prop:deviation_martingale}. Let $x > 0$ to be
  chosen later on, and decompose the expectation over $\{ L_n(v_0,
  v_\Delta) > x \}$ and $\{ L_n(v_0, v_\Delta) \leq x \}$ to get
  \begin{align*}
    \E_{v_0}^{n} \norm{\bar \alpha_* - \beta_0(\cdot, v_\Delta^\top
      \cdot)}^2 &= \E_{v_\Delta}^{n} [ \norm{\bar \alpha_* -
      \beta_0(\cdot, v_\Delta^\top \cdot)}^2 \ind{L_n(v_0, v_\Delta)
      \leq x} L_n(v_0, v_\Delta)] \\
    &+ \E_{v_0}^{n} [ \norm{\bar \alpha_* - \beta_0(\cdot,
      v_\Delta^\top \cdot)}^2 \ind{L_n(v_0, v_\Delta) > x} ]
  \end{align*}
  so using Assumption~\ref{def:collection} and
  Lemma~\ref{lem:devialikeli}, we obtain
  \begin{align*}
    \E_{v_0}^{n} \norm{\bar \alpha_* - \beta_0(\cdot, v_\Delta^\top
      \cdot)}^2 \leq x \E_{v_\Delta}^{n} \norm{\bar \alpha_* -
      \beta_0(\cdot, v_\Delta^\top \cdot)}^2 + 4 b^2 \sqrt{x} n^{-c
      (\log x)^2},
  \end{align*}
  so for $x = e^{1 / \sqrt c}$, we have
  \begin{align*}
    \E_{v_0}^{n} \norm{\bar \alpha_* - \beta_0(\cdot, v_\Delta^\top
      \cdot)}^2 \leq c \E_{v_\Delta}^{n} \norm{\bar \alpha_* -
      \beta_0(\cdot, v_\Delta^\top \cdot)}^2 + \frac c n.
  \end{align*}
  But, $\E_{v_\Delta}^{n} \norm{\bar \alpha_* - \beta_0(\cdot,
    v_\Delta^\top \cdot)}^2$ is nothing but the risk of the minimizer
  $\bar \beta_{m^*}$ of the empirical risk $R_{n, 1}^{(v_\Delta)}$
  over the sieve $A_{m^*}$: in this risk, the ``true covariate'' is
  now $v_\Delta^\top X$. Indeed,
  \begin{align*}
    \E_{v_\Delta}^{n} &\norm{\bar \alpha_* - \beta_0(\cdot,
      v_\Delta^\top \cdot)}^2 \\
    &= \E_{v_\Delta}^{n} \Big[ \int_0^1 \int (\bar \beta_{m^*}(t,
    v_\Delta^\top x) - \beta_0(t, v_\Delta^\top x) )^2 \E[ Y(t) | X =
    x ] dt P_X(dx) \Big] \\
    &= \E_{v_\Delta}^{n} \Big[ \int_0^1 \int (\bar \beta_{m^*}(t, x')
    - \beta_0(t, x') )^2 \E[ Y(t) | v_\Delta^\top X = x' ] dt
    P_{v_\Delta^\top X}(dx') \Big],
  \end{align*}
  so conducting the same analysis as in
  Section~\ref{sec:purely_nonparametric}, we can prove that the choice
  of $m^*$ entails that
  \begin{equation*}
    \E_{v_\Delta}^{n} \norm{\bar \alpha_* - \beta_0(\cdot,
      v_\Delta^\top \cdot)}^2 \leq c n^{-2 \bar {\bs s} /(2 \bar {\bs
        s} + 2)}.
  \end{equation*}
  This concludes the proof of Theorem~\ref{thm:adaptivesim} in the
  single-index case. If~\eqref{eq:SImodel} does not hold, then in the
  oracle inequality we take the oracle purely nonparametric element,
  using the same analysis as in
  Section~\ref{sec:purely_nonparametric}.
\end{proof}


\begin{proof}[Proof of Lemma~\ref{lem:devialikeli}]
  In view of Equation~\eqref{eq:log_likeli}, see
  Appendix~\ref{sec:appendix_proba}, we can write,
  using~\eqref{eq:model}:
  \begin{align*}
    \log L_n(v_0, v) &= \sum_{i=1}^n \int_0^1 \Big( \mathcal L_{v_0,
      v}(t, X_i) d N^i(t) - \Upsilon_{v_0,v}(t, X_i) Y^i(t) dt
    \Big) \\
    &= \sum_{i=1}^n \int_0^1 \mathcal L_{v_0, v}(t, X_i) d M^i(t)  \\
    &+ \sum_{i=1}^n \int_0^1\Big\{\mathcal L_{v_0,v}(t, X_i)
    \beta_0(t, v_0^\top X_i) - \Upsilon_{v_0,v}(t, X_i) \Big\}Y^i(t)
    dt ,
  \end{align*}
  where we shall use the notations
  \begin{align*}
    \Upsilon_{v_0,v}(t, X_i) &:= \beta_0(t, v_0^\top X_i) - \beta_0(t,
    v^\top X_i) \\
    \mathcal L_{v_0, v}(t, X_i) &:= \log \beta_0(t, v_0^\top X_i) -
    \log \beta_0(t, v^\top X_i)
  \end{align*}
  throughout the proof the Lemma. Now, fix some $h > 0$ (to be chosen
  later on) and write
  \begin{align*}
    &\P_{v_0}^n [ L_n(v_0, v) \geq x ] \\
    &\leq \E_{v_0}^n [ L_n(v_0, v)^h ] e^{-h \log x} \\
    &= \E_{v_0}^n \Big[ \exp \Big( \sum_{i=1}^n h \int_0^1 \mathcal
    L_{v_0, v}(t, X_i) d M^i(t) \\
    &+ h \sum_{i=1}^n \int_0^1 \Big\{ \mathcal L_{v_0,v}(t, X_i)
    \beta_0(t, v_0^\top X_i) - \Upsilon_{v_0,v}(t, X_i) \Big\} Y^i(t)
    dt - h \log x \Big) \Big].
  \end{align*}  
  We follow the main steps of the proof of
  Proposition~\ref{prop:deviation_martingale}. Define
  \begin{align*}
    \tilde U_h^i(t) := h \int_0^t \mathcal L_{v_0, v}(s, X_i) d M^i(s)
    - \tilde S_h^i(t):=h O^i(t)- \tilde S_h^i(t),
  \end{align*} 
  where $\tilde S_h^i(t)$ is the compensator of
  \begin{equation}
    \label{eq:tildeS_h_def}
    \frac12 h^2 \langle O^{i,c}( t) \rangle + \sum_{s \leq t}
    \left(\exp(h | \Delta O^i(s)|) -1 -h| \Delta O^i(s)| \right),
  \end{equation} 
  where $O^{i,c}$ is the continuous part of the process $O^i.$ We know
  from the proof of Lemma~2.2 and Corollary~2.3 of~\cite{vandegeer95},
  see also~\cite{lipstershiryayev}, that $\exp(\tilde U_h^i) = \exp(h
  O^i- \tilde S_h^i)$, for $i =1, \dots, n$ are
  i.i.d. super-martingales. As a consequence, we get:
  \begin{equation*}
    \P_{v_0} [ L_n(v_0, v) \geq x ] \leq \E_{v_0}^{1/2} \Big[ \exp \Big( 2
    \sum_{i=1}^n  \tilde U_h^i(t) \Big) \Big] \, \E_{v_0}^{1/2} [
    \mathbb L_n(1) ] \leq \E_{v_0}^{1/2} [ \mathbb L_n(1) ],
  \end{equation*}
  where
  \begin{align*}
    \mathbb L_n(1) := \exp \Big( &2 \sum_{i=1}^n \Big\{ \tilde
    S_h^i(1) + h \int_0^1 \Big\{\mathcal L_{v_0,v}(t, X_i) \beta_0(t,
    v_0^\top X_i) - \Upsilon_{v_0,v}(t, X_i) \Big\} Y^i(t) dt \Big\}
    \\
    &- 2 h \log C\Big).
  \end{align*}
  We are now establishing an upper bound for $\E_{v_0}^{1/2} [ \mathbb
  L_n(1) ]$. Looking closer to the process $ \tilde S_h^i(t)$, we can
  write:
  \begin{align*}
    \tilde S_h^i(t) = \sum_{k \geq 2} \frac{h^k}{k!}  \int_0^t |
    \mathcal L_{v_0, v}(s, X_i) |^k dV_k^i(s),
  \end{align*} 
  where the processes $V_k^i$ have been defined in the proof of
  Proposition~\ref{prop:deviation_martingale}. Assumption~\ref{ass:sim}
  and the fact that $\norm{v - v_0}_2 \leq \Delta$ gives
  \begin{equation}
    \label{eq:close_betas}
    | \Upsilon_{v_0,v}(t, x) | \leq c_0 \sqrt d \Delta := \epsilon
  \end{equation}
  for any $t \geq 0$ and $x \in [0, 1]^d$. In particular, we have $|
  \Upsilon_{v_0,v}(t, x) | \leq b_0 / 2$ when $n$ is large
  enough. This allows to write:
  \begin{align*}
    | \mathcal L_{v_0, v}(t, X_i) |\leq \Psi_{1 / \beta_0(t, v_0\T
      X_i)}( \Upsilon_{v_0,v}(t, X_i) )\times (1/\beta_0(t, v_0\T
    X_i))
  \end{align*}
  where $\Psi_a(x) := -\log(1 - a x) / a$ for $a > 0$ and $x <
  1/a$. Since $\Psi_a(x)= -\log(1 - a x) / a \leq x + a x^2$ for any
  $x \in [0, 1 / (2a)]$, we obtain
  \begin{align*}
    | \mathcal L_{v_0, v}(t, X_i) | &\leq \frac{|\Upsilon_{v_0,v}(t,
      X_i)|}{\beta_0(t, v_0\T X_i) \wedge \beta_0(t, v\T X_i)} \Big(1+
    \frac{|\Upsilon_{v_0,v}(t, X_i)|}{\beta_0(t, v_0\T
      X_i)\wedge\beta_0(t, v\T X_i)} \Big) \\
    &\leq \Big( \frac{\epsilon}{b_0} \Big) \Big(1 +
    \frac{\epsilon}{b_0}\Big).
  \end{align*}
  We can write, as a consequence:
  \begin{align*}
    \tilde S_h^i(t) &= \sum_{k \geq 2} \frac{h^k}{k!}  \int_0^t |
    \mathcal L_{v_0, v}(s, X_i) |^k dV_k^i(s) \\
    &\leq \int_0^t | \mathcal L_{v_0, v}(s, X_i) |^2 \beta_0(s, v_0\T
    X_i) Y^i(s) ds \times \sum_{k \geq 2} \frac{h^k}{k!}
    \Big(\frac{\epsilon}{b_0}\Big)^{k-2}
    \Big(1 + \frac{\epsilon}{b_0}\Big)^{k-2} \\
    &\leq \int_0^t | \mathcal L_{v_0, v}(s, X_i) |^2 \beta_0(s, v_0\T
    X_i) Y^i(s) ds \times\frac{h^2}{2}(1 + c_h),
  \end{align*}
  where
  \begin{equation*}
    c_h := 2 \sum_{k \geq 1} \frac{h^k}{(k+2)!}
    \Big(\frac{\epsilon}{b_0} \Big)^{k}
    \Big(1 + \frac{\epsilon}{b_0}\Big)^{k}.
  \end{equation*}
  Note that $c_h \leq 1$ for $h \epsilon$ and $\epsilon$ small
  enough. We obtain:
  \begin{align*} 
    \E_{v_0}^n [ \mathbb L_n(1) ] \leq \E_{v_0} \Big[ \exp \Big( n
    &\Big\{ \int_0^1 | \mathcal L_{v_0, v}(t, X) |^2 \beta_0(t,
    v_0\T X) Y(t) dt \times h^2 (1 + c_h) \\
    &+ 2 h \int_0^1 \Big\{\mathcal L_{v_0, v}(t, X) \beta_0(t,
    v_0^\top X) - \Upsilon_{v_0,v}(t, X) \Big\} Y(t) dt \Big\}
    \\
    &- 2 h \log x \Big) \Big].
  \end{align*}
  Using again the above trick involving the function $\Psi_a$, we
  obtain:
  \begin{align*} 
    \mathcal L_{v_0,v}(t, X_i) \beta_0(t, v_0^\top X_i) -
    \Upsilon_{v_0,v}(t, X_i) \leq \frac{\Upsilon_{v_0,v}(t, X_i)
      ^2}{\beta_0(t, v\T X_i)}\leq \frac{\epsilon^2}{b_0}.
  \end{align*}
  Using the fact that $\log(x/y)^2 x \leq 2 \epsilon^2 / (x \wedge y)$
  for any $x, y > 0$ such that $|x - y| \leq \epsilon \leq (x \wedge
  y) / 2$ and $\epsilon > 0$ small enough [decompose over $\{ x \leq y
  \}$ and $\{ x > y \}$ and use again the previous majoration of
  $\Psi_{a}(x)$], we have in view of~\eqref{eq:close_betas}:
  \begin{equation*}
    \mathcal L_{v_0, v}(t, x)^2 \beta_0(t, v_0\T x) \leq \frac{2
      \epsilon^2}{b_0}
  \end{equation*}
  for any $t \geq 0$ and $x \in [0,1]^d$ and $\epsilon$ small enough.
  In fine, we get, using the fact that $Y^i\leq 1$,
  \begin{align*}
    \E_{v_0}^n [ \mathbb L_n(1) ] &\leq \E_{v_0} \Big[ \exp \Big( n
    \int_0^1 \Big\{ \frac{2 \epsilon^2 h^2}{b_0} + \frac{2 h
      \epsilon^2}{b_0} \Big\} Y^i(t) dt - 2 h \log x \Big) \Big] \\
    &\leq \exp \Big( \frac{2 n \epsilon^2 h}{b_0} (1 + h) - 2 h \log x
    \Big)
  \end{align*}
  for any $h> 0$, so for the choice $h = b_0 \log x / (2 n
  \epsilon^2)$, we obtain
  \begin{align*}
    \P_{v_0}^n [ L_n(v_0, v) \geq x ] &\leq \sqrt{x} \exp \Big(
    -\frac{b_0 (\log x)^2}{2 n \epsilon^2} \Big),
  \end{align*}
  and the conclusion follows, since $\Delta = 1 / \sqrt{n \log n}$ and
  $n \epsilon^2 = d c_0^2 / \log n$.
\end{proof}

\appendix
\normalsize

\section{Appendix}

\subsection{Some tools from approximation theory}
\label{sec:approximation}

Let us give two examples of sieves, that are spanned by localized
basis. In each case, we give the control on $\bar r(A)$ and we give a
standard but useful approximation result below. Note that other
examples of sieves are available, see \cite{bbm99} for instance.

\subsubsection{Piecewise polynomials}
\label{sec:dyadic_polynomials}

Fix $l_1, \ldots, l_{d+1} \in \mathbb N$ and $m_1, \ldots, m_{d+1} \in
\mathbb N$, and define the set $\mathscr R$ of rectangles
$\prod_{i=1}^{d+1} [ (j_i - 1) 2^{-m_i}, j_i 2^{-m_i}[$ for $0 \leq
j_i \leq 2^{m_i}$, $i = 0, \ldots, d+1$. So, $\mathscr R$ is a regular
partition of $[0, 1]^{d+1}$. Take $m = (m_1, \ldots, m_{d+1})$ and
define $A_m$ as the set of functions $f : [0, 1]^{d+1} \rightarrow
\mathbb R$ such that for any $R \in \mathscr R$, the restriction of
$f$ to $R$ coincides with a polynomial of degree not larger than $l_i$
in the $i$th coordinate, for $i = 1, \ldots, d+1$. The dimension of
$A_m$ is then
\begin{equation*}
  D_m := \prod_{i=1}^{d+1} 2^{m_i} (l_i + 1),
\end{equation*}
and using \cite{bbm99}, see Section~3.2.1, we have, since $\mathscr R$
is a regular partition,
\begin{equation*}
  \bar r(A_m) \leq c_{l_1, \ldots, l_{d+1}, d},
\end{equation*}
where $c_{l_1, \ldots, l_{d+1}, d} = (\prod_{i=1}^{d+1} (l_i + 1)(2
l_i + 1))^{1/2}$.

\subsubsection{Wavelets}
\label{sec:wavelets}

Consider a pair $\{ \phi, \psi \}$ of scaling function and wavelet,
where $\psi$ has $K$ vanishing moments. Then $\phi$ and $\psi$ have a
support width of at least $2 K - 1$, and there is a pair with minimal
support, see \cite{daubechies88}. This is the starting point of the
construction of an orthonormal wavelet basis of $\mathbb L^2[0, 1]$,
as proposed in \cite{cohen_daubechies_vial93}.  Roughly, the idea is
to retain the interior scaling functions (those ``far'' from the edges
$0$ and $1$), and to add adapted edge scaling functions, see~Section~4
and Theorem~4.4 in \cite{cohen_daubechies_vial93}. This construction
allows to keep the orthonormality of the system and the number of
vanishing moment unchanged, as well as the number $2^j$ of scaling
function at each resolution $j$. More precisely, if $l$ is such that
$2^l \geq 2 K$, consider for $j \geq l-1$:
\begin{align*}
 \Psi_{j, k} :=
 \begin{cases}
   \psi_{j, k}^0 &\text{ if } j \geq l \text{ and } k = 0, \ldots, K-1 \\
   \psi_{j, k} &\text{ if } j \geq l \text{ and } k = K, \ldots, 2^j
   - K - 1 \\
   \psi_{j, k}^1 &\text{ if } j \geq l \text{ and } k = 2^j - K,
   \ldots, 2^j - 1 \\
   \phi_{l, k}^0 &\text{ if } j = l-1 \text{ and } k = 0, \ldots, K-1 \\
   \phi_{l, k} &\text{ if } j = l-1 \text{ and } k = K, \ldots, 2^l
   - K - 1 \\
   \phi_{l, k}^1 &\text{ if } j = l-1 \text{ and } k = 2^l - K,
   \ldots, 2^l - 1 \\
 \end{cases}
\end{align*}
where $\phi_{j, k} = 2^{j/2} \phi(2^j \cdot - x)$ and $\psi_{j, k} =
2^{j/2} \psi(2^j \cdot - x)$ are the "interior" dilatations and
translations of $\{ \phi, \psi \}$, and $\phi_{j, k}^0, \psi_{j, k}^0,
\phi_{j, k}^1, \psi_{j, k}^1$ are, at each resolution $j$, dilatations
of $2 K$ edge scaling functions and wavelets ($K$ for each edge). We
know from \cite{cohen_daubechies_vial93} that the collection
\begin{equation*}
  W := \{ \Psi_{j,k} : j \geq l - 1, k = 0, \ldots, 2^j - 1 \}
\end{equation*}
is an orthonormal basis of $\mathbb L^2[0, 1]$, and the interior and
edge wavelets have $K$ vanishing moments. Let $W^{(i)}, i = 1, \ldots,
d+1$ be several collections $W$ based on pairs $\{ \phi^{(i)},
\psi^{(i)} \}$ (possibly with different numbers of vanishing
moments). Then, the collection
\begin{align*}
  \{ \otimes_{i=1}^{d+1} \Psi_{j_i, k_i}^{(i)} : j_i \geq l_i - 1, k_i
  = 0, \ldots, 2^{j_i} -1, i = 1, \ldots, d+1 \},
\end{align*}
where $ \otimes_{i=1}^{d+1} \Psi_{j_i, k_i}^{(i)}(x_1, \ldots,
x_{d+1}) = \prod_{i=1}^{d+1} \Psi_{j_i, k_i}^{(i)}(x_i)$, is an
orthonormal basis of $\mathbb L^2 [0, 1]^{d+1}$ that has suitable
approximation properties for a function with an anisotropic
smoothness, see below. Let $m = (m_1, \ldots, m_{d+1}) \in \mathbb
N^{d+1}$ be fixed, where $m_i \geq l_i$ for any $i \in \{ 1, \ldots,
d+1\}$, and define the sieve
\begin{equation}
  \label{eq:wavelet_sieve}
  A_m := \Span \{ \Psi_\lambda : \lambda \in \Lambda(m) \},
\end{equation}
where for $\lambda = (j_1, k_1, \ldots, j_{d+1}, k_{d+1})$,
\begin{equation*}
  \Psi_\lambda := \otimes_{i=1}^{d+1} \Psi_{j_i, k_i}^{(i)},
\end{equation*}
and where
\begin{align*}
  \Lambda(m) = \{ (j_1, k_1, \ldots, j_{d+1}, k_{d+1}) : l_i &- 1 \leq
  j_i \leq m_i, \\
  &k_i = 0, \ldots, 2^{j_i} -1, i = 1, \ldots, d+1 \}
\end{align*}
The dimension of $A_m$ is $\prod_{i=1}^{d+1} D_{m_i}$, where $D_{m_i}
= 2^{m_i} - 2^{l_i} \leq 2^{m_i}$. The control of $r(A_m)$ easily
follows from the fact that if the resolution levels $j_i \geq l_i$ are
fixed for any $i = 1, \ldots, d+1$, the tensor products $\Psi_\lambda$
have disjoint supports, excepted for a finite number of indexes $k_i$,
that depends only on the support of the scaling and mother wavelet
functions used in the construction of $W$. As a consequence, we have
\begin{equation*}
  r(A_m) \leq \frac{1}{\sqrt D_m} \sup_{\beta \neq 0}
  \frac{\norm{\sum_{\lambda \in \Lambda(m)} \beta_\lambda
      \psi_\lambda}_\infty}{|\beta|_\infty} \leq c_\Psi,
\end{equation*}
where $D_m = \prod_{i=1}^{d+1} D_{m_i}$, $|\beta|_\infty =
\sup_{\lambda \in \Lambda(m)} |\beta_\lambda|$ and where $c_\Psi$ is a
constant that depends only the scaling and mother wavelet functions
used in the construction of the basis, and not on the resolution level
$m$.

In the next section, we give the definition of the anisotropic Besov
space, and recall a useful approximation result. The definitions and
results presented here can be found in~\cite{triebel06}, in particular
in Chapter~5 which is about anisotropic spaces.

\subsubsection{Anisotropic Besov space, approximation}

Let $\{ e_1, \ldots, e_{d+1} \}$ be the canonical basis of $\mathbb
R^{d+1}$ and $\bs s = (s_1, \ldots, s_{d+1})$ with $s_i > 0$ be a
vector of directional smoothness, where $s_i$ corresponds to the
smoothness in direction $e_i$. If $k \in \mathbb N$ and $x \in \mathbb
R^{d+1}$, define
\begin{equation*}
  \mathscr D_{e}^{k} := \{ x \in \mathbb R^{d+1} : x + j e \in [0,
  1]^{d+1} \text{ for } j = 0 , \ldots, k \}.
\end{equation*}
If $f : [0, 1]^{d+1} \rightarrow \mathbb R$, we define $\Delta_e^k f$
as the difference of order $k \geq 1$ and step $e \in [0, 1]^{d+1}$,
given by $\Delta_e^1 f(x) = f(x + e) - f(x)$ and $\Delta_e^k f(x) =
\Delta_e^1(\Delta_e^{k-1}f)(x)$ for any $x \in \mathscr D_e^k$. We say
that $f \in \L^2[0, 1]^{d+1}$ belongs to the anisotropic Besov space
$B_{2, \infty}^{\bs s}([0, 1]^{d+1})$ if the semi-norm
\begin{equation}
  \label{eq:besov}
  |f|_{B_{2, \infty}^{\bs s}} := \sup_{t > 0} \Big(
  \sum_{i=1}^{d+1} t^{-s_i} \sup_{h : |h| \leq t} \Big(
  \int_{\mathscr D_{h e_i}^{k_i}} (\Delta_{h e_i}^{k_i} f(x))^2 dx
  \Big)^{1/2} \Big)
\end{equation}
is finite. We know that the norms
\begin{equation*}
  \norm{f}_{B_{2, \infty}^{\bs s}} := \norm{f}_2 + |f|_{B_{2, \infty}^{\bs s}}
\end{equation*}
are equivalent for any choice of $k_i > s_i$. Note that if $\bs s =
(s, \ldots, s)$ for some $s > 0$, then $B_{2, \infty}^{\bs s}$ is the
standard isotropic Besov space. Moreover, the embedding $B_{2, 2}^{\bs
  s} \subset B_{2, \infty}^{\bs s}$ holds. When $s = (s_1, \ldots,
s_{d+1})$ has integer coordinates, $B_{2, 2}^{\bs s}$ is the
anisotropic Sobolev space
\begin{equation*}
  B_{2, 2}^{\bs s} = W_2^{\bs s} = \Big\{ f \in \L^2 : \sum_{i=1}^{d+1}
  \Big\| \frac{\partial^{s_i} f}{\partial x_i^{s_i}} \Big\|_2 < \infty
  \Big\}.
\end{equation*}
If $\bs s$ has non-integer coordinates, then $B_{2, 2}^{\bs s}$ is the
anisotropic Bessel-potential space
\begin{equation*}
  H^{\bs s} = \Big\{ f \in \L^2 : \sum_{i=1}^{d+1} \Big\| (1 +
  |\xi_i|^2)^{s_i/2} \hat f(\xi) \Big\|_2 < \infty \Big\},
\end{equation*}
where $\hat f$ is the Fourier transform of $f$. If $f \in B_{2,
  \infty}^{\bs s}$, we can give a control on the approximation term
$\inf_{\alpha \in A} \norm{\alpha - \alpha_0}$, when $A$ is spanned by
piecewise polynomials or wavelets (see above). Indeed, the next Lemma
is a direct consequence of the Jackson's estimate given in
\cite{hochmuth02a}, together with definition~\eqref{eq:besov} of the
Besov space. Note that this Lemma can be also found in \cite{CGG} and
\cite{lacour07}.
\begin{lemma}
  \label{lem:approximation}
  Assume that $\alpha_0 \in B_{2,\infty}^{\bs s}$ where $\bs s = (s_1,
  \ldots, s_{d+1})$ and let $l_i \geq s_i$ for $i = 1, \ldots,
  d+1$. Let $A_m$ be either\textup:
  \begin{itemize}
  \item the piecewise polynomial sieve \textup(see
    Section~\ref{sec:dyadic_polynomials}\textup) with degrees $l_i$ in
    the $i$th coordinate\textup, based on a partition with rectangles
    of sidelengthes $2^{-m_i},$ or
  \item the wavelet sieve \textup(see
    Section~\ref{sec:wavelets}\textup)\textup, where the wavelets have
    $l_i$ vanishing moments in the $i$th coordinate.
  \end{itemize}
  Then, there is a constant $c = c_{\bs s, d} > 0$ such that
  \begin{eqnarray*}
    \inf_{\alpha \in A_m} \| \alpha - \alpha_0 \|_2 \leq c
    |\alpha_0|_{B_{2,\infty}^{\bs s}} \sum_{i=1}^{d+1} 2^{- s_i m_i}.
 \end{eqnarray*}
\end{lemma}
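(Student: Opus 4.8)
The plan is to reduce the estimate to an anisotropic Jackson inequality and to the very definition~\eqref{eq:besov} of the Besov seminorm. Both families $A_m$ are finite-dimensional \emph{linear} spaces with a tensor-product structure: for the piecewise polynomial sieve of Section~\ref{sec:dyadic_polynomials}, $A_m = \bigotimes_{i=1}^{d+1} V_{m_i}^{(i)}$, where $V_{m_i}^{(i)}$ is the space of piecewise polynomials of degree $\le l_i$ on the dyadic partition of $[0,1]$ with mesh $2^{-m_i}$; for the wavelet sieve~\eqref{eq:wavelet_sieve}, $A_m = \bigotimes_{i=1}^{d+1} W_{m_i}^{(i)}$, where $W_{m_i}^{(i)}$ is the span of the one-dimensional basis of Section~\ref{sec:wavelets} in the $i$th variable truncated at resolution level $m_i$. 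Consequently the $\L^2$-orthogonal projection $\Pi_m$ onto $A_m$ factorizes as $\Pi_m = \Pi_{m_1} \otimes \cdots \otimes \Pi_{m_{d+1}}$, where $\Pi_{m_i}$ is the one-dimensional $\L^2$-projection acting on the $i$th variable, and each $\Pi_{m_i}$ is a contraction on $\L^2$. Since $\Pi_m \alpha_0 \in A_m$, we have $\inf_{\alpha \in A_m} \norm{\alpha - \alpha_0}_2 \le \norm{\alpha_0 - \Pi_m \alpha_0}_2$, so it is enough to bound this last quantity.

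I would then invoke the anisotropic Jackson estimate of~\cite{hochmuth02a}, which applies to both tensor-product spaces above under the hypothesis $l_i \ge s_i$ and gives directly, for a suitable fixed order $k_i > s_i$,
\[
\norm{\alpha_0 - \Pi_m \alpha_0}_2 \le c \sum_{i=1}^{d+1} \sup_{h \,:\, |h| \le 2^{-m_i}} \Big( \int_{\mathscr D_{h e_i}^{k_i}} \big( \Delta_{h e_i}^{k_i} \alpha_0(x) \big)^2 \, dx \Big)^{1/2},
\]
the directional modulus of smoothness on the right-hand side being precisely the one entering the definition~\eqref{eq:besov} of $|\cdot|_{B_{2,\infty}^{\bs s}}$. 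A more hands-on route to the same display uses the telescoping identity $\mathrm{Id} - \Pi_m = \sum_{i=1}^{d+1} \Pi_{m_1} \otimes \cdots \otimes \Pi_{m_{i-1}} \otimes (\mathrm{Id} - \Pi_{m_i}) \otimes \mathrm{Id} \otimes \cdots \otimes \mathrm{Id}$, the $\L^2$-contractivity of each $\Pi_{m_j}$, and a one-dimensional Jackson inequality applied in the $i$th variable; one then has to be a little careful when converting the resulting slicewise modulus into the directional modulus of~\eqref{eq:besov}.

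It then remains to read off from~\eqref{eq:besov} that, taking $t = 2^{-m_i}$ in the supremum over $t > 0$ defining the seminorm,
\[
\sup_{h \,:\, |h| \le 2^{-m_i}} \Big( \int_{\mathscr D_{h e_i}^{k_i}} \big( \Delta_{h e_i}^{k_i} \alpha_0(x) \big)^2 \, dx \Big)^{1/2} \le 2^{-s_i m_i} \, |\alpha_0|_{B_{2,\infty}^{\bs s}}
\]
for each $i = 1, \ldots, d+1$; plugging this into the Jackson bound and using $\inf_{\alpha \in A_m} \norm{\alpha - \alpha_0}_2 \le \norm{\alpha_0 - \Pi_m \alpha_0}_2$ yields the claim with $c = c_{\bs s, d}$. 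The main obstacle is the Jackson step itself, and specifically the anisotropy: one must ensure that each directional piece of $\mathrm{Id} - \Pi_m$ only ``sees'' the smoothness of $\alpha_0$ in its own coordinate --- which is exactly what the tensor-product structure of the two sieves and the $\L^2$-stability of the one-dimensional projections provide --- and that the order $k_i > s_i$ used in direction $i$ stays below the approximation order available there, which is legitimate precisely because $l_i \ge s_i$ guarantees enough polynomial reproduction (resp.\ vanishing moments). The remaining ingredients --- contractivity of orthogonal projections, the independence of the Besov seminorm (up to equivalence) from the choice of $k_i$ (see~\cite{triebel06}, Chapter~5), and the extension of one-dimensional approximation bounds to $[0,1]^{d+1}$ --- are standard.
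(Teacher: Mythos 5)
Your proposal is correct and takes essentially the same route as the paper, whose ``proof'' is a one-line appeal to the Jackson estimate of \cite{hochmuth02a} combined with the definition~\eqref{eq:besov} of the anisotropic seminorm. You fill in the standard mechanism (tensor-product structure of both sieves, $\L^2$-contractivity of the one-dimensional projections, the telescoping decomposition of $\mathrm{Id}-\Pi_m$, and extracting the per-direction bound $\omega_i(\alpha_0,2^{-m_i})_2 \leq 2^{-s_i m_i}|\alpha_0|_{B_{2,\infty}^{\bs s}}$ from the seminorm by bounding each summand by the whole sum at the scale $t=2^{-m_i}$), all of which is precisely what the cited Jackson estimate packages up.
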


\subsection{Some tools from the theory of counting processes and
  stochastic calculus}
\label{sec:appendix_proba}

Let $P_{\alpha_0}$ be the joint law of $\{ (X, N(t), Y(t)) : t \in [0,
1] \}$ when~\eqref{eq:model} holds (the intensity is $\alpha_0$). We
want to explain why the log-likelihood ratio $\ell(\alpha, \alpha_0) :=
\log(d P_\alpha / d P_{\alpha_0})$ writes, when both $\alpha$ and
$\alpha_0$ are assumed to be positive on $[0, 1]^{d+1}$:
\begin{equation}
  \label{eq:log_likeli0}
  \ell(\alpha, \alpha_0) = \int_0^1 \log\Big( \frac{\alpha(t,
    X)}{\alpha_0(t, X)} \Big) d N(t) - \int_0^1 (\alpha(t, X) -
  \alpha_0(t, X)) Y(t) dt.
\end{equation}
This will entail that the log-likelihood ratio $\ell_n(\alpha,
\alpha_0) := \log(d P_\alpha^n / d P_{\alpha_0}^n)$ of the independent
sample~\eqref{eq:whole_sample} satisfies
\begin{equation}
  \label{eq:log_likeli}
  \ell_n(\alpha, \alpha_0) = \sum_{i=1}^n \Big( \int_0^1 \log\Big(
  \frac{\alpha(t, X_i)}{\alpha_0(t, X_i)} \Big) d N^i(t) - \int_0^1
  (\alpha(t, X_i) - \alpha_0(t, X_i)) Y^i(t) dt \Big).
\end{equation}
Equation~\eqref{eq:log_likeli} is useful in several parts of the paper
(dimension reduction and lower bounds).

First, we recall Jacod's formula (see~\cite{ABGK}) for the likelihood of
a counting process. It writes, for the likelihood of $N$:
\begin{align*}
  \prodi_{t \in [0, 1]} \Big\{ ( \alpha_0(t, X) Y(t) )^{\Delta
    N(t)} (1 - \alpha_0(t, X) Y(t) )^{1 - \Delta N(t)} \Big\} dt,
\end{align*}
where $\Delta N(t) = N(t) - N(t_-)$ and where $\prodi$ is the
product-integral, see~\cite{ABGK} for a definition. But $N$ has a
finite number of jumps on $[0, 1]$ and $\Delta N (t) \in \{ 0, 1\}$
for any $t \in [0, 1]$, thus $1 - \Delta N(t) = 1$ for any $t \in [0,
1]$ excepted a finite number of times. Consequently the likelihood of
$N$ reduces to
\begin{align*}
  \prod_{t \in [0, 1]} ( \alpha_0(t, X) Y(t) )^{\Delta N(t)} \exp
  \Big(- \int_0^1 \alpha_0(t, X) Y(t) dt \Big)
\end{align*}
where the first product is actually finite, and where we used the fact
that $\prodi_{t \in [0, 1]} (1 - f(t)) = \exp( -\int_0^1 f(t)
dt)$ for a continuous function $f$ on $[0, 1]$. Thus, the
likelihood ratio $L(\alpha, \alpha_0) = d P_\alpha / d P_{\alpha_0}$
writes
\begin{equation*}
  L(\alpha, \alpha_0) = \prod_{t \in [0, 1]} \Big( \frac{\alpha(t,
    X)}{\alpha_0(t, X)} \Big)^{\Delta N(t)} \exp \Big(- \int_0^1
  ( \alpha(t, X) - \alpha_0(t, X) Y(t)) dt \Big),
\end{equation*}
which entails~\eqref{eq:log_likeli0} since $\sum_{t \in [0, 1]}
f(t) \Delta N^i(t) = \int_0^1 f(t)
dN(t)$. Equation~\eqref{eq:log_likeli} is a consequence
of~\eqref{eq:log_likeli0}, together with the fact since $N^1, \ldots,
N^n$ are independent, they cannot jump at the same time, so that
$\sum_{i=1}^n \Delta N^i(t) \in \{ 0, 1\}$ a.s.

{\small

\bibliographystyle{ims}
\bibliography{biblio}

}

\end{document}